\documentclass[11pt]{article}

\usepackage{amsmath}
\usepackage{amssymb}  
\usepackage{epsfig}
\usepackage{amsthm}   
\usepackage[mathcal]{eucal}
\usepackage{mathrsfs}
\usepackage{url}

\usepackage{color}



%

\usepackage{mathtools}

%

\usepackage{geometry}
\geometry{verbose,tmargin=3cm,bmargin=3cm,lmargin=3cm,rmargin=3cm}

\newcommand{\bigo}{\mathcal{O}}
\newcommand{\R}{\mathbb{R}}

\newcommand{\Abs}[1]{\left|#1\right|}
\newcommand{\abs}[1]{|#1|}
\newcommand{\inner}[2]{\langle{#1},{#2}\rangle}

\newcommand{\norm}[1]{\|#1\|}
\newcommand{\Norm}[1]{\left\|#1\right\|}

\newcommand{\tos}{\rightrightarrows}






\newcommand{\HH}{\mathcal{H}}

\DeclareMathOperator{\dom}{D}


\newcommand{\poubelle}[1]{}

\newtheorem{theorem}{Theorem}[section]
\newtheorem{lemma}[theorem]{Lemma}

\newtheorem{corollary}[theorem]{Corollary}
\newtheorem{proposition}[theorem]{Proposition}
\newtheorem{remark}[theorem]{Remark}


\begin{document}

\title{\textbf{A dynamic approach to a proximal-Newton method for monotone inclusions in Hilbert spaces, with complexity $\bigo(1/n^2)$}}

\author{H. Attouch\thanks{I3M UMR CNRS 5149,
    Universit\'e Montpellier II, Pl. E. Bataillon,
    34095 Montpellier, France
    ({\tt hedy.attouch@univ-montp2.fr})}
 \and
 M. Marques Alves \thanks{Department of Mathematics,
  Federal University of Santa Catarina,
  88.040-900  Florian\'opolis-SC, Brazil ({\tt maicon.alves@ufsc.br})}
 \and
 Benar F. Svaiter\thanks{IMPA, Estrada Dona Castorina 110,
   22460-320 Rio de Janeiro, Brazil ({\tt benar@impa.br})
   tel: 55 (21) 25295112, fax: 55 (21)25124115.  }\hspace{.5em}
 \thanks{Partially supported by CNPq
   grants 474996/2013-1, 302962/2011-5, FAPERJ grant E-26/102.940/2011
   and by PRONEX-Optimization}
}
\maketitle
\begin{abstract}
  In a Hilbert setting, we introduce a new dynamical system and associated algorithms  for solving monotone inclusions by rapid methods.
  Given  a  maximal monotone operator $A$, the evolution is governed by the time dependent operator $I -(I + \lambda(t) {A})^{-1}$, where  the positive control
parameter $\lambda(t)$	tends to infinity as $t \to + \infty$. The  tuning of $ \lambda (\cdot) $ is done in a closed-loop way, by  resolution of the algebraic equation
$\lambda \norm{(I + \lambda {A})^{-1}x -x}=\theta$, where $\theta $ is a positive given constant.
The existence and uniqueness of a  strong  global solution for the Cauchy problem follows from Cauchy-Lipschitz theorem. We prove the weak convergence of the trajectories to equilibria, and superlinear convergence under an error bound condition. When $A =\partial f$ is the  subdifferential of a closed convex  function $f$,
we show  a $\bigo(1/t^2)$ convergence property of $f(x(t))$ to the infimal value of the problem.
Then, we introduce proximal-like algorithms which can be  obtained  by time
discretization of the continuous dynamic, and which share the same fast convergence properties.  As distinctive features, we allow a relative error tolerance for the solution of the proximal
subproblem similar to the ones proposed in ~\cite{So-Sv1, So-Sv2}, and a large step condition, as proposed in~\cite{MS1,MS2}.
For general convex minimization problems, the complexity  is $\bigo(1/n^2)$. In the regular case,  we show the global quadratic convergence of an associated proximal-Newton method.

\paragraph{\textbf{Key words}:}  complexity;  convex minimization;   fast convergent methods; 
 large step condition; monotone inclusions; Newton method;  proximal algorithms;  relative error; subdifferential operators; weak asymptotic convergence. 
\vspace{0.5cm}

\paragraph{\textbf{AMS subject classification 2010}} \ 34A12, 34A60,  34G25, 37C65, 37L05,47H05,  47J25, 47J30,
47J35,	47N10,	49J55,	49M15, 49M25, 49M37, 65K05, 65K10, 65K15, 65Y20, 90C25, 90C52, 90C53.

\end{abstract}

 \bigskip

 \bigskip

\section*{Introduction}
Let $ \mathcal H $ be a real Hilbert space, and ${A}: \mathcal H \tos \mathcal H$  be a maximal
monotone operator.
The space $\mathcal H$ is endowed with the scalar product
$\left\langle . , . \right\rangle $, with  \  $\|x\|^2 =
\left\langle x , x \right\rangle $  for any $x\in \mathcal H$.
Our goal is to develop new continuous and discrete dynamics, with properties of fast convergence, designed to solve the equation
\begin{equation}\label{eq:basic1}
\mbox{find} \  x\in \mathcal H \mbox{ such that }  \ 0 \in A x.
\end{equation}
 We start from the classical method, which consists in formulating \eqref{eq:basic1} as a fixed point problem:
\begin{equation}\label{eq:basic2}
\mbox{find} \  x\in \mathcal H \mbox{ such that }  \ x -  \left(I + \lambda {A}\right)^{-1}x =0,
\end{equation}
where $\lambda >0$ is a positive parameter, and $\left(I + \lambda {A}\right)^{-1}$ is the resolvent of index $\lambda$ of $A$
(recall that the resolvents are non expansive mappings from $\mathcal H$ into $\mathcal H$). Playing on the freedom of choice of the parameter $\lambda >0$, we are led to consider the  evolution problem:
\begin{equation}\label{eq:basic3}
\dot x(t) + x(t) - (I + \lambda(t) {A})^{-1}x(t)=0.
\end{equation}
When $\lambda(\cdot)$ is locally absolutely continuous, this differential equation falls within Cauchy-Lipschitz theorem.
Then, the strategy is to choose a control variable $t\mapsto \lambda (t)$ which gives good properties of asymptotic convergence of  \eqref{eq:basic3}.
In standard methods for solving monotone inclusions, the parameter $\lambda(t)$  ($\lambda_k $ in the discrete algorithmic case) is prescribed to stay bounded away from zero and infinity.
By contrast, our strategy is to let $\lambda(t)$  tend to $+ \infty$ as $t\to +\infty$. This will be a crucial ingredient for obtaining fast convergence properties.
But the precise tuning of $\lambda (\cdot)$ in such an open-loop way is a difficult task, and the open-loop approach raises numerical difficulties.
Instead, we  consider the following  system  \eqref{eq:edomm} with variables $(x, \lambda)$, where the tuning is done in a closed-loop way via the second equation of \eqref{eq:edomm}
($\theta$ is a fixed positive parameter):
\begin{equation}\label{eq:edomm}
{\rm (LSP)} \ \left\{
\begin{array}{l}
\dot x(t) + x(t) -(I + \lambda(t) {A})^{-1}x(t) =0,\qquad \lambda (t)>0,\qquad	   \\
\rule{0pt}{20pt}
\lambda(t)\norm{(I + \lambda(t) {A})^{-1}x(t)-x(t)}=\theta.\\
\end{array}\right.
\end{equation}
Note that $\lambda (\cdot)$ is an unknown function, which is obtained by solving this system. When the system is asymptotically stabilized, i.e., $\dot x(t) \to 0$, then the second equation of \eqref{eq:edomm} forces
$\lambda(t) =\frac{\theta}{\| \dot x(t) \|}$ to  tend to $+ \infty$ as $t\to +\infty$. 
Our main results can be summarized as follows:

In Theorem \ref{th:main.1}, we  show that, for any given  $x_0\in \mathcal H \setminus
{A}^{-1}(0)$, and $\theta>0$, there exists a unique strong (locally Lipschitz in time) global solution $t \mapsto (x(t), \lambda (t))$	of \eqref{eq:edomm} which satisfies the Cauchy data
$x(0)= x_0$.

 In Theorem \ref{th:main.2}, we study the asymptotic behaviour of the orbits of \eqref{eq:edomm}, as $t\rightarrow + \infty$.
Assuming  $A^{-1} (0) \neq \emptyset$, we show that	for any orbit $t
  \mapsto (\lambda(t),x(t))$ of  \eqref{eq:edomm}, $\lambda (t)$ tends increasingly to $+\infty$, and $w-\lim_{t \rightarrow +\infty} x(t) = x_{\infty}$
    exists, for some $x_{\infty} \in A^{-1} (0)$.  We complete these results by showing  in Theorem \ref{thm-strg-conv} the strong convergence of the trajectories under certain additional properties, and in Theorem \ref{thm-error-bound}  superlinear convergence under an error bound assumption.
    
 In  Theorem \ref{thm-reg-Newton}, we show 
   that  \eqref{eq:edomm} has a natural link with the regularized Newton dynamic, which was introduced in \cite{AS}.
In fact, $\lambda(t)$  tends to $+ \infty$ as $t\to +\infty$ is equivalent to  the convergence to zero of the coefficient of the regularization term (Levenberg-Marquardt type) in the regularized Newton dynamic. 
  Thus \eqref{eq:edomm}  is likely to share some of the nice convergence properties of the Newton method.
  
In Theorem   \ref{th:complexity}, when $A =\partial f$ is the  subdifferential of a convex lower semicontinuous proper function $f: \mathcal H \to \mathbb R \cup \left\{+\infty\right\}$, we show the $\bigo(1/t^2)$ convergence property
\begin{align*}
  f(x(t))-\inf_{\mathcal H} f\leq \frac{C_1}{(1+ C_2t)^2}.
\end{align*}
In Appendix~\ref{sec:ex} we consider some situations where an explicit computation of the
continuous orbits can be made, and so confirm the theoretical results.

Then, we present new algorithms which can be obtained by time
discretization of \eqref{eq:edomm}, and which share similar fast
convergence properties.  We study the iteration complexity of a
variant of the proximal point method for optimization. Its
main distinctive features are:

i) a relative error tolerance for the solution of the proximal
subproblem similar to the ones proposed in ~\cite{So-Sv1, So-Sv2}, see
also \cite{ABS} in the context of semi-algebraic and
tame optimization;

ii) a large step condition, as proposed in~\cite{MS1,MS2}. Let us notice
that the usefulness of letting the parameter $\lambda_k$ tends to
infinity in the case of the proximal algorithm, was already noticed by
Rockafellar in \cite{Rock} (in the case of a strongly monotone operator,
he showed a superlinear convergence property).

Cubic-regularized Newton method was first proposed
  in~\cite{GriewankCubic} and, after that, in~\cite{MR2319241}.
As a main result, in Theorem \ref{th:e0} we show that the complexity of our
method is $\bigo(1/n^2)$, the same as the one of the
cubic-regularized Newton method~\cite{NP}.

For  smooth convex optimization we introduce a corresponding proximal-Newton method, which has rapid global convergence properties (Theorem \ref{th:main}), and has quadratic convergence in the regular case (Theorem \ref{quad-conv-thm}).


\section{Study of the algebraic relationship linking $\lambda$ and
  \textit{x} }
Let us fix  $\theta >0$ a positive parameter.	 
We start by analyzing the algebraic relationship
 \begin{equation}
\lambda \norm{(I + \lambda {A})^{-1}x -x}=\theta,
 \end{equation}
that links  variables $\lambda \in ]0,+\infty[$ and $x\in \mathcal H$ in the second equation of  \eqref{eq:edomm}.
Define
\begin{align}
  \label{eq:psi}
  \varphi: \  [0,\infty[\times \mathcal H \to\R^+,\qquad
  \varphi(\lambda,x)=\lambda\norm{x-(I + \lambda {A})^{-1}x} \  \ \mbox{for} \ \lambda >0, \ \  \varphi(0,x)=0.
\end{align}
We denote by $J_{\lambda}^{A} = (I+ \lambda A)^{-1}$ the resolvent of
index $\lambda > 0$ of $A$, and by $ A_{\lambda} = \frac{1}{\lambda}
\left(I - J_{\lambda}^{A}\right)$ its Yosida approximation of index
$\lambda > 0$.	
To analyze the dependence of $\varphi$ with respect to
$\lambda$ and $x$, we recall some classical facts concerning resolvents
of maximal monotone operators.
\begin{proposition}
\label{resolv. cont}
  For any $\lambda >0$, $\mu >0$, and any $x\in \mathcal H$, the following properties hold:
\begin{align}\label{resolv-x-lip}
& i) \	J_{\lambda}^{A}: \mathcal H \to \mathcal H \ \mbox{is nonexpansive, and } \ A_{\lambda}: \mathcal H \to \mathcal H \  \mbox{is} \ \frac{1}{\lambda}- \mbox{Lipschitz continuous}.\\
\label{resoleq}
 & ii) \ J_{\lambda}^{A} x =   J_{\mu}^{A} \left( \frac{\mu}{\lambda} x  + \left(1 -  \frac{\mu}{\lambda} \right) J_{\lambda}^{A}x \right);\\
\label{local.lip1}
 & iii) \  \| J_{\lambda}^{A}x	- J_{\mu}^{A}x \|  \leq  |\lambda - \mu|  \  \|A_{\lambda} x \|; \\
 \label{resolv-asympt1}
 & iv) \  \lim_{\lambda \to 0}J_{\lambda}^{A}x = \mbox{{\rm proj}}_{\overline{\dom (A)}}x;\\
 \label{resolv-asympt2}
 & v) \  \lim_{\lambda \to + \infty }J_{\lambda}^{A}x = \mbox{{\rm proj}}_{A^{-1} (0)}x,     \quad  \mbox{if} \  A^{-1} (0) \neq \emptyset.
\end{align}
\noindent As a consequence, for any $x\in \mathcal H$ and any $0< \delta < \Lambda <+\infty$, the function $\lambda \mapsto  J_{\lambda}^{A}x$ is Lipschitz continuous on  $\left[\delta, \Lambda  \right]$.
 More precisely, for any
 $\lambda, \mu $ belonging to $\left[\delta, \Lambda  \right]$
\begin{equation}
\label{local.lip2}
\| J_{\lambda}^{A}x  - J_{\mu}^{A}x \| \leq  |\lambda - \mu| \	\|A_{\delta} x \|.
\end{equation}
\end{proposition}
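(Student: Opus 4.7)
The plan is to establish the five items in the order given, since each can be derived from monotonicity and the defining relation $u = J_\lambda^A x \iff x - u \in \lambda A u$, and then combine (iii) with the monotonicity of $\lambda \mapsto \|A_\lambda x\|$ to obtain the final Lipschitz statement on $[\delta,\Lambda]$.

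For (i), I would set $u=J_\lambda^A x$, $v=J_\lambda^A y$, write $(x-u)/\lambda \in Au$ and $(y-v)/\lambda \in Av$, and invoke monotonicity of $A$ to get $\langle x-y,u-v\rangle \geq \|u-v\|^2$. Cauchy--Schwarz then yields nonexpansiveness of $J_\lambda^A$, and the identity $\lambda A_\lambda = I - J_\lambda^A$ (together with firm nonexpansiveness, or simply the triangle inequality) gives the $1/\lambda$-Lipschitz property of $A_\lambda$. For (ii), the idea is to observe that $u=J_\lambda^A x$ is characterized by the inclusion $\frac{x-u}{\lambda} \in Au$; rewriting this as $\frac{y-u}{\mu}\in Au$ with $y$ chosen so that $\mu \frac{x-u}{\lambda}=y-u$, i.e., $y=\frac{\mu}{\lambda}x+\bigl(1-\frac{\mu}{\lambda}\bigr)u$, gives the identity. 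Item (iii) follows at once from (ii) by applying nonexpansiveness of $J_\mu^A$ to the points $x$ and $y$: the distance is $\bigl|1-\mu/\lambda\bigr|\,\|x-J_\lambda^A x\|=|\lambda-\mu|\,\|A_\lambda x\|$.

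For (iv) and (v), I would use the standard Minty-type arguments. For (iv), one shows that $J_\lambda^A x$ stays bounded as $\lambda \to 0$, that any weak cluster point lies in $\overline{\dom A}$, and that $\langle x-J_\lambda^A x, J_\lambda^A x - z\rangle \geq 0$ for every $z \in \dom A$ (from monotonicity with an arbitrary element of $Az$ after multiplying through by $\lambda$ and passing to the limit), which characterizes the projection onto $\overline{\dom A}$. For (v), assuming $A^{-1}(0)\neq\emptyset$, I would test against any $z\in A^{-1}(0)$: since $0\in Az$ and $(x-J_\lambda^A x)/\lambda \in A J_\lambda^A x$, monotonicity gives $\langle x-J_\lambda^A x, J_\lambda^A x - z\rangle\geq 0$, so $\|J_\lambda^A x - z\|\le \|x-z\|$ and in particular the trajectory stays bounded and $\|x-J_\lambda^A x\|$ is bounded, so $A_\lambda x \to 0$. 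Any weak cluster point $\bar x$ of $J_\lambda^A x$ therefore lies in $A^{-1}(0)$ (by the demiclosedness of the graph of $A$), and the variational inequality $\langle x - \bar x, \bar x - z\rangle \geq 0$ for all $z\in A^{-1}(0)$ identifies $\bar x$ with $\mathrm{proj}_{A^{-1}(0)}x$, whence the full limit.

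Finally, for the consequence \eqref{local.lip2}, I would use the classical fact that $\lambda \mapsto \|A_\lambda x\|$ is nonincreasing on $]0,+\infty[$ (an immediate consequence of the resolvent identity (ii) together with nonexpansiveness of $J_\mu^A$). Combined with (iii), this yields, for every $\lambda,\mu\in[\delta,\Lambda]$,
\begin{equation*}
\|J_\lambda^A x - J_\mu^A x\| \leq |\lambda - \mu|\,\|A_{\min(\lambda,\mu)}x\| \leq |\lambda-\mu|\,\|A_\delta x\|,
\end{equation*}
which is exactly the desired Lipschitz estimate. The main obstacles are the asymptotic items (iv)--(v), which are the only ones requiring a limiting argument rather than a direct manipulation of the defining inclusion; everything else is an algebraic rearrangement built on monotonicity.
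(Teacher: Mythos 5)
Your proof of items (ii), (iii) and of the final estimate \eqref{local.lip2} is essentially identical to the paper's: the resolvent equation from the defining inclusion, nonexpansiveness of $J_\mu^A$ applied to (ii) to get (iii), and monotonicity of $\lambda\mapsto\|A_\lambda x\|$ to pass from (iii) to \eqref{local.lip2}. Where you diverge is items (i), (iv) and (v): the paper does not actually prove these, but simply cites Br\'ezis (Propositions 2.2 and 2.6, Theorem 2.2) for (i) and (iv), and the Tikhonov viscosity-selection result of Attouch for (v), whereas you supply direct Minty-type arguments. Those arguments are sound, but as written they gloss over one point worth flagging: when you ``pass to the limit'' in $\langle x-J_\lambda^A x,\;J_\lambda^A x - z\rangle$ along a weakly convergent subnet $J_{\lambda_n}^A x\rightharpoonup \bar x$, the inner product does not converge, because it contains the term $-\|J_{\lambda_n}^A x\|^2$; what saves the argument is that $\|\cdot\|^2$ is weakly lower semicontinuous, so this term is weakly \emph{upper} semicontinuous with the right sign, and one gets $\langle x-\bar x,\bar x-z\rangle\geq\limsup_n\langle x-J_{\lambda_n}^Ax,J_{\lambda_n}^Ax-z\rangle\geq 0$. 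A further remark: this delivers uniqueness of weak cluster points and hence weak convergence of the whole net; to recover the \emph{strong} convergence that Br\'ezis' theorem actually asserts in (iv), one also needs the companion estimate $\limsup_{\lambda\to 0}\|x-J_\lambda^A x\|\leq \mathrm{dist}(x,\overline{\dom A})$ (obtained from the same monotonicity inequality and density of $\dom A$) so that weak convergence upgrades to strong. None of this is a flaw in your overall route, but it is the kind of detail that the paper sidesteps by citing the literature, so if you intend your proof to be self-contained you should spell it out.
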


\begin{proof} $i)$ is a classical result, see \cite[Proposition 2.2, 2.6]{Br}.\\
$ii)$ \ Equality (\ref{resoleq}) is known as the resolvent equation, see \cite{Br}. Its proof is  straightforward:
 By definition of $\xi = J_{\lambda}^{A}x$, we have
\begin{center}
 $\xi + \lambda A\xi \ni x$,
\end{center}
which, after multiplication by $\frac{\mu}{\lambda}$, gives
\begin{center}
 $\frac{\mu}{\lambda} \xi + \mu  A\xi \ni \frac{\mu}{\lambda} x$.
\end{center}
By adding $\xi$ to the two members of the above equality, we obtain
\begin{center}
 $\xi + \mu  A\xi \ni \frac{\mu}{\lambda} x - \frac{\mu}{\lambda} \xi +\xi$,
\end{center}
which gives the desired equality
\begin{center}
 $\xi	 = J_{\mu}^{A} \left( \frac{\mu}{\lambda} x  + \left(1 -  \frac{\mu}{\lambda} \right) J_{\lambda}^{A}x \right)$.
\end{center}
$iii)$ \ For any $\lambda >0$, $\mu >0$, and any $x\in \mathcal H$, by using successively the resolvent equation and the nonexpansive property of the resolvents, we have
\begin{align*}
\| J_{\lambda}^{A}x  - J_{\mu}^{A}x \| & = \| J_{\mu}^{A} \left( \frac{\mu}{\lambda} x	+ \left(1 -  \frac{\mu}{\lambda} \right) J_{\lambda}^{A}x \right)  - J_{\mu}^{A}x \| \\
& \leq	\| \left(1 -  \frac{\mu}{\lambda} \right)  \left(x - J_{\lambda}^{A}x \right) \| \\
& \leq |\lambda - \mu| \ \|A_{\lambda} x \|.
\end{align*}
\noindent Using that $\lambda \mapsto \|A_{\lambda} x \|$ is nonincreasing, (see \cite[Proposition 2.6]{Br}), we  obtain (\ref{local.lip2}).\\
$iv)$ see \cite[Theorem 2.2]{Br}.\\
$v)$ \ It is the viscosity selection property of the Tikhonov approximation, see \cite{Att-visc}.
\end{proof}
Let us first consider the mapping $x \mapsto \varphi (\lambda, x)$. Noticing that, for $\lambda>0$, $\varphi(\lambda,x)= {\lambda}^2 \norm{A_{\lambda}x}$,
the following result is just the reformulation in terms of $\varphi$ of the $\frac{1}{\lambda}$-Lipschitz continuity of $A_{\lambda}$.
\begin{proposition}
  \label{pr:psi0}
  For any $x_1,x_2\in {\mathcal H}$ and $\lambda>0$,
    \begin{align*}
      \Abs{\varphi(\lambda,x_1)-\varphi(\lambda,x_2)}\leq\lambda\norm{x_2-x_1}.
  \end{align*}
\end{proposition}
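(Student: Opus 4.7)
The plan is to reduce the claim to the $1/\lambda$-Lipschitz continuity of the Yosida approximation, which has already been recorded as part of Proposition~\ref{resolv. cont}(i). The key observation, which the author has just flagged, is the identity
\begin{equation*}
\varphi(\lambda,x)=\lambda\,\norm{x-J_\lambda^{A}x}=\lambda^2\,\norm{A_\lambda x},
\end{equation*}
valid for every $\lambda>0$ and every $x\in\mathcal H$, obtained directly from the definition $A_\lambda=\frac{1}{\lambda}(I-J_\lambda^{A})$.

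With this rewriting in hand, I would estimate $|\varphi(\lambda,x_1)-\varphi(\lambda,x_2)|$ by pulling out the factor $\lambda^2$ and then applying the reverse triangle inequality to norms:
\begin{equation*}
\Abs{\varphi(\lambda,x_1)-\varphi(\lambda,x_2)}
=\lambda^2\,\Abs{\norm{A_\lambda x_1}-\norm{A_\lambda x_2}}
\leq \lambda^2\,\norm{A_\lambda x_1-A_\lambda x_2}.
\end{equation*}
The final step is to invoke part~(i) of Proposition~\ref{resolv. cont}, which tells us that $A_\lambda$ is $\tfrac{1}{\lambda}$-Lipschitz continuous, so $\norm{A_\lambda x_1-A_\lambda x_2}\leq \tfrac{1}{\lambda}\norm{x_1-x_2}$. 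Substituting this bound gives exactly $\lambda\norm{x_2-x_1}$, which is the claimed inequality.

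There is essentially no obstacle here: the only small subtlety is making sure to first reduce to a difference of norms (via the reverse triangle inequality) before quoting the Lipschitz estimate on the Yosida approximation itself, since $\varphi$ involves $\norm{A_\lambda x}$ rather than $A_\lambda x$. Everything else is immediate once the identity $\varphi(\lambda,x)=\lambda^2\norm{A_\lambda x}$ is written down.
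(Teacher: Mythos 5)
Your proof is correct and follows exactly the route the paper indicates: rewrite $\varphi(\lambda,x)=\lambda^2\norm{A_\lambda x}$, apply the reverse triangle inequality, and invoke the $\tfrac{1}{\lambda}$-Lipschitz continuity of $A_\lambda$ from Proposition~\ref{resolv. cont}(i). The paper compresses this into a one-line remark; you have merely spelled out the reverse-triangle-inequality step that the paper leaves implicit.
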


The next result was proved in~\cite[Lemma 4.3]{MS2} for finite
dimensional spaces. Its proof for arbitrary Hilbert spaces is similar and
is provided for the sake of completeness.

\begin{lemma}
  \label{lm:psi1}
  For any $x\in {\mathcal H}$ and $0<{\lambda_1}\leq{\lambda_2}$,
  \begin{align}
    \label{eq:psi1}
    \dfrac{{\lambda_2}}{{\lambda_1}}\,\varphi({\lambda_1},x)\leq\varphi({\lambda_2},x)\leq
    \left(\dfrac{{\lambda_2}}{{\lambda_1}}\right)^{2}\varphi({\lambda_1},x)
 \end{align}
 and $\varphi({\lambda_1},x)=0$ if and only if $0\in {A}(x)$.
\end{lemma}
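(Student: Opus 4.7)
The plan is to reduce everything to the identity $\varphi(\lambda,x)=\lambda^{2}\|A_{\lambda}x\|$, so that the two inequalities in \eqref{eq:psi1} become, after dividing by $\lambda_{2}^{2}$,
\begin{align*}
\lambda_{1}\|A_{\lambda_{1}}x\|\;\leq\;\lambda_{2}\|A_{\lambda_{2}}x\|
\qquad\text{and}\qquad
\|A_{\lambda_{2}}x\|\;\leq\;\|A_{\lambda_{1}}x\|.
\end{align*}
In other words, $\lambda\mapsto\|A_{\lambda}x\|$ should be nonincreasing while $\lambda\mapsto\lambda\|A_{\lambda}x\|=\|x-J_{\lambda}^{A}x\|$ should be nondecreasing. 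Both are classical facts about Yosida approximations, and I will deduce them in a unified way from the monotonicity of $A$.

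The key step is to exploit the inclusion $A_{\lambda_{i}}x\in A(J_{\lambda_{i}}^{A}x)$ for $i=1,2$, together with the identity $J_{\lambda}^{A}x=x-\lambda A_{\lambda}x$. Monotonicity of $A$ yields
\begin{align*}
\bigl\langle A_{\lambda_{1}}x-A_{\lambda_{2}}x,\;J_{\lambda_{1}}^{A}x-J_{\lambda_{2}}^{A}x\bigr\rangle
=\bigl\langle A_{\lambda_{1}}x-A_{\lambda_{2}}x,\;\lambda_{2}A_{\lambda_{2}}x-\lambda_{1}A_{\lambda_{1}}x\bigr\rangle\geq 0.
\end{align*}
Setting $a=\|A_{\lambda_{1}}x\|$ and $b=\|A_{\lambda_{2}}x\|$, expanding the inner product and bounding the cross term by Cauchy–Schwarz gives
\begin{align*}
(\lambda_{1}+\lambda_{2})\,ab \;\geq\; \lambda_{1}a^{2}+\lambda_{2}b^{2},
\quad\text{i.e.,}\quad
(\lambda_{1}a-\lambda_{2}b)(a-b)\;\leq\;0.
\end{align*}

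From $0<\lambda_{1}\leq\lambda_{2}$ I will then argue by a short case analysis: if $a<b$ then $\lambda_{1}a<\lambda_{2}b$, contradicting the above, so $a\geq b$; and if $\lambda_{1}a>\lambda_{2}b$, then $a>b$ would force $(\lambda_{1}a-\lambda_{2}b)(a-b)>0$, again a contradiction, so $\lambda_{1}a\leq\lambda_{2}b$. This gives both desired inequalities simultaneously, and hence \eqref{eq:psi1}. The only mildly delicate point — which I see as the main obstacle — is spotting the factorisation $(\lambda_{1}a-\lambda_{2}b)(a-b)\le0$; everything else is routine.

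For the characterisation, I will note that $\varphi(\lambda_{1},x)=0$ iff $A_{\lambda_{1}}x=0$ iff $J_{\lambda_{1}}^{A}x=x$; by the defining inclusion $x\in x+\lambda_{1}A(x)$, this is equivalent to $0\in A(x)$.
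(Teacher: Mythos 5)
Your proof is correct, and it takes a genuinely different route from the paper's. The paper writes $y_i=J_{\lambda_i}^A x$, $v_i=A_{\lambda_i}x$, manipulates the two defining relations $\lambda_i v_i+y_i-x=0$ into \emph{two} separate vector identities, and then applies monotonicity of $A$ \emph{twice} — once pairing with $v_1-v_2$ to get $\inner{v_1-v_2}{v_2}\ge 0$ (hence $\norm{v_1}\ge\norm{v_2}$, the second inequality in \eqref{eq:psi1}), and once pairing with $y_2-y_1$ to get $\inner{y_2-y_1}{y_1-x}\ge 0$ (hence $\norm{y_2-x}\ge\norm{y_1-x}$, the first inequality). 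You instead apply monotonicity \emph{once}, in the form $\inner{A_{\lambda_1}x-A_{\lambda_2}x}{J_{\lambda_1}^Ax-J_{\lambda_2}^Ax}\geq 0$, expand using $J_\lambda^A x=x-\lambda A_\lambda x$, apply Cauchy--Schwarz, and arrive at the single scalar inequality $(\lambda_1 a-\lambda_2 b)(a-b)\le 0$ (with $a=\norm{A_{\lambda_1}x}$, $b=\norm{A_{\lambda_2}x}$), from which both monotonicity statements fall out together via a short case analysis. Your route is more unified — one monotonicity call, one nice factorisation — at the modest cost of a case analysis and an extra Cauchy--Schwarz; the paper's route is slightly longer to set up but each inequality drops out directly without discussing cases. (Note, incidentally, that the paper also uses Cauchy--Schwarz implicitly in passing from $\inner{v_1-v_2}{v_2}\ge 0$ to $\norm{v_1}\ge\norm{v_2}$, so the two approaches are closer in spirit than they first appear.) Your case analysis could be streamlined: having shown $a\ge b$, observe directly that if $a>b$ then the factored inequality forces $\lambda_1 a\le\lambda_2 b$, while if $a=b$ then $\lambda_1 a\le\lambda_2 a=\lambda_2 b$ trivially. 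The treatment of the final equivalence $\varphi(\lambda_1,x)=0\iff 0\in A(x)$ is fine and matches what the paper does.
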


\begin{proof}
  Let $y_i=J_{\lambda_i}^Ax$ and $v_i=A_{\lambda_i}x$ for $i=1,2$.
  In view of these definitions,
  \begin{align*}
    v_i\in A(y_i),\qquad \lambda_iv_i+y_i-x=0\qquad i=1,2.
  \end{align*}
  Therefore,
  \begin{align*}
    {\lambda_1}({v_1}-{v_2})+{y_1}-{y_2}=({\lambda_2}-{\lambda_1}) {v_2},
    \qquad
    {v_2}-{v_1}+{\lambda_2}^{-1}({y_2}-{y_1})
    =({\lambda_1}^{-1}-{\lambda_2}^{-1})({y_1}-x).
 \end{align*}
  Since ${A}$ is monotone, the inner products of both sides of the first
  equation by ${v_1}-{v_2}$ and of the second equation by ${y_2}-{y_1}$ are
  non-negative. Since ${\lambda_1}\leq{\lambda_2}$,
  \begin{equation*}
    \inner{{v_1}-{v_2}}{{v_2}}\geq 0,\quad \inner{{y_2}-{y_1}}{{y_1}-x}\geq 0,
    \quad  \norm{{v_1}}\geq\norm{{v_2}},\quad \norm{{y_2}-x}\geq\norm{{y_1}-x}.
  \end{equation*}
  The two inequalities in \eqref{eq:psi1} follow from the two last inequalities
  in the above equation and definition \eqref{eq:psi}.
  The last part of the proposition follows trivially from the maximal
  monotonicity of ${A}$ and definition \eqref{eq:psi}.
\end{proof}

We can now analyze the properties of the mapping $\lambda \mapsto
\varphi (\lambda, x)$. Without ambiguity, we write shortly $J_{\lambda}$
for the resolvent  of
index $\lambda > 0$ of $A$.
\begin{proposition}
  \label{pr:phi-property}
  For any $x\notin A^{-1} (0)$,  the function
   $\lambda \in [0,\infty[ \  \mapsto \varphi (\lambda, x) \in \R^+$ is continuous, strictly increasing, $\varphi(0,x)=0$, and $\lim_{\lambda \to + \infty}\varphi (\lambda, x) =+ \infty$.
\end{proposition}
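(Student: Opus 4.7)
The plan is to derive all four assertions directly from Lemma~\ref{lm:psi1}, using the definition $\varphi(0,x)=0$ only to handle the endpoint $\lambda=0$. Because $x\notin A^{-1}(0)$, the last part of Lemma~\ref{lm:psi1} guarantees that $\varphi(\lambda,x)>0$ for \emph{every} $\lambda>0$, which is what makes the two-sided bound \eqref{eq:psi1} into a genuine squeeze.

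For strict monotonicity on $]0,\infty[$, take $0<\lambda_1<\lambda_2$ and apply the left inequality in \eqref{eq:psi1}:
\begin{equation*}
\varphi(\lambda_2,x)\ \ge\ \frac{\lambda_2}{\lambda_1}\,\varphi(\lambda_1,x)\ >\ \varphi(\lambda_1,x),
\end{equation*}
where the strict inequality uses $\varphi(\lambda_1,x)>0$. The case $\lambda_1=0<\lambda_2$ is immediate from $\varphi(0,x)=0<\varphi(\lambda_2,x)$.

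For continuity at a point $\lambda_0>0$, I would apply \eqref{eq:psi1} with $\lambda_1=\min(\lambda,\lambda_0)$ and $\lambda_2=\max(\lambda,\lambda_0)$: both bounds in \eqref{eq:psi1} tend to $\varphi(\lambda_0,x)$ as $\lambda\to\lambda_0$, giving the desired squeeze. For continuity at $0$, fix any $\lambda_0>0$ and rewrite the left inequality of \eqref{eq:psi1} as $\varphi(\lambda,x)\le(\lambda/\lambda_0)\varphi(\lambda_0,x)$ for $0<\lambda\le\lambda_0$, which drives $\varphi(\lambda,x)\to 0=\varphi(0,x)$ as $\lambda\to 0^+$.

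Finally, for the limit at infinity, fix any $\lambda_1>0$; again using $\varphi(\lambda_1,x)>0$ together with the left inequality of \eqref{eq:psi1},
\begin{equation*}
\varphi(\lambda_2,x)\ \ge\ \frac{\lambda_2}{\lambda_1}\,\varphi(\lambda_1,x)\ \longrightarrow\ +\infty\qquad\text{as }\lambda_2\to+\infty.
\end{equation*}
No step here looks genuinely hard; the only mild subtlety is the one-sided continuity at $\lambda=0$, and this is handled cleanly by the linear (lower-order) side of \eqref{eq:psi1}. The whole proposition is essentially a repackaging of Lemma~\ref{lm:psi1} combined with the fact that $x\notin A^{-1}(0)$ excludes the only obstruction to strict positivity of $\varphi(\cdot,x)$.
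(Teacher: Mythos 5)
Your proposal is correct and follows essentially the same route as the paper: everything is extracted from the two-sided bound \eqref{eq:psi1} together with the positivity of $\varphi(\cdot,x)$ on $]0,\infty[$ guaranteed by the last part of Lemma~\ref{lm:psi1} (because $0\notin A(x)$). Your squeeze argument at an interior point $\lambda_0>0$ is precisely what the paper means by ``left-continuity and right-continuity follow from the first and second inequality in \eqref{eq:psi1},'' just written out explicitly, and your continuity argument at $0$ is the paper's with an arbitrary $\lambda_0$ in place of the fixed $\lambda_2=1$.
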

\begin{proof} 
 It follows from \eqref{eq:psi}
    and the first inequality in \eqref{eq:psi1} with $\lambda_2=1$, 
    $\lambda=\lambda_1\leq1$ that
  \[
  0\leq \lim\sup_{\lambda\to0^+}\varphi(\lambda,x)\leq\lim_{\lambda\to0^+}
  \lambda\varphi(1,x)=0,
  \]
  which proves continuity of $\lambda \mapsto \varphi(\lambda,x)$ at $\lambda=0$.
  Note that this also results from Proposition \ref{resolv. cont} $iv)$.
  Since $0\notin A(x)$, it follows from
  the last statement in Lemma~\ref{lm:psi1} and the first inequality in
  \eqref{eq:psi1} that $\lambda\mapsto\varphi(\lambda,x)$ is strictly
  increasing, and that
  $\lim_{\lambda\to\infty}\varphi(\lambda,x)=+\infty$. Left-continuity
  and right-continuity of $\lambda\mapsto\varphi(\lambda,x)$ follows
  from the first and the second inequality in~\eqref{eq:psi1}.
\end{proof}
In view of Proposition \ref{pr:phi-property}, if $0\notin {A}(x)$ there exists a
unique $\lambda>0$ such that $\varphi(\lambda,x)=\theta$.
%
%
It remains to analyze how such a $\lambda$ depends on $x$.
Define, for $\theta>0$
\begin{align}
  \label{eq:lambda}
  \begin{aligned}
  &\Omega={\mathcal H}\setminus {A}^{-1}(0),\\
  &\Lambda_\theta:\Omega\to ]0,\infty[,\quad \Lambda_\theta(x)=
  \left(\varphi(\cdot,x)\right)^{-1}(\theta).
  \end{aligned}
\end{align}
Observe that $\Omega$ is open. More precisely, 
\begin{align}
  \label{eq:wo}
  \left\{z\in {\mathcal H}\;\left|\; \norm{z-x} <
      \dfrac{\theta}{\Lambda_\theta(x)}\right\}\right.&\subset
  \Omega,\qquad\forall x\in \Omega.
\end{align}
To prove this inclusion, suppose that
$\norm{z-x}<\theta/\Lambda_\theta(x)$. By the triangle inequality and
Proposition~\ref{pr:psi0} we have
\begin{align*}
  \varphi(\Lambda_\theta(x),z) \geq 
  \varphi(\Lambda_\theta(x),x) -\Abs{\varphi(\Lambda_\theta(x),z)
  -\varphi(\Lambda_\theta(x),x)}
  \geq \theta-\Lambda_\theta(x)\norm{z-x}>0.
\end{align*}
Hence, $z\notin A^{-1}(0)$.

Function $\Lambda_\theta$
allows us to express  \eqref{eq:edomm} as an autonomous  EDO:
\begin{equation}\label{eq:edoa}
\left\{
\begin{array}{l}
\dot x(t) + x(t) - \left(I + \Lambda_\theta(x(t)) {A}\right)^{-1}x(t) = 0;    \\
\rule{0pt}{20pt}
x(0)=x_0.
\end{array}\right.
\end{equation}
In order to study the properties of the function $\Lambda_\theta$, it is convenient to define
\begin{align}
  \label{eq:g}
  \Gamma_\theta(x)=\min\{\alpha>0\;|\; \norm{x-(I + \alpha^{-1}{A})^{-1}x}\leq
  \alpha\theta\}.
\end{align}
\begin{lemma}
  \label{lm:g}
  The function $\Gamma_\theta :   \mathcal H \rightarrow \mathbb R^+$ is Lipschitz continuous with constant $1/\theta$ and
  \begin{align*}
    \Gamma_\theta(x)&=
    \begin{cases}
      1/\Lambda_\theta(x),& if \ x\in \Omega\\
     0,&\text{otherwise}
    \end{cases}
  \end{align*}
\end{lemma}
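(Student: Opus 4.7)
The plan is to rewrite everything in terms of $\varphi$, then the two claims fall out of results already established. Using the identity $\varphi(\lambda,x)=\lambda\|x-J_\lambda x\|$ with the substitution $\lambda=1/\alpha$, the defining inequality $\|x-(I+\alpha^{-1}A)^{-1}x\|\le\alpha\theta$ is equivalent to $\varphi(1/\alpha,x)\le\theta$. Thus
\[
  \Gamma_\theta(x)=\inf\{\alpha>0\;|\;\varphi(1/\alpha,x)\le\theta\}.
\]

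First I would derive the explicit formula. For $x\in\Omega$, Proposition~\ref{pr:phi-property} says $\lambda\mapsto\varphi(\lambda,x)$ is continuous and strictly increasing from $0$ to $+\infty$, so $\varphi(1/\alpha,x)\le\theta$ iff $1/\alpha\le\Lambda_\theta(x)$, i.e.\ $\alpha\ge 1/\Lambda_\theta(x)$; the minimum is therefore attained at $1/\Lambda_\theta(x)$. For $x\in A^{-1}(0)$, the last line of Lemma~\ref{lm:psi1} gives $\varphi(1/\alpha,x)=0$ for every $\alpha>0$, so the set of admissible $\alpha$ is all of $]0,\infty[$, and $\Gamma_\theta(x)=0$ as claimed.

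The remaining task is the Lipschitz estimate, which is where the work lies. Fix $x,y\in\mathcal H$, let $\alpha_x=\Gamma_\theta(x)$, and set $\alpha=\alpha_x+\|x-y\|/\theta$. I would show that $\varphi(1/\alpha,y)\le\theta$, which by the formula characterization of $\Gamma_\theta$ forces $\Gamma_\theta(y)\le\alpha$, and symmetry then gives $|\Gamma_\theta(x)-\Gamma_\theta(y)|\le\|x-y\|/\theta$. The proof of $\varphi(1/\alpha,y)\le\theta$ combines two ingredients. By Proposition~\ref{pr:psi0},
\[
  \varphi(1/\alpha,y)\le\varphi(1/\alpha,x)+\tfrac{1}{\alpha}\|x-y\|.
\]
If $x\in\Omega$, then $\alpha\ge\alpha_x>0$ so $1/\alpha\le 1/\alpha_x$, and the first inequality of Lemma~\ref{lm:psi1} applied with $\lambda_1=1/\alpha$, $\lambda_2=1/\alpha_x$ yields
\[
  \varphi(1/\alpha,x)\le\tfrac{\alpha_x}{\alpha}\varphi(1/\alpha_x,x)=\tfrac{\alpha_x}{\alpha}\theta,
\]
using $\varphi(1/\alpha_x,x)=\theta$ from the formula just proved. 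Combining,
\[
  \varphi(1/\alpha,y)\le\tfrac{\alpha_x\theta+\|x-y\|}{\alpha}=\theta,
\]
by the very choice of $\alpha$.

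If instead $x\in A^{-1}(0)$ then $\alpha_x=0$ and $\varphi(1/\alpha,x)=0$, so the same chain degenerates to $\varphi(1/\alpha,y)\le\|x-y\|/\alpha=\theta$ directly. In either case we obtain $\Gamma_\theta(y)\le\alpha_x+\|x-y\|/\theta$, which is the desired one-sided bound. The main delicate point is lining up the two sharp estimates (Proposition~\ref{pr:psi0} in the $x$-variable and the linear lower bound from Lemma~\ref{lm:psi1} in $\lambda$) so that the $1/\alpha$ factor scales exactly to produce the constant $1/\theta$; once $\alpha$ is chosen as above this is automatic. The edge case $x\in A^{-1}(0)$ (where the minimum in the definition is not attained and must be read as an infimum) needs a brief separate sentence but causes no real trouble.
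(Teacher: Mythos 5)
Your proof is correct and follows essentially the same route as the paper's: establish the explicit formula for $\Gamma_\theta$, then prove the Lipschitz bound by showing $\beta=\Gamma_\theta(x)+\|x-y\|/\theta$ lies in the admissible set for $y$, using nonexpansiveness in the spatial variable (Proposition~\ref{pr:psi0}) together with the monotonicity encoded in the first inequality of Lemma~\ref{lm:psi1}, and then invoking symmetry. The only cosmetic difference is that you phrase the argument entirely through $\varphi$ (so you need the scaled form $\varphi(1/\alpha,x)\le(\alpha_x/\alpha)\theta$), whereas the paper works directly with $\alpha\mapsto\|x-J_{\alpha^{-1}}x\|$ and lets the $\alpha$-to-$\beta$ monotonicity absorb the scaling; your handling of the $x\in A^{-1}(0)$ edge case (infimum, not minimum) is a small point the paper glosses over.
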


\begin{proof}
The first inequality in \eqref{eq:psi1} is equivalent to saying that
$\lambda\mapsto\norm{x-(I + \lambda {A})^{-1}x}$ is
 a non-decreasing function.  Therefore, $\alpha\mapsto\norm{x-(I + \alpha^{-1}
    {A})^{-1}x}$ is a (continuous) non-increasing function. As a consequence, the set
  \begin{align*}
    \{\alpha>0\;|\; \norm{x-(I + \alpha^{-1}{A})^{-1}x}\leq
  \alpha\theta\}
  \end{align*}
  is always a nonempty interval, and $\Gamma_\theta$ is a real-valued non-negative
  function. The relationship between $\Gamma_\theta(x)$ and $\Lambda_\theta(x)$ is straightforward:
  by definition, if \ $x\in \Omega$
 \begin{align*}
\Gamma_\theta(x)&=\min\{\alpha>0\;|\; \frac{1}{\alpha}\norm{x-(I + \frac{1}{\alpha}{A})^{-1}x}\leq \theta\},\\
&= \frac{1}{\sup\{\lambda \;|\; \lambda\norm{x-(I + \lambda{A})^{-1}x}\leq \theta\}   },   \\
& = 1/\Lambda_\theta(x).
  \end{align*}
  Moreover, if $x \in S$, then for any $\alpha>0$, $x-(I + \alpha^{-1}{A})^{-1}x=0$, and $\Gamma_\theta(x) =0$.\\
 Let us now show that  $\Gamma_\theta$ is Lipschitz continuous.
Take $x,y\in {\mathcal H}$ and $\alpha>0$. Suppose that $ \norm{x-(I +\alpha^{-1}{A})^{-1}x}\leq
  \alpha\theta$.  We use that $x \mapsto \norm{x-(I + \lambda{A})^{-1}x}$ is nonexpansive (a consequence of the equality $\norm{x-(I + \lambda{A})^{-1}x}= \norm{\lambda A_{\lambda}x}$ and  Proposition \ref{resolv. cont}, item $i)$). Hence
  \begin{align*}
     \norm{y-(I +\alpha^{-1}{A})^{-1}y} & \leq \norm{x-(I + \alpha^{-1}{A})^{-1}x}+
     \norm{y-x}\\
     & \leq
  \alpha\theta+\norm{y-x}\\
  &=\left(\alpha+\dfrac{\norm{y-x}}{\theta}\right)\theta.
  \end{align*}
  Let $\beta=\alpha+\norm{y-x}/\theta$. Since $\beta\geq \alpha$, by using again that $\lambda\mapsto\norm{x-(I + \lambda^{-1}
    {A})^{-1}x}$ is a  non-increasing function,
  \begin{align*}
    \norm{y-(I + \beta^{-1}{A})^{-1}y}\leq  \norm{y-(I + \alpha^{-1}{A})^{-1}y}
   \leq \beta\theta.
  \end{align*}
  By definition of $\Gamma_\theta$, we deduce that $\Gamma_\theta(y)\leq
  \beta= \alpha+\norm{y-x}/\theta$. This being true for any $\alpha \geq
  \Gamma_\theta(x)$, it follows that
  $\Gamma_\theta(y)\leq\Gamma_\theta(x)+\norm{y-x}/\theta$.  Since the
  same inequality holds by interchanging $x$ with $y$, we conclude that
  $\Gamma_\theta$ is $1/\theta$-Lipschitz continuous.
\end{proof}

Observe that in \eqref{eq:edomm}
\[
\lambda(t)=\Lambda_\theta(x(t)),\quad \dot x (t) = J_{\Lambda_\theta(x(t))}x(t)-x(t).
\]
We are led to study the vector field $F$ governing this EDO,
\begin{align}
  \label{eq:vff}
  F:\Omega\to{\mathcal H},\qquad F(x)=J_{\Lambda_\theta(x)}x-x.
\end{align}
\begin{proposition}
  \label{pr:lp}
  The vector field $F$ 
  is locally Lipschitz continuous.
\end{proposition}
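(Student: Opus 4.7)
The plan is to control $F(x)-F(y)=[J_{\Lambda_\theta(x)}x-J_{\Lambda_\theta(y)}y]-(x-y)$ by a triangle-inequality decomposition that separates the dependence on the argument of the resolvent from the dependence on the index, using the tools already assembled in Proposition \ref{resolv. cont} and Lemma \ref{lm:g}.

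First I would upgrade Lemma \ref{lm:g} to a local Lipschitz property for $\Lambda_\theta$ itself. Fix $x_0\in\Omega$. Since $\Gamma_\theta$ is globally $(1/\theta)$-Lipschitz with $\Gamma_\theta(x_0)=1/\Lambda_\theta(x_0)>0$, there is a radius $r>0$ such that on the closed ball $\overline{B}(x_0,r)\subset \Omega$ we have $\Gamma_\theta(x)\geq \tfrac12\Gamma_\theta(x_0)=:c>0$. Consequently $\Lambda_\theta=1/\Gamma_\theta\leq 1/c=:\Lambda$ on that ball, and the identity
\[
\Lambda_\theta(x)-\Lambda_\theta(y)=\dfrac{\Gamma_\theta(y)-\Gamma_\theta(x)}{\Gamma_\theta(x)\Gamma_\theta(y)}
\]
together with the global Lipschitz bound on $\Gamma_\theta$ gives $|\Lambda_\theta(x)-\Lambda_\theta(y)|\leq \|x-y\|/(\theta c^2)$ for all $x,y\in\overline{B}(x_0,r)$. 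Also let $\delta:=\Gamma_\theta(x_0)/(2\Gamma_\theta(x_0)^{-2})$ or, more simply, $\delta$ any positive lower bound of $\Lambda_\theta$ on $\overline{B}(x_0,r)$, which exists by continuity.

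Next I would decompose
\[
F(x)-F(y)=\bigl(J_{\Lambda_\theta(x)}x-J_{\Lambda_\theta(x)}y\bigr)+\bigl(J_{\Lambda_\theta(x)}y-J_{\Lambda_\theta(y)}y\bigr)-(x-y).
\]
The first bracket is bounded by $\|x-y\|$ by the nonexpansiveness statement in Proposition \ref{resolv. cont}$\,i)$. For the second bracket, since $\Lambda_\theta(x),\Lambda_\theta(y)\in[\delta,\Lambda]$, inequality \eqref{local.lip2} yields
\[
\|J_{\Lambda_\theta(x)}y-J_{\Lambda_\theta(y)}y\|\leq |\Lambda_\theta(x)-\Lambda_\theta(y)|\,\|A_\delta y\|.
\]
Because $A_\delta$ is $(1/\delta)$-Lipschitz and $\overline{B}(x_0,r)$ is bounded, $\|A_\delta y\|$ is uniformly bounded on $\overline{B}(x_0,r)$ by some constant $M$. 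Combining everything with the local Lipschitz estimate on $\Lambda_\theta$ derived in the first step produces
\[
\|F(x)-F(y)\|\leq\Bigl(2+\dfrac{M}{\theta c^2}\Bigr)\|x-y\|,\qquad x,y\in\overline{B}(x_0,r),
\]
which is the desired local Lipschitz continuity.

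I don't foresee a real obstacle here; the only mildly delicate point is making sure the intermediate constants ($c$, $\delta$, $M$, and the local Lipschitz constant of $\Lambda_\theta$) can all be obtained on the same neighbourhood, which is handled by shrinking $r$ once and using continuity of $\Gamma_\theta$ plus the openness of $\Omega$ established in \eqref{eq:wo}.
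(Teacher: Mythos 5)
Your proof is correct and takes essentially the same route as the paper's: decompose $F(x)-F(y)$ into a piece where only the argument of the resolvent varies (controlled by nonexpansiveness) and a piece where only the index varies (controlled by Proposition \ref{resolv. cont}), then use Lemma \ref{lm:g} to turn the global $(1/\theta)$-Lipschitz continuity of $\Gamma_\theta$ into a local Lipschitz estimate for $\Lambda_\theta$. The only divergences are cosmetic: the paper evaluates $\norm{A_\mu y}$ exactly as $\theta/\mu^2$ from the defining relation $\varphi(\mu,y)=\theta$ and groups $x-y$ with the first difference so as to use the nonexpansiveness of $I-J_\lambda$ (gaining a factor of two), whereas you use the cruder uniform bound $\norm{A_\delta y}\leq M$ on the ball together with plain nonexpansiveness of $J_\lambda$; both are valid and yield a local Lipschitz constant.
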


\begin{proof}
  Take $x_0\in\Omega$ and $0<r<\theta/\Lambda_\theta(x_0)$. Set $\lambda_0  =\Lambda_\theta(x_0)$. By \eqref{eq:wo} we have
   $B(x_0,r)\subset \Omega$. In view of the choice of $r$ and
  Lemma~\ref{lm:g}, for any $x\in B(x_0,r)$
  \begin{align}
    \label{eq:baux}
  0<\dfrac{1}{\lambda_0}-\dfrac{r}{\theta}\leq
  \dfrac{1}{\Lambda_\theta(x)}=\Gamma_\theta(x) \leq
  \dfrac{1}{\lambda_0}+\dfrac{r}{\theta}.
  \end{align}
  Take $x,y\in B(x_0,r)$ and let
  \[
  \lambda=\Lambda_\theta(x),\qquad \mu=\Lambda_\theta(y).
  \]
By using that $x \mapsto \norm{x-J_\lambda(x)}$ is nonexpansive, and the resolvent equation (Proposition  \ref{resolv. cont}, item $iii)$), we have
   \begin{align*}
     \norm{F(x)-F(y)} &= \norm{J_\lambda x-x-\left(J_\mu y-y\right)} \\
     &\leq \norm{J_\lambda x-x-\left(J_\lambda y-y\right)}+
     \norm{J_\lambda y-J_\mu y}\\
     &\leq \norm{x-y}+\abs{\lambda-\mu}\dfrac{\norm{J_\mu y-y}}{\mu}\\
     &= \norm{x-y}+\dfrac{\abs{\lambda-\mu}}{\mu^2}\;\theta
   \end{align*}
   where the last equality follows from the definition of $\mu$
   and \eqref{eq:lambda}. Using  Lemma~\ref{lm:g} we have
  \begin{align*}
    \dfrac{\abs{\lambda-\mu}}{\mu^2} &=
    \dfrac{\lambda}{\mu}\Abs{\dfrac{1}{\mu}-\dfrac{1}{\lambda}}\\
    &= \dfrac{\Gamma_\theta(y)}{\Gamma_\theta(x)}
    \abs{\Gamma_\theta(y)-\Gamma_\theta(x)} \leq \frac{1}{\theta}
    \dfrac{\Gamma_\theta(y)}{\Gamma_\theta(x)}\norm{y-x}.
  \end{align*}
  In view of \eqref{eq:baux},
  \begin{align*}
    \dfrac{\Gamma_\theta(y)}{\Gamma_\theta(x)} \leq
    \dfrac{\theta+\lambda_0r}{\theta-\lambda_0r}.
  \end{align*}
   Combining the three above results, we conclude that
   \begin{align*}
     \norm{F(x)-F(y)}\leq \left[
       1+\dfrac{\theta+\lambda_0r}{\theta-\lambda_0r} \right]
     \norm{x-y}=\dfrac{2\theta}{\theta-\lambda_0r}\norm{x-y}.
   \end{align*}
   which is the desired result.
\end{proof}

\section{Existence and uniqueness of a global solution}
Given $x_0\in\Omega = \mathcal H \setminus A^{-1}(0)$, we study the Cauchy problem
\begin{equation}\label{eq:edomm-cauchy}
\left\{
\begin{array}{l}
\dot x(t) + x(t) -(I + \lambda(t) {A})^{-1}x(t) =0,\qquad \lambda (t)>0,    \\
\rule{0pt}{20pt}
\lambda(t)\norm{(I + \lambda(t) {A})^{-1}x(t)-x(t)}=\theta,\\
\rule{0pt}{20pt}
x(0)=x_0.
\end{array}\right.
\end{equation}
Note that the assumption $x_0\in\Omega = \mathcal H \setminus A^{-1}(0)$ is
not restrictive, since when $x_0 \in A^{-1}(0)$, the problem is already
solved. Following the results of the previous section, \eqref{eq:edomm-cauchy}
can be equivalently formulated as an autonomous EDO, with respect to the
unknown function $x$.
\begin{equation}\label{eq:Cauchy1}
\left\{
\begin{array}{l}
\dot x(t) + x(t) - \left(I + \Lambda_\theta(x(t)) {A}\right)^{-1}x(t) = 0;    \\
\rule{0pt}{20pt}
x(0)=x_0.
\end{array}\right.
\end{equation}
Let us first state a local existence result.
\begin{proposition}
  \label{pr:local}
  For any $x_0\in\Omega = \mathcal H \setminus A^{-1}(0)$, there exists some $\varepsilon>0$ such that
  \eqref{eq:edomm-cauchy}  has a unique solution $(x,\lambda):[0,\varepsilon]\to {\mathcal H}\times \R_{++}$. Equivalently,
  \eqref{eq:Cauchy1} has a unique solution $x:[0,\varepsilon ]\to {\mathcal H}$.  For this
  solution, $x(\cdot)$ is $\mathscr{C}^1$, and $\lambda(\cdot)$ is locally
   Lipschitz continuous.
\end{proposition}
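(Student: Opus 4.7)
The plan is to reduce everything to the autonomous formulation \eqref{eq:Cauchy1} and then invoke the classical Cauchy-Lipschitz theorem in Hilbert spaces. The equivalence between \eqref{eq:edomm-cauchy} and \eqref{eq:Cauchy1} has already been established: since $x_0 \in \Omega$ and $\Omega$ is open (by \eqref{eq:wo}), and since $\Lambda_\theta$ is the unique map that solves $\varphi(\lambda,x)=\theta$ for $x \in \Omega$ (Proposition \ref{pr:phi-property}), prescribing $\lambda(t)=\Lambda_\theta(x(t))$ is the only way to satisfy the algebraic constraint as long as $x(t) \in \Omega$.

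First I would observe that Proposition \ref{pr:lp} gives exactly the hypothesis needed for Cauchy-Lipschitz: the vector field $F(x)=J_{\Lambda_\theta(x)}x-x$ is locally Lipschitz on the open set $\Omega \subset \mathcal H$, and $x_0 \in \Omega$. Standard Cauchy-Lipschitz (Picard-Lindelöf) theory in Banach spaces therefore yields some $\varepsilon>0$ and a unique $\mathscr{C}^1$ solution $x:[0,\varepsilon]\to \mathcal H$ of
\begin{equation*}
\dot x(t)=F(x(t)),\qquad x(0)=x_0,
\end{equation*}
with $x(t)\in \Omega$ for all $t\in[0,\varepsilon]$ (after shrinking $\varepsilon$ if necessary so that the trajectory stays in a ball $B(x_0,r)\subset \Omega$ of the type used in Proposition \ref{pr:lp}). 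The $\mathscr{C}^1$ regularity is automatic since $\dot x = F \circ x$ is continuous.

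Next I recover $\lambda(\cdot)$ by setting $\lambda(t)=\Lambda_\theta(x(t))$, which by construction satisfies both equations of \eqref{eq:edomm-cauchy}. For local Lipschitz regularity of $\lambda(\cdot)$, I invoke Lemma \ref{lm:g}: $\Gamma_\theta=1/\Lambda_\theta$ is globally $(1/\theta)$-Lipschitz on $\mathcal H$. On the compact trajectory $x([0,\varepsilon])\subset\Omega$, the continuous function $\Gamma_\theta\circ x$ attains a strictly positive minimum $\gamma_{\min}>0$, hence for $s,t\in[0,\varepsilon]$,
\begin{equation*}
|\lambda(t)-\lambda(s)|=\left|\frac{1}{\Gamma_\theta(x(t))}-\frac{1}{\Gamma_\theta(x(s))}\right|
\leq \frac{1}{\gamma_{\min}^{2}}|\Gamma_\theta(x(t))-\Gamma_\theta(x(s))|
\leq \frac{1}{\theta\,\gamma_{\min}^{2}}\,\|x(t)-x(s)\|,
\end{equation*}
and since $x$ is $\mathscr{C}^1$ on a compact interval it is Lipschitz, so $\lambda(\cdot)$ is locally (in fact, on $[0,\varepsilon]$) Lipschitz continuous.

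For uniqueness of the pair $(x,\lambda)$: any other solution $(\tilde x,\tilde\lambda)$ of \eqref{eq:edomm-cauchy} on some $[0,\varepsilon']$ must have $\tilde x(0)=x_0\in \Omega$; by continuity $\tilde x$ remains in $\Omega$ on a possibly smaller interval, and there Proposition \ref{pr:phi-property} forces $\tilde\lambda(t)=\Lambda_\theta(\tilde x(t))$, so $\tilde x$ solves \eqref{eq:Cauchy1}, and uniqueness in \eqref{eq:Cauchy1} finishes the argument. The only subtle point, and really the only thing one must check beyond citing Cauchy-Lipschitz, is that the trajectory does not instantaneously leave $\Omega$; this is handled by the openness of $\Omega$ combined with continuity of $x$, so no genuine obstacle arises.
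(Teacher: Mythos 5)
Your proposal is correct and follows essentially the same route as the paper: reduce to the autonomous equation \eqref{eq:Cauchy1}, apply Cauchy--Lipschitz via Proposition \ref{pr:lp}, and recover $\lambda = \Lambda_\theta \circ x = 1/(\Gamma_\theta \circ x)$ using Lemma \ref{lm:g}. Your write-up is somewhat more explicit than the paper's terse proof on two points — the lower bound $\gamma_{\min}>0$ for $\Gamma_\theta\circ x$ on the compact trajectory (which is what makes the reciprocal Lipschitz), and the argument that any solution of \eqref{eq:edomm-cauchy} is forced to satisfy $\tilde\lambda = \Lambda_\theta\circ\tilde x$ — but these are exactly the details the paper compresses into the phrases ``by taking $\epsilon$ sufficiently small'' and ``equivalently.''
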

\begin{proof} We use the reformulation of   \eqref{eq:edomm-cauchy}  as an autonomous differential equation, as described in  \eqref{eq:Cauchy1}. Equivalently
\[
 \dot x (t) =  F(x(t)),
\]
with $F(x)$ as in~\eqref{eq:vff}.
By Proposition~\ref{pr:lp}, the
vector field $F$ is locally Lipschitz continuous on the open set $\Omega
\subset \mathcal H$.  Hence, by Cauchy-Lipschitz theorem (local
version), for any $x_0\in\Omega$, there exists a unique local solution
$x:[0,\varepsilon ]\to {\mathcal H}$ of \eqref{eq:edoa}, for some
$\varepsilon >0$.  Equivalently, there exists a unique local solution
$(x,\lambda)$ of \eqref{eq:edomm}. Clearly $x$ is a classical
$\mathscr{C}^1$ orbit, and $t \mapsto\lambda (t) = \Lambda_\theta(x(t))=
\frac{1}{ \Gamma_\theta(x(t))}$ is Lipschitz continuous (by taking
$\epsilon$ sufficiently small), a consequence of Lemma~\ref{lm:g}, and
$x(t) \in \Omega$.
\end{proof}
In order to pass from a local to a global solution, we first establish some further properties of the map $t \mapsto \lambda(t)$.
\begin{lemma}
  \label{lm:ldot}
  If $(x, \lambda):[0,\varepsilon]\to {\mathcal H}\times \R_{++}$ is a solution
  of  \eqref{eq:edomm-cauchy}, then $|\dot\lambda (t)|\leq\lambda (t)$ for almost all
  $t\in[0,\varepsilon]$.
\end{lemma}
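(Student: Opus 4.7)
The plan is to express $\lambda(t)$ in terms of the Lipschitz function $\Gamma_\theta$ from Lemma~\ref{lm:g} and then combine its Lipschitz bound with the a priori bound on $\|\dot x(t)\|$ coming from the algebraic constraint in \eqref{eq:edomm-cauchy}.

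First I would read off from the two equations of \eqref{eq:edomm-cauchy} the identity
\[
\|\dot x(t)\|=\|(I+\lambda(t)A)^{-1}x(t)-x(t)\|=\frac{\theta}{\lambda(t)}.
\]
Next, by Lemma~\ref{lm:g}, since $x(t)\in\Omega$ throughout the solution (one should briefly note that $x(t)\notin A^{-1}(0)$, as otherwise $\lambda(t)$ would not satisfy the second equation in \eqref{eq:edomm-cauchy} with $\theta>0$), we have $\Gamma_\theta(x(t))=1/\lambda(t)$. Since $x(\cdot)$ is $\mathscr{C}^1$ and $\Gamma_\theta$ is $1/\theta$-Lipschitz, the map $t\mapsto 1/\lambda(t)$ is locally Lipschitz, and for almost every $t$,
\[
\left|\frac{d}{dt}\frac{1}{\lambda(t)}\right|=\left|\frac{d}{dt}\Gamma_\theta(x(t))\right|\leq \frac{1}{\theta}\,\|\dot x(t)\|.
\]

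Combining these two bounds gives, almost everywhere,
\[
\left|\frac{\dot\lambda(t)}{\lambda(t)^2}\right|\leq \frac{1}{\theta}\cdot\frac{\theta}{\lambda(t)}=\frac{1}{\lambda(t)},
\]
which immediately yields $|\dot\lambda(t)|\leq \lambda(t)$. The only mildly delicate point is justifying the chain rule for $t\mapsto \Gamma_\theta(x(t))$ in the almost-everywhere sense, but this is standard: the composition of a Lipschitz function with a $\mathscr{C}^1$ curve is locally Lipschitz, hence differentiable almost everywhere, and the chain rule bound $|(\Gamma_\theta\circ x)'(t)|\leq (1/\theta)\|\dot x(t)\|$ is immediate from the Lipschitz constant of $\Gamma_\theta$.
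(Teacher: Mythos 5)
Your proof is correct and takes essentially the same approach as the paper: both rely on the identity $\Gamma_\theta(x(t))=1/\lambda(t)$, the $1/\theta$-Lipschitz property of $\Gamma_\theta$ from Lemma~\ref{lm:g}, and the relation $\|\dot x(t)\|=\theta/\lambda(t)$. The only cosmetic difference is that you differentiate $1/\lambda(t)$ via the chain rule, whereas the paper bounds the difference quotient $|\lambda(t')-\lambda(t)|/|t'-t|$ directly and takes a $\limsup$, sidestepping the chain-rule justification you flag at the end.
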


\begin{proof}
  Take $t,t'\in[0,\varepsilon]$, $t\neq t'$. Then
  \begin{align}
    \Abs{\lambda(t')-\lambda(t)}&=
    \lambda(t)\lambda(t')\Abs{\frac{1}{\lambda(t)}-\frac{1}{\lambda(t')}}\\
    &= \lambda(t)\lambda(t')
    \Abs{\Gamma_\theta(x(t))-\Gamma_\theta(x(t'))}\\
    &\leq \frac{\lambda(t)\lambda(t')\norm{x(t)-x'(t)}}{\theta},
  \end{align}
  where the last inequality follows from  Lemma~\ref{lm:g}. Therefore
  \begin{align}
    \lim\sup_{t'\to t}\Abs{\frac{\lambda(t')-\lambda(t)}{t'-t}}
    &\leq \lim_{t'\to t}\frac{\lambda(t)\lambda(t')\norm{x(t')-x(t)}}{\theta
      \abs{t'-t} }=
     \lambda(t)^2\norm{\dot x(t)}/\theta=\lambda(t).
  \end{align}
\end{proof}
\begin{lemma}
  \label{lm:lnond}
  If $(x, \lambda):[0,\varepsilon]\to {\mathcal H}\times \R_{++}$ is a solution of
  \eqref{eq:edomm-cauchy}, then $\lambda(\cdot)$ is non-decreasing.
\end{lemma}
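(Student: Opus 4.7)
Combining the two equations of \eqref{eq:edomm-cauchy} gives $\lambda(t)\norm{\dot x(t)}=\theta$, so the claim is equivalent to $t\mapsto\norm{\dot x(t)}$ being non-increasing. The key idea is to exploit the fact that the ODE itself yields a graph-pair for $A$: setting
\[
y(t):=J_{\lambda(t)}x(t)=x(t)+\dot x(t),\qquad w(t):=A_{\lambda(t)}x(t)=-\dot x(t)/\lambda(t),
\]
we have $w(t)\in A(y(t))$ for every $t\in[0,\varepsilon]$. Monotonicity of $A$ therefore gives
\[
\langle y(t)-y(s),\,w(t)-w(s)\rangle\ge 0,\qquad \forall\,s,t\in[0,\varepsilon].
\]

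Next I would set up the regularity needed to differentiate this. Since $F$ is locally Lipschitz (Proposition~\ref{pr:lp}) and $x$ is $\mathscr{C}^1$ (Proposition~\ref{pr:local}), $\dot x(t)=F(x(t))$ is locally Lipschitz, hence differentiable a.e.; together with the Lipschitz regularity of $\lambda$ from Lemma~\ref{lm:ldot}, both $y$ and $w$ are locally Lipschitz. At any common point of differentiability, a Taylor expansion of the monotonicity inequality and division by $(t-s)^2$ yields $\langle\dot y(t),\dot w(t)\rangle\ge 0$ for a.e.\ $t$.

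To convert this into a sign condition on $\dot\lambda$, I would differentiate the constraint $\lambda(t)^2\norm{\dot x(t)}^2=\theta^2$ to obtain $\langle\dot x,\ddot x\rangle=-\dot\lambda\norm{\dot x}^2/\lambda$. Substituting $\dot y=\dot x+\ddot x$ and $\dot w=-\ddot x/\lambda+\dot\lambda\,\dot x/\lambda^2$ into $\langle\dot y,\dot w\rangle\ge 0$ and eliminating the mixed term reduces the inequality to
\[
\norm{\dot x}^2\,\frac{\dot\lambda}{\lambda}\Bigl(2-\frac{\dot\lambda}{\lambda}\Bigr)\ge\norm{\ddot x}^2\ge 0.
\]
Because Lemma~\ref{lm:ldot} provides $|\dot\lambda|/\lambda\le 1$, the factor $2-\dot\lambda/\lambda$ lies in $[1,3]$, and the inequality forces $\dot\lambda\ge 0$ a.e. Absolute continuity of $\lambda$ then gives the monotonicity on $[0,\varepsilon]$. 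The only delicate point is the regularity bookkeeping: I would work at common Lebesgue points of $\ddot x$ and $\dot\lambda$, where the Taylor expansions used above are genuinely valid, and note that the a.e.\ conclusion suffices thanks to the Lipschitz character of $\lambda$.
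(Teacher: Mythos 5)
Your proof is correct, and it follows a genuinely different route from the paper's.

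The paper works purely with first-order quantities. Fixing $t$, it perturbs along the tangent direction, $z_h = x(t) + h\dot x(t)$, exploits the resolvent identity to compute $\varphi((1-h)\lambda(t), z_h) = (1-h)^2\theta$ exactly, and then uses the two monotonicity inequalities of $\varphi$ from Lemma~\ref{lm:psi1} together with the Lipschitz estimate of Proposition~\ref{pr:psi0} to obtain $\varphi(\lambda(t)/(1+\rho_h), x(t+h)) \le \theta$, with $\rho_h = o(h)$. Comparing with the defining relation $\varphi(\lambda(t+h),x(t+h))=\theta$ and the strict monotonicity of $\varphi(\cdot,x)$ gives $\lambda(t+h) \ge \lambda(t)/(1+\rho_h)$, hence a lower Dini derivative $\ge 0$. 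This needs only $x\in\mathscr{C}^1$ and never touches $\ddot x$ or $\dot\lambda$ directly; in particular it is independent of Lemma~\ref{lm:ldot}.

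You instead differentiate once more: you exploit that $(y,w)=(J_{\lambda}x, A_\lambda x)$ is a graph-pair of $A$, pass monotonicity to the infinitesimal form $\langle\dot y,\dot w\rangle\ge 0$ (the same mechanism the paper uses later in Proposition~\ref{pr:oode}~iv), differentiate the algebraic constraint $\lambda^2\norm{\dot x}^2=\theta^2$ to eliminate $\langle\dot x,\ddot x\rangle$, and land on the clean identity
\[
\lambda\,\langle\dot y,\dot w\rangle \;=\; \norm{\dot x}^2\,\frac{\dot\lambda}{\lambda}\Bigl(2-\frac{\dot\lambda}{\lambda}\Bigr)-\norm{\ddot x}^2 .
\]
Since $\norm{\dot x}=\theta/\lambda>0$ and $\abs{\dot\lambda}/\lambda\le1$ by Lemma~\ref{lm:ldot} makes the factor $2-\dot\lambda/\lambda$ strictly positive, non-negativity forces $\dot\lambda\ge0$ a.e., and Lipschitz continuity of $\lambda$ upgrades this to monotonicity. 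The regularity bookkeeping you sketch is sound: $\dot x=F(x)$ is Lipschitz on $[0,\varepsilon]$ because $F$ is locally Lipschitz (Proposition~\ref{pr:lp}) and $x$ is $\mathscr{C}^1$ with range in a compact subset of $\Omega$, and Lipschitz curves into a Hilbert space are a.e.\ strongly differentiable, so $\ddot x$ and $\dot\lambda$ exist a.e.\ and the Taylor/division-by-$(t-s)^2$ step is valid at those points. The trade-off relative to the paper is that you need both a.e.\ second differentiability and the prior bound $\abs{\dot\lambda}\le\lambda$, whereas the paper's argument is self-contained at the $\mathscr{C}^1$ level; what you gain is a transparent structural explanation of \emph{why} $\lambda$ increases, namely that monotonicity of $A$ at second order leaves no room for $\dot\lambda<0$ once the constraint is enforced. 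It is also worth noting that your computation essentially re-derives items iii) and iv) of Proposition~\ref{pr:oode} and then runs the implication in the opposite direction from the paper, so the two arguments illuminate each other rather than duplicate.
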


\begin{proof}
  Since $\lambda$ is locally Lipschitz continuous, to prove that it is
  non-decreasing it suffices to show that $\dot\lambda(t)\geq 0$ for
  almost all $t\in[0,\varepsilon]$. Take $t\in[0,\varepsilon [$ and define
  \begin{align*}
    \mu=\lambda(t),\quad y=J_\mu x(t),\quad v=\mu^{-1}(x(t)-y).
  \end{align*}
  Observe that $v\in {A}(y)$ and $\mu v+y-x(t)=0$. Define
  \begin{align*}
    z_h=x(t)+h\dot x(t),\qquad 0<h<\min\{\varepsilon-t,1\}.
  \end{align*}
  Since $\dot x(t)=-\mu v$,
  we have $(1-h)\mu v+y-z_h=0$, $J_{(1-h)\mu}z_h=y$ and so
  \begin{align*}
    \varphi((1-h)\mu, z_h)=(1-h)\mu\norm{y-z_h} = (1-h)^2\mu\norm{y-x(t)}
    = (1-h)^2\theta.
  \end{align*}
  Therefore, using triangle inequality, the second inequality in
  Lemma~\ref{lm:psi1} and Proposition~\ref{pr:psi0}, we obtain
  \begin{align*}
    \varphi(\mu,x(t+h)) &\leq
    \varphi(\mu,z_h)+\Abs{\varphi(\mu,x(t+h))-\varphi(\mu,z_h)}\\
    &\leq  \dfrac{\varphi((1-h)\mu,z_h)}{(1-h)^2} +
     \mu\norm{x(t+h)-z_h}\\
    &= \theta+\mu\norm{x(t+h)-x(t)-h\dot x(t)}.
  \end{align*}
  To simplify the notation, define
  \begin{align*}
    \rho_h=\dfrac{\mu\norm{x(t+h)-x(t)-h\dot x(t)}}{\theta}.
  \end{align*}
  Observe that $\rho_h\geq 0$ (for $0<h<\min\{\varepsilon-t,1\}$), and
  $\lim_{h\to 0^+}\rho_h/h=0$. Now,
  the above inequality can be written as
  \begin{align*}
    \varphi(\mu,x(t+h))\leq \theta(1+\rho_h).
  \end{align*}
  It follows from this inequality, the non-negativity of $\rho_h$ and
  Lemma~\ref{lm:psi1} that
  \begin{align*}
    \varphi\left(\dfrac{\mu}{1+\rho_h},x(t+h)\right)\leq \theta.
  \end{align*}
  Since $\varphi(\cdot,x(t+h))$ is strictly increasing, and
  $\varphi(\lambda(t+h),x(t+h))=\theta$,
  \begin{align*}
    \lambda(t+h) \geq \dfrac{\mu}{1+\rho_h}=\dfrac{\lambda(t)}{1+\rho_h}.
  \end{align*}
  Therefore
  \begin{align*}
    \liminf_{h\to 0^+}\dfrac{\lambda(t+h)-\lambda(t)}{h} &\geq
    \lim_{h\to 0^+}\dfrac{1}{h}\left[\dfrac{\lambda(t)}{1+\rho_h}-\lambda(t)\right]
    = -\lim_{h\to 0^+}\lambda(t)\dfrac{\rho_h/h}{1+\rho_h}=0.
  \end{align*}
\end{proof}
In view of Proposition \ref{pr:local}, there exists a solution of \eqref{eq:Cauchy1}
defined on a maximal interval.
Next we will prove that this maximal interval
is $[0,+\infty[$.

\begin{theorem}
  \label{th:main.1} For any $x_0\in\Omega = \mathcal H \setminus A^{-1}(0)$, there exists
  a unique global solution $(x,\lambda):[0,+ \infty[\to {\mathcal H}\times \R_{++}$ of
  the Cauchy problem \eqref{eq:edomm-cauchy}. Equivalently,
  \eqref{eq:Cauchy1} has a unique solution $x:[0,+ \infty[\to {\mathcal H}$.  For this
  solution, $x(\cdot)$ is $\mathscr{C}^1$, and $\lambda(\cdot)$ is locally
   Lipschitz continuous. Moreover,

i) $\lambda(\cdot)$ is non-decreasing;

ii) $t \mapsto \norm{J_{\lambda (t)} x(t)-x(t)}$ is non-increasing;

iii) For any $0\leq t_0\leq t_1$
  \begin{align*}
    & \lambda(t_0)\leq\lambda(t_1)\leq e^{(t_1-t_0)}\lambda(t_0)\\
    & \norm{J_{\lambda(t_0)}x(t_0)-x(t_0)}e^{-(t_1-t_0)}\leq
     \norm{J_{\lambda(t_1)}x(t_1)-x(t_1)}\leq	\norm{J_{\lambda(t_0)}x(t_0)-x(t_0)}.
  \end{align*}
\end{theorem}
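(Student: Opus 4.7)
The local theory (Proposition~\ref{pr:local}) together with Lemmas~\ref{lm:ldot} and~\ref{lm:lnond} already supplies most of the ingredients. The plan is: (a) extend the local solution to a unique maximal interval $[0,T_{\max}[$; (b) show $T_{\max}=+\infty$ by simultaneously ruling out blow-up of $\lambda$ and exit of $x$ from $\Omega$; (c) read off (i)--(iii) from the previous lemmas combined with the algebraic identity $\lambda(t)\|J_{\lambda(t)}x(t)-x(t)\|=\theta$.

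For (a), uniqueness and local existence centered at any initial condition in $\Omega$ are provided by Proposition~\ref{pr:local}, since the vector field $F$ of \eqref{eq:vff} is locally Lipschitz on the open set $\Omega$. A standard concatenation argument produces a unique solution on a maximal interval $[0,T_{\max}[$, with $T_{\max}\in(0,+\infty]$. On this interval Lemma~\ref{lm:lnond} gives $\lambda(\cdot)$ non-decreasing (proving (i)), and Lemma~\ref{lm:ldot} gives $|\dot\lambda(t)|\leq\lambda(t)$ almost everywhere; a Grönwall argument then yields $\lambda(t)\leq e^{t}\lambda(0)$ on $[0,T_{\max}[$.

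For (b), suppose by contradiction that $T_{\max}<+\infty$. The previous bound gives $\lambda(t)\leq M:=e^{T_{\max}}\lambda(0)$ on $[0,T_{\max}[$. Since $\|\dot x(t)\|=\|J_{\lambda(t)}x(t)-x(t)\|=\theta/\lambda(t)\leq\theta/\lambda(0)$ by the algebraic equation and monotonicity of $\lambda$, the map $x(\cdot)$ is Lipschitz on $[0,T_{\max}[$ and admits a limit $x^{*}:=\lim_{t\uparrow T_{\max}}x(t)\in\mathcal H$. This limit must lie in $\Omega$: indeed, if $x^{*}\in A^{-1}(0)$, then by continuity properties of the resolvent and the characterization $\lambda(t)=\Lambda_\theta(x(t))$, we would have $\lambda(t)\to+\infty$ as $t\uparrow T_{\max}$, contradicting $\lambda(t)\leq M$. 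Applying Proposition~\ref{pr:local} at $x^{*}\in\Omega$ then extends the solution strictly past $T_{\max}$, contradicting maximality. Hence $T_{\max}=+\infty$. The $\mathscr{C}^{1}$ regularity of $x$ follows from $\dot x=F(x)$ with $F$ continuous, and the local Lipschitz regularity of $\lambda$ from $\lambda(t)=1/\Gamma_\theta(x(t))$ combined with Lemma~\ref{lm:g}.

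Item (ii) is then immediate: by the algebraic equation $\|J_{\lambda(t)}x(t)-x(t)\|=\theta/\lambda(t)$, which is non-increasing since $\lambda$ is non-decreasing. For (iii), the lower bound $\lambda(t_0)\leq\lambda(t_1)$ is (i), and $\lambda(t_1)\leq e^{t_1-t_0}\lambda(t_0)$ follows by integrating $\dot\lambda\leq\lambda$ between $t_0$ and $t_1$; the corresponding two-sided estimate for $\|J_{\lambda(t)}x(t)-x(t)\|$ is obtained by inverting and writing this norm as $\theta/\lambda(\cdot)$. The main obstacle in the argument is ensuring that the orbit does not approach $A^{-1}(0)$ in finite time; this is precisely what the a priori estimate $\dot\lambda\leq\lambda$ of Lemma~\ref{lm:ldot} controls, after which everything else is a direct reformulation of already-established facts.
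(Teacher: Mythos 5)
Your proof is correct and follows essentially the same route as the paper: local existence via the locally Lipschitz vector field, extension to a maximal interval $[0,T_{\max}[$, the a priori estimate $0\le\dot\lambda\le\lambda$ to control $\lambda$ and hence $\|\dot x\|=\theta/\lambda$ when $T_{\max}<\infty$, a contradiction by extending the solution past $T_{\max}$, and a direct read-off of (i)--(iii) from Lemmas~\ref{lm:ldot}, \ref{lm:lnond} and the identity $\lambda(t)\|J_{\lambda(t)}x(t)-x(t)\|=\theta$. The only cosmetic difference is at the step showing the limit point stays in $\Omega$: the paper verifies directly that $\|J_{\lambda_m}x_m-x_m\|=\lim_t\theta/\lambda(t)>0$, whereas you argue by contrapositive via the Lipschitz continuity of $\Gamma_\theta$ (Lemma~\ref{lm:g}); these are two formulations of the same fact.
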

\begin{proof} According to a standard argument, we argue by contradiction and assume that the maximum solution $x(\cdot)$ of \eqref{eq:Cauchy1} is defined on an interval $[0, T_{max}[$ with
$T_{max} < + \infty$. By Lemmas \ref{lm:ldot} and \ref{lm:lnond}, $\lambda(\cdot)$ is non-decreasing, and satisfies $0 \leq \dot\lambda (t)\leq\lambda (t)$ for almost all
  $t\in[0, T_{max}[$. By integration of this inequation, we obtain, for any  $t\in[0, T_{max}[$
  \begin{equation}\label{eq:Cauchy2}
 0 < \lambda (0) \leq  \lambda (t) \leq \lambda (0) e^{t}.
\end{equation}
Since  $t\leq  T_{max}$, we infer that $\lim_{t \to T_{max} }  \lambda (t):= \lambda_m$ exists and is finite. Moreover, by \eqref{eq:edomm-cauchy}
  \begin{equation}\label{eq:Cauchy3}
  \| \dot{x} (t)\| = \norm{(I + \lambda(t) {A})^{-1}x(t)-x(t)}= \frac{\theta}{\lambda(t)}.
 \end{equation}
 Combining \eqref{eq:Cauchy2} and \eqref{eq:Cauchy3}, we obtain that $\| \dot{x} (t)\|$ stays bounded when $t\in[0, T_{max}[$. By a classical argument, this implies that
 $\lim_{t \to T_{max} }  x (t):= x_m$ exists.

Moreover, by the second inequality in \eqref{eq:Cauchy2}, $\norm{(I + \lambda(t) {A})^{-1}x(t)-x(t)}= \frac{\theta}{\lambda(t)}$ stays bounded away from zero. Hence, at the limit, we have $\norm{(I + \lambda_m {A})^{-1}x_m-x_m} \neq 0$, which
means that $x_m\in\Omega = \mathcal H \setminus A^{-1}(0)$. Thus, we can apply again the local existence result, Proposition \ref{pr:local}, with Cauchy data $x_m$, and so obtain a solution defined on an interval strictly larger than $[0, T_{max}[$. This is a clear contradiction. Properties $i), ii), iii)$ are direct consequence of  Lemmas \ref{lm:ldot} and \ref{lm:lnond}. More precisely,
by integration of $0 \leq \dot\lambda (t)\leq\lambda (t)$ between $ t_0$ and $ t_1 \geq t_0$, we obtain $\lambda(t_0)\leq\lambda(t_1)\leq e^{(t_1-t_0)}\lambda(t_0)$. As a consequence
\begin{equation*}
 \norm{J_{\lambda(t_1)}x(t_1)-x(t_1)} = \frac{\theta}{\lambda(t_1)} \leq \frac{\theta}{\lambda(t_0)} = \norm{J_{\lambda(t_0)}x(t_0)-x(t_0)},
\end{equation*}
and
\begin{equation*}
\norm{J_{\lambda(t_1)}x(t_1)-x(t_1)} = \frac{\theta}{\lambda(t_1)} = \frac{\theta}{\lambda(t_0)} \times \frac{\lambda(t_0)}{\lambda(t_1)}\geq \norm{J_{\lambda(t_0)}x(t_0)-x(t_0)}e^{-(t_1-t_0)}.
\end{equation*}

\begin{remark} {\rm Property $iii)$ of Theorem \ref{th:main.1}, with $t_0 =0$, namely
$\norm{J_{\lambda(0)}x_0-x_0}e^{-t}\leq \norm{J_{\lambda(t)}x(t)-x(t)}$, implies that for all $t\geq 0$, we have $J_{\lambda(t)}x(t)-x(t) \neq 0$. Equivalently $x(t) \notin A^{-1}(0)	$, i.e., the system cannot be stabilized in a finite time.
Stabilization can be achieved only asymptotically, which is the subject of the next section.}
\end{remark}

\end{proof}

\section{Asymptotic behavior}

\subsection{Weak convergence} To prove the weak convergence of trajectories
of system \eqref{eq:edomm},
we  use the classical Opial lemma \cite{Op},	that we recall in its continuous form;
see also \cite{Bruck},	who initiated the use of this argument to analyze  the asymptotic convergence of nonlinear contraction semigroups in Hilbert spaces.

\begin{lemma}\label{Opial} Let	 $S$ be
a non empty subset of $\mathcal H$, and	$x: [0, +\infty [ \to \mathcal H$ a map. Assume that
\begin{eqnarray*}
(i) &&\mbox{for every }z\in S, \   \lim_{t \to	+\infty}	 \| x(t)- z	    \|	\mbox{ exists};\\
(ii) &&\mbox{every weak sequential cluster point of the map } x \mbox{ belongs to }S.
\end{eqnarray*}
Then
 $$
w-\lim_{t \to  +\infty} x(t) = x_{\infty}  \   \   \mbox{ exists,  for some element}x_{\infty}\in S.
$$
\end{lemma}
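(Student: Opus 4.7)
The plan is to follow the classical Opial argument, which proceeds in three short steps: boundedness, extraction of a weak cluster point, and uniqueness of the weak cluster point.

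First, I would fix any $z\in S$. By hypothesis $(i)$ the map $t\mapsto\|x(t)-z\|$ has a finite limit at infinity, hence is bounded on $[0,+\infty[$. Consequently $x(\cdot)$ itself is bounded in $\mathcal H$. Since bounded sets in a Hilbert space are weakly relatively compact (sequentially), every sequence $t_n\to+\infty$ admits a subsequence along which $x(t_n)$ converges weakly, so the set of weak sequential cluster points of $x$ at $+\infty$ is non-empty. By hypothesis $(ii)$, every such cluster point lies in $S$.

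The decisive step is uniqueness of the weak cluster point. Let $x_\infty$ and $y_\infty$ be two weak sequential cluster points, with $x(t_n)\rightharpoonup x_\infty$ and $x(s_n)\rightharpoonup y_\infty$ along suitable $t_n,s_n\to+\infty$. Since both $x_\infty,y_\infty\in S$, hypothesis $(i)$ gives the existence of
\[
\ell_1=\lim_{t\to+\infty}\|x(t)-x_\infty\|^2,\qquad \ell_2=\lim_{t\to+\infty}\|x(t)-y_\infty\|^2.
\]
Expanding the squared norms yields the polarization-type identity
\[
\|x(t)-x_\infty\|^2-\|x(t)-y_\infty\|^2 = 2\langle x(t),\, y_\infty-x_\infty\rangle + \|x_\infty\|^2-\|y_\infty\|^2,
\]
so the inner product $\langle x(t),y_\infty-x_\infty\rangle$ has a finite limit as $t\to+\infty$. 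Evaluating this limit along $t_n$ gives $\langle x_\infty,y_\infty-x_\infty\rangle$, and along $s_n$ gives $\langle y_\infty,y_\infty-x_\infty\rangle$. Equating the two yields $\langle y_\infty-x_\infty,y_\infty-x_\infty\rangle=0$, hence $x_\infty=y_\infty$.

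Finally, since $x(\cdot)$ is bounded and has a unique weak cluster point $x_\infty\in S$ at infinity, a standard subsequence argument (if $x(t)\not\rightharpoonup x_\infty$, some subnet would converge weakly to a different limit, contradicting uniqueness) gives $w\text{-}\lim_{t\to+\infty}x(t)=x_\infty$. No step is really an obstacle here; the only mild care needed is to work with sequences rather than nets so that the weak compactness of bounded sets in $\mathcal H$ can be invoked directly, and to use hypothesis $(i)$ simultaneously at the two candidate limits $x_\infty,y_\infty$ in order to pass to the limit inside the inner product.
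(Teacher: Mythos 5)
Your proof is correct and is the standard argument for Opial's lemma: boundedness of $x(\cdot)$ from $(i)$, weak sequential compactness of bounded sets in $\mathcal H$, and uniqueness of the weak sequential cluster point via the polarization identity applied to two candidate limits in $S$, using $(i)$ at both. The paper does not reproduce a proof of this lemma (it is stated as a recalled classical result with citations to Opial and Bruck), so there is no alternative argument in the paper to compare against; one small remark is that in the final step you should phrase the contradiction argument in terms of subsequences, not subnets, which you yourself point out at the end — with that reading the argument is complete.
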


Let us state our main convergence result.

\begin{theorem}
  \label{th:main.2}
  Suppose that $A^{-1} (0) \neq \emptyset$. Given $x_0\notin A^{-1} (0)$, let
 $(x,\lambda):[0,+ \infty[\to {\mathcal H}\times \R_{++}$  be the unique global solution   of
  the Cauchy problem \eqref{eq:edomm-cauchy}. Set $d_0 = d(x_0,  A^{-1} (0))  $ the distance from $x_0$ to $A^{-1} (0)$.
  Then, the following
  properties hold:

\smallskip

i) $\norm{\dot x(t)}= \norm{x(t)- J_{\lambda (t)}x(t)}\leq
    d_0/\sqrt{2t}$; \ hence \   $ \lim_{t \rightarrow +\infty}  \norm{\dot x(t)}=0$;
    
\smallskip
    
ii) $ \lambda(t) \geq \theta\sqrt{2t}/d_0$; \ hence \   $ \lim_{t \rightarrow +\infty} \lambda(t)= + \infty$;

\smallskip

iii) $w-\lim_{t \rightarrow +\infty} x(t) = x_{\infty}$
    exists, for some $x_{\infty} \in A^{-1} (0)$.
    
\smallskip

\noindent Moreover, for any $z \in A^{-1} (0)$,  $ \|x(t) - z \|$ is decreasing.
\end{theorem}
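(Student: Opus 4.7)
The main engine is Opial's lemma (Lemma \ref{Opial}) with $S = A^{-1}(0)$. The core identity is $\dot x(t) = J_{\lambda(t)} x(t) - x(t)$, and the key algebraic consequence is $\|\dot x(t)\| = \theta/\lambda(t)$, which ties (i) and (ii) together: once we control $\|\dot x(t)\|$, we control $\lambda(t)$, and conversely.

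\textbf{Step 1 (quasi-Féjer estimate).} I would first prove the last assertion: for any $z \in A^{-1}(0)$, $t \mapsto \|x(t)-z\|^2$ is nonincreasing. Compute
\begin{align*}
\tfrac{1}{2}\tfrac{d}{dt}\|x(t)-z\|^2
= \langle J_{\lambda(t)}x(t)-x(t), x(t)-z\rangle
= -\|\dot x(t)\|^2 - \lambda(t)\langle A_{\lambda(t)}x(t), J_{\lambda(t)}x(t)-z\rangle,
\end{align*}
after inserting $\pm J_{\lambda(t)}x(t)$ and using $A_{\lambda(t)}x(t) = (x(t)-J_{\lambda(t)}x(t))/\lambda(t)$. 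Since $A_{\lambda(t)}x(t) \in A(J_{\lambda(t)}x(t))$, $0 \in A(z)$ and $A$ is monotone, the second term is nonpositive. Hence
\begin{align*}
\tfrac{d}{dt}\|x(t)-z\|^2 \le -2\|\dot x(t)\|^2 \le 0,
\end{align*}
which proves monotonicity of $\|x(t)-z\|$ and gives the first condition of Opial's lemma.

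\textbf{Step 2 (estimates i) and ii)).} Integrating the displayed inequality from $0$ to $t$ and minimizing over $z \in A^{-1}(0)$ yields $\int_0^t \|\dot x(s)\|^2\,ds \le \tfrac{1}{2}d_0^2$. By Theorem \ref{th:main.1} (ii), $\|\dot x(t)\| = \theta/\lambda(t)$ is nonincreasing in $t$, so
\begin{align*}
t\,\|\dot x(t)\|^2 \le \int_0^t \|\dot x(s)\|^2\,ds \le \tfrac{1}{2}d_0^2,
\end{align*}
which gives (i). Then (ii) follows directly from the identity $\lambda(t) = \theta/\|\dot x(t)\|$.

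\textbf{Step 3 (weak cluster points lie in $A^{-1}(0)$).} Let $t_n \to +\infty$ with $x(t_n) \rightharpoonup \bar x$. Set $y_n = J_{\lambda(t_n)}x(t_n)$ and $v_n = A_{\lambda(t_n)}x(t_n)$, so $(y_n,v_n) \in \Graph(A)$. Since $y_n - x(t_n) = \dot x(t_n) \to 0$ strongly by (i), we have $y_n \rightharpoonup \bar x$. Moreover $\|v_n\| = \|\dot x(t_n)\|/\lambda(t_n)$ tends to $0$ strongly, since $\|\dot x(t_n)\| \to 0$ and $\lambda(t_n) \to +\infty$ by (ii). The graph of a maximal monotone operator is sequentially closed in the weak-strong topology of $\mathcal H \times \mathcal H$, so $0 \in A(\bar x)$, i.e., $\bar x \in A^{-1}(0)$. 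This supplies the second condition in Opial's lemma, and (iii) follows.

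\textbf{Where the work lies.} Steps 2 and 3 are routine once Step 1 is in hand, so the crux is the Lyapunov computation of Step 1: one must carefully split $x(t)-z = (x(t)-J_{\lambda(t)}x(t)) + (J_{\lambda(t)}x(t)-z)$ in order to exploit monotonicity of $A$ at the point $J_{\lambda(t)}x(t)$ rather than at $x(t)$, since it is $y_n = J_{\lambda(t)}x(t)$, not $x(t)$, that lies in the domain of the relevant inclusion. Everything else is bookkeeping.
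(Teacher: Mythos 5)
Your proof is correct and follows essentially the same route as the paper's: the same Lyapunov function $h_z(t)=\tfrac12\|x(t)-z\|^2$, the same decomposition of $x(t)-z$ through $J_{\lambda(t)}x(t)$ to exploit monotonicity, the same integration combined with the monotonicity of $t\mapsto\|\dot x(t)\|$ to get item (i), the identity $\lambda(t)=\theta/\|\dot x(t)\|$ for (ii), and Opial's lemma with demi-closedness of $\Graph(A)$ for (iii). The only cosmetic difference is that the paper records the explicit bound $\|v(t)\|\le d_0^2/(2\theta t)$ before invoking demi-closedness, whereas you argue $\|v_n\|\to 0$ directly from (i) and (ii); these are equivalent.
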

\begin{proof}
  Define
  \begin{equation}
    \label{eq:dv}
    v(t)=\lambda(t)^{-1}(x(t)-J_{\lambda(t)}x(t)).
  \end{equation}
    Observe that $v(t)\in
  A(J_{\lambda (t)}x(t))$ and $\lambda (t) v(t)+J_{\lambda (t)}x(t) -x(t)=0$.
  For any $z \in A^{-1} (0)$, and any $t\geq 0$ set
  \begin{equation}
    \label{eq:dhz}
    h_z (t) :=	\frac{1}{2} \| x(t) - z\|^2.
  \end{equation}
  After derivation of $h_z $, and using the differential relation in \eqref{eq:edomm-cauchy} we obtain
  \begin{align}\label{basic-rate}
    \dot{h}_z (t) &=  \left\langle  x(t) - z,  \dot{x} (t)   \right\rangle\\
    & = - \left\langle x(t) - z, x(t)- J_{\lambda (t)}x(t) \right\rangle=
    -\norm{x(t) - J_{\lambda (t)}x(t)}^2-\inner{J_{\lambda (t)}x(t) -z}{\lambda (t)v(t)} \label{strong}.
  \end{align}
  Since $v(t)\in A(J_{\lambda (t)}x(t))$, $0\in A(z)$, and $A$ is (maximal) monotone
  \begin{equation}
    \label{eq:hd}
    \dot h_z(t)\leq-\norm{x(t)-J_{\lambda (t)}x(t)}^2 .
  \end{equation}
  Hence, $h_z$ is non-increasing. Moreover, by integration of \eqref{eq:hd},   for any $t>0$
  \begin{align*}
    \frac{1}{2}\norm{z-x(0)}^2\geq h_z(0)-h_z(t)&=-\int_0^t\dot h_z(u)du \\
    &\geq\int_0^t\norm{J_{\lambda(u)}x(u)-x(u)}^2du\geq t\norm{J_{\lambda(t)}x(t)-x(t)}^2
  \end{align*}
  where the last inequality follows from
  $t \mapsto \norm{J_{\lambda(t)}x(t)-x(t)}$ being non-increasing (see
  Theorem \ref{th:main.1}, $ii)$). Item $i)$ follows trivially from
  the above inequality. Item $ii)$ follows from item $i)$ and
  the algebraic relation between $x$ and $\lambda$ in
  \eqref{eq:edomm-cauchy}. To prove item $iii)$, we use
  Lemma~\ref{Opial} with $S=A^{-1}(0)$. Since $z$ in \eqref{eq:dhz} is a
  generic element of $A^{-1}(0)$, it follows from \eqref{eq:hd} that
  item {\it (i)} of Lemma~\ref{Opial} holds. Let us now prove that item
  {\it (ii)} of Lemma~\ref{Opial} also holds. Let $x_{\infty} $ be a
  weak sequential cluster point of the orbit $x(\cdot)$. Since
  $\norm{x(t) -J_{\lambda(t)}x(t)}\to 0$ as $t\to\infty$, we also have
  that $x_{\infty} $ is a weak sequential cluster point of
  $J_{\lambda (\cdot)} x(\cdot)$. Now observe that in view of items $i)$
   and $ii)$, for any $t>0$
  \begin{equation}
    \label{eq:to.be.used}
    \norm{v(t)}\leq\frac{d_0^2}{2\theta t}.
  \end{equation}
  Hence, $v(t)$ converges strongly to zero as $t$ tends to infinity.
  Since $v(t)\in A(J_{\lambda(t)}x(t))$, and the graph of $A$ is
  demi-closed, we obtain $0 \in A (x_{\infty})$, i.e.,
  $x_{\infty} \in S$.
\end{proof}

\subsection{Superlinear convergence under an error bound assumption}

In this section, we assume that the solution set $S=A^{-1}(0)$ is non-empty and that,
whenever $v\in A(x)$ is ``small'', its norm provides a
bound for the distance of $x$ to $S$. Precisely,

\smallskip

\textbf{A0)} \ $S=A^{-1}(0)$ is non-empty, and there exists $\varepsilon,\kappa>0$ such that
\begin{align*}
  v\in A(x),\;\norm{v}\leq \varepsilon\Longrightarrow  
  d(x,S) \leq \kappa \norm{v}.
\end{align*}
\begin{theorem} \label{thm-error-bound}
 Assuming {\rm \textbf{A0)}}, then $x(t)$ converges strongly to some  $x^*\in A^{-1}(0)$, and for any $\alpha\in(0,1)$ there exist positive reals
  $c_0,c_1,c_2,c_3$ such that  
  \begin{align*}
   d(x(t),S)\leq c_0 e^{-\alpha t},\;
    \lambda(t)\geq c_1 e^{\alpha t},\;
    \norm{v (t)}\leq c_2 e^{-2\alpha t},\;
    \norm{x (t)-x^*}\leq c_3 e^{-\alpha t}.
  \end{align*}
\end{theorem}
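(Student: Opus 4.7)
The plan is to leverage Theorem~\ref{th:main.2}, which already gives $\|v(t)\|\to 0$, $\lambda(t)\to+\infty$, and weak convergence to some point in $A^{-1}(0)$, and combine it with the error bound \textbf{A0)} to obtain exponential rates. The key facts we exploit throughout are the two algebraic identities
\begin{equation*}
 \|x(t)-y(t)\|=\tfrac{\theta}{\lambda(t)},\qquad \|v(t)\|=\tfrac{\theta}{\lambda(t)^2},
\end{equation*}
where $y(t)=J_{\lambda(t)}x(t)$ and $v(t)=\lambda(t)^{-1}(x(t)-y(t))\in A(y(t))$ from \eqref{eq:dv}. In particular $\|v(t)\|\to 0$, so by \textbf{A0)} there exists $T_0>0$ such that $d(y(t),S)\leq \kappa\|v(t)\|$ for all $t\geq T_0$; combined with the triangle inequality, this gives the fundamental estimate
\begin{equation*}
d(x(t),S)\;\leq\;\|x(t)-y(t)\|+d(y(t),S)\;\leq\;\frac{\theta}{\lambda(t)}\Bigl(1+\frac{\kappa}{\lambda(t)}\Bigr),\qquad t\geq T_0.
\end{equation*}

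Next I would set $e(t):=d(x(t),S)^2$ and produce a differential inequality for it. The identity \eqref{basic-rate}--\eqref{eq:hd} gives $\dot h_z(t)\leq -\|x(t)-y(t)\|^2$ for every $z\in S$. Freezing $z=P_S(x(t_0))\in S$ and integrating between $t_0$ and $t_0+h$ yields $\tfrac12\|x(t_0+h)-z\|^2-\tfrac12 e(t_0)\leq -\int_{t_0}^{t_0+h}\|x(s)-y(s)\|^2 ds$, and since $e(t_0+h)\leq \|x(t_0+h)-z\|^2$, we deduce the Dini-type inequality
\begin{equation*}
 D^+ e(t)\;\leq\;-2\|x(t)-y(t)\|^2\;=\;-\frac{2\theta^2}{\lambda(t)^2}.
\end{equation*}
The bound above inverts into $\theta/\lambda(t)\geq \sqrt{e(t)}/(1+\kappa/\lambda(t))$, hence
\begin{equation*}
 D^+ e(t)\;\leq\;-\frac{2\,e(t)}{\bigl(1+\kappa/\lambda(t)\bigr)^2},\qquad t\geq T_0.
\end{equation*}

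Given any $\alpha\in(0,1)$, since $\lambda(t)\to+\infty$ we can choose $T\geq T_0$ with $(1+\kappa/\lambda(t))^{-2}\geq \alpha$ for $t\geq T$, so $D^+ e(t)\leq -2\alpha\,e(t)$. A standard Gronwall argument for continuous functions with a one-sided Dini bound gives $e(t)\leq e(T)e^{-2\alpha(t-T)}$, that is $d(x(t),S)\leq c_0 e^{-\alpha t}$. For the rate on $\lambda$, I use that $J_{\lambda(t)}$ fixes every $z\in S$ and is nonexpansive, so $\|x(t)-y(t)\|\leq 2\,d(x(t),S)$, giving $\lambda(t)\geq (\theta/(2c_0))e^{\alpha t}=:c_1 e^{\alpha t}$. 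The bound on $\|v(t)\|$ is then immediate from $\|v(t)\|=\theta/\lambda(t)^2\leq (\theta/c_1^2)e^{-2\alpha t}$.

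Finally, strong convergence follows from $\|\dot x(t)\|=\theta/\lambda(t)\leq (\theta/c_1)e^{-\alpha t}$, which is integrable on $[0,\infty)$; hence $x(\cdot)$ is Cauchy and
\begin{equation*}
 \|x(t)-x(s)\|\;\leq\;\int_t^s\|\dot x(u)\|\,du\;\leq\;\frac{\theta}{c_1\alpha}e^{-\alpha t}\qquad (s\geq t),
\end{equation*}
so $x(t)$ converges strongly to some $x^*$ and $\|x(t)-x^*\|\leq c_3 e^{-\alpha t}$. The weak limit from Theorem~\ref{th:main.2}\,$iii)$ must coincide with this strong limit, so $x^*\in A^{-1}(0)$. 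The main technical point is justifying the Dini/Gronwall step on the non-smooth function $e(t)=d(x(t),S)^2$, which is handled by the ``frozen projection'' trick above rather than by trying to differentiate the distance function directly.
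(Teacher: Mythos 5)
Your proof is correct and follows essentially the same route as the paper's: error bound $\Rightarrow$ $d(y(t),S)\leq\kappa\|v(t)\|$ for $t$ large, triangle inequality, a Gronwall-type differential inequality for $d(x(t),S)^2$, and then the rates on $\lambda$, $v$, and $\|x(t)-x^*\|$ fall out from the algebraic identities $\|x-y\|=\theta/\lambda$ and $\|v\|=\theta/\lambda^2$. The one place you deviate is in how the differential inequality for $e(t)=d(x(t),S)^2$ is established. The paper differentiates $g(t)=\tfrac12 d^2(x(t),S)=\tfrac12\|x(t)-P_S x(t)\|^2$ directly via the chain rule (which is legitimate because $d^2(\cdot,S)$ is $C^1$ on a Hilbert space with $\nabla d^2(\cdot,S)=2(I-P_S)$) and then uses monotonicity of $A$ to get $g'(t)\leq-\|x(t)-y(t)\|^2$, whereas you avoid invoking any smoothness of the distance function by freezing $z=P_S(x(t_0))$ in the already-established inequality \eqref{eq:hd} for $h_z$, integrating, and passing to an upper Dini derivative. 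Your version is more elementary and self-contained (it needs nothing beyond \eqref{eq:hd} and a Dini-Gronwall lemma), at the cost of a slightly longer argument; the paper's is shorter but implicitly relies on the $C^1$ regularity of the squared distance. A minor difference with no consequence: you bound $\|x(t)-y(t)\|\leq 2\,d(x(t),S)$ via nonexpansiveness of $J_{\lambda}$, while the paper uses the sharper $\|x(t)-y(t)\|\leq d(x(t),S)$ from the $\tfrac1\lambda$-Lipschitz continuity of $A_\lambda$; the factor $2$ is absorbed into $c_1$. Both proofs obtain the claimed exponential rates after a harmless relabeling of the arbitrary $\alpha\in(0,1)$.
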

\begin{proof} 
Let $P_S$ be the projection on the closed convex set $S=A^{-1}(0)$.
  Define, for $t\geq 0$,
  \begin{align*}
    x^*(t)=P_S(x(t)),\qquad y^*(t)=P_S(y(t)).
  \end{align*}
  It follows from the assumption $A^{-1}(0)\neq\emptyset$, and from 
  \eqref{eq:to.be.used} (inside the proof of Theorem~\ref{th:main.2}) that
  $\lim_{t\to\infty}v(t)=0$.  By \textbf{A0)}, and $v(t)=\lambda(t)^{-1}(x(t)-y(t)) \in A (y(t))$, we have that, for $t$ large enough, say $t\geq t_0$
  \begin{align}
 d(y(t),S)=   \norm{y(t)-y^*(t)}\leq \kappa\norm{v(t)}.
  \end{align}
 Hence
  \begin{align*}
    \norm{x(t)- x^*(t)}
    \leq \norm{x(t)-y^*(t)}
    &\leq\norm{x(t)-y(t)}+\norm{y(t)-y^*(t)} \\
    & \leq \norm{x(t)-y(t)}+\kappa\norm{v(t)}\\
    &=\norm{x(t)-y(t)}\left(1+\dfrac{\kappa}{\lambda (t)}\right) .
  \end{align*}
 Take $\alpha\in(0,1)$. Since $\lambda(t)\to\infty$ as $t\to\infty$,
   for $t$ large enough
   \begin{align}\label{eq:errorbound1}
  \norm{x(t) - x^*(t)}\leq \alpha^{-1}\norm{x(t) - y(t)} .
   \end{align}
  Define
  \begin{align*}
    g(t):=\frac{1}{2} d^2(x(t),S)   =   \frac{1}{2}\norm{x(t)-x^*(t)}^2.
  \end{align*}
Using successively the classical derivation chain  rule, and 
\eqref{eq:edomm}, we obtain
  \begin{align*}
g'(t)&= \left\langle  x(t) - x^*(t), \dot{x} (t)\right\rangle\\
 &= -\left\langle  x(t) - x^*(t), x(t) -y(t)\right\rangle \\  
  &= -\norm{x(t)-y(t)}^2   - \left\langle  y(t) - x^*(t), x(t) -y(t)\right\rangle . 
   \end{align*}  
By the monotonicity of $A$, and    $\lambda(t)^{-1}(x(t)-y(t)) \in A (y(t))$, $0 \in A (x^*(t))$, we have
\begin{align*}
    \left\langle  y(t) - x^*(t), x(t) -y(t)\right\rangle \geq 0.
  \end{align*}   
Combining the two above inequalities, we obtain   
\begin{align}\label{eq:errorbound2}
g'(t)\leq -\norm{x(t)-y(t)}^2   . 
   \end{align}    
From \eqref{eq:errorbound1}, \eqref{eq:errorbound2}, and the definition of $g$, we infer
  \begin{align*}
    g'(t)\leq -2{\alpha}^2 g(t),
  \end{align*}
  and it follows from Gronwall's lemma that $g(t)\leq ce^{-2{\alpha}^2 t}$,
  which proves the first inequality.\\
   To prove the second inequality, we use the inequality 
  \begin{align}\label{eq:errorbound3}
  \norm{x(t)-y(t)} \leq d(x(t),S)
   \end{align} 
   which is a direct consequence of the $\frac{1}{\lambda}$-Lipschitz continuity of $A_{\lambda}$. For $z\in S$, since $A_{\lambda (t)}z=0$
 $$
 \norm{A_{\lambda (t)} x(t)} = \norm{A_{\lambda (t)} x(t)- A_{\lambda (t)}z }    \leq \frac{1}{\lambda (t)}\norm{x(t)-z} .
 $$ 
Equivalently,   $\norm{x(t)-y(t)} \leq \norm{x(t)-z}$ for all $z\in S$, which gives \eqref{eq:errorbound3}.  
Then  use the first inequality,  and the
  equality $\lambda(t)\norm{x(t)-y(t)}=\theta$, and so obtain the second inequality.\\
   The third inequality
  follows from the second one, and the equality
  $\lambda(t)^2\norm{v(t)}=\theta$.\\
   To prove the last inequality,
  observe that for $t_1<t_2$,
  \begin{align*}
    \norm{x(t_2)-x(t_1)}\leq\int_{t_1}^{t_2}\norm{\dot x (t)}dt =\int_{t_1}^{t_2}\norm{x (t)- y (t)}dt
    \leq \int_{t_1}^{t_2} d(x(t),S)dt
  \end{align*}
where the last inequality comes from \eqref{eq:errorbound3},  and the strong convergence of $x(t)$, as well as the last inequality follows.
\end{proof}

\begin{remark} \rm{
In the Appendix, in the case of an isotropic linear monotone operator, we can perform an explicit computaion of $x$, $\lambda$, and observe that their rate of convergence  are in accordance with the conclusions of Theorem \ref{thm-error-bound}.}
\end{remark}

\subsection{Weak versus strong convergence}
A famous counterexample due to Baillon \cite{Ba} shows that the trajectories of the steepest descent dynamical system associated to a convex potential can converge weakly but not strongly. The existence of such a counterexample for  \eqref{eq:edomm} is an interesting open question, whose study goes beyond this work. In the following theorem, we provide some practically important situations where the strong convergence holds for  system \eqref{eq:edomm}.

\begin{theorem} \label{thm-strg-conv}
 Assuming $S=A^{-1}(0)$ is non-empty, then $x(t)$ converges strongly to some  $x^*\in A^{-1}(0)$, in the following situations:

 i)  $A$ is strongly monotone;
 
 ii) $A = \partial f$, where $f: \mathcal H \to \mathbb R \cup + \left\{\infty\right\}$ is a proper closed convex function, which is boundedly inf-compact;
 
 iii) $S = A^{-1} (0)$ has a nonempty interior. 

\end{theorem}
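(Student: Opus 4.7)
The plan is to handle the three cases by separate arguments, all of which ultimately reduce to a strong-convergence property that transfers to $x(t)$ via the relation $\norm{x(t)-y(t)}=\theta/\lambda(t)\to 0$ established in Theorem~\ref{th:main.2}, where $y(t):=J_{\lambda(t)}x(t)$. Consequently, if either $y$ converges strongly or $\dot x\in L^1$, the theorem follows at once.

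For case $(i)$, I would observe that strong monotonicity of $A$ with modulus $\alpha>0$ forces $S=\{x^*\}$ to be a singleton, and for every $v\in A(x)$ one has $\inner{v}{x-x^*}\geq \alpha\norm{x-x^*}^2$, hence $\norm{x-x^*}\leq \norm{v}/\alpha$; thus assumption \textbf{A0)} holds globally with $\kappa=1/\alpha$. The conclusion, even with an exponential rate, is then a direct corollary of Theorem~\ref{thm-error-bound}.

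For case $(ii)$, I first note that $x(t)$ is Fej\'er-monotone with respect to $S$ (from inequality~\eqref{eq:hd}) and therefore bounded, and so is $y(t)$ since $\norm{x(t)-y(t)}\to 0$. Using $v(t)\in\partial f(y(t))$ with $v(t)\to 0$ (recall~\eqref{eq:to.be.used}) and the subgradient inequality applied at any $z\in S=\text{argmin}\,f$, I get $f(y(t))\leq\inf f+\norm{v(t)}\,\norm{y(t)-z}\to\inf f$. Bounded inf-compactness of $f$ then yields relative strong compactness of $\{y(t)\}$; demi-closedness of $\Graph(\partial f)$ places every strong cluster point of $y(t)$ in $S$, while the weak convergence $y(t)\rightharpoonup x_\infty$ inherited from Theorem~\ref{th:main.2} forces that cluster point to be unique and equal to $x_\infty$. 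Hence $y(t)\to x_\infty$ strongly, and then so does $x(t)$.

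For case $(iii)$, the plan is a Bruck-type argument. Pick $z_0\in\text{int}(S)$ and $r>0$ with $B(z_0,r)\subset S$. By Theorem~\ref{th:main.2}, $\norm{x(t)-z}$ is non-increasing for every $z\in S$; differentiating $\tfrac{1}{2}\norm{x(t)-z_0-re}^2$ at a unit vector $e$ gives $r\inner{e}{\dot x(t)}\geq \inner{x(t)-z_0}{\dot x(t)}=\tfrac{1}{2}\tfrac{d}{dt}\norm{x(t)-z_0}^2$. Specializing $e=-\dot x(t)/\norm{\dot x(t)}$ (whenever $\dot x(t)\neq 0$) produces the pointwise bound $\norm{\dot x(t)}\leq -\tfrac{1}{2r}\tfrac{d}{dt}\norm{x(t)-z_0}^2$, and integration over $[0,+\infty[$ yields $\int_0^{+\infty}\norm{\dot x(t)}\,dt\leq \norm{x_0-z_0}^2/(2r)<+\infty$. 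So $\dot x\in L^1(0,+\infty;\mathcal H)$, $x(t)$ is strongly Cauchy, and the weak convergence from Theorem~\ref{th:main.2} upgrades to strong convergence.

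I expect the main obstacle to be case $(iii)$: one has to exploit simultaneously the whole ball of reference points in $S$ and then maximize over unit directions in order to turn the pointwise Fej\'er inequalities into an $L^1$-bound on $\dot x$. Case $(ii)$ is comparatively routine once one invokes the demi-closedness of $\partial f$ and the subgradient inequality, and case $(i)$ is essentially a corollary of the preceding error-bound theorem.
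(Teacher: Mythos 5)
Your proof is correct and follows essentially the same route as the paper: case (i) reduces to the error-bound Theorem~\ref{thm-error-bound}; case (ii) uses bounded inf-compactness to upgrade weak convergence to strong; case (iii) is the Bruck-type interior-of-$S$ argument giving an $L^1$ bound on $\dot x$. The only noteworthy variation is in case (ii): the paper works directly with the level sets of $f$ along $x(\cdot)$ (invoking Corollary~\ref{cr:dfx}, which tacitly needs $f(x_0)<+\infty$), whereas you run the compactness argument on $y(t)=J_{\lambda(t)}x(t)$ and transfer back via $\|x(t)-y(t)\|=\theta/\lambda(t)\to 0$; this sidesteps that implicit finiteness assumption, since $y(t)\in\dom(\partial f)$ automatically, but the two arguments are otherwise interchangeable.
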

\begin{proof}
$i)$ If $A^{-1}$ is Lipschitz continuous at $0$, then  assumption \textbf{A0)} holds, and, by Theorem \ref{thm-error-bound}, each trajectory $x(t)$ of \eqref{eq:edomm} converges strongly to some  $x^*\in A^{-1}(0)$. In particular, if $A$ is strongly monotone, i.e.,
there exists a positive constant $\alpha$ such that for any $y_i \in Ax_i$, $i=1,2$
$$
 \left\langle y_2 - y_1,  x_2 -x_1 \right\rangle \geq \alpha   \|  x_2 - x_1 \|^2,
$$
then $A^{-1}$ is Lipschitz continuous.
In that case, $A^{-1} (0)$ is reduced to a single element $z$, and each trajectory $x(t)$ of \eqref{eq:edomm}  converges strongly to $z$, with the rate of convergence given by Theorem \ref{thm-error-bound}.

$ii)$ $A = \partial f$, where $f: \mathcal H \to \mathbb R \cup + \left\{\infty\right\}$ is a proper closed convex function, which is supposed to be boundedly inf-compact,
 i.e., for any $R>0$ and $l\in \mathbb R$,
  $$\left\{ x\in \mathcal H:  \ f(x)\leq l,  \mbox{ and }  \|x \| \leq R \right\}  \  \mbox{ is relatively compact in} \  \mathcal H.$$
By Corollary \ref{cr:dfx}, \ $ t \mapsto f(x(t))$ is non-increasing, and $x(\cdot)$ is contained in a sublevel set of $f$. Thus, the orbit  $x(\cdot)$ is relatively compact, and converges weakly.
Hence, it converges strongly.

$iii)$ Suppose now that $S = A^{-1} (0)$ has a nonempty interior. Then there $r >0$ and $p \in  A^{-1} (0)$ such that the ball $B (p,r)$ of radius $r$ centered at $p$ is contained in $S$. For any given  $\lambda >0$, we have
$
A^{-1} (0) = A_{\lambda}^{-1} (0).
$
Hence, for any  $\lambda >0$, we have $B(p,r) \subset A_{\lambda}^{-1} (0)$.
By the monotonicity property of $A_{\lambda}$, for any $\xi \in \mathcal H$, $\lambda >0$, and $h \in \mathcal H$ with $\|h\| \leq 1$,
$$
\left\langle  A_{\lambda}(\xi ), \xi -( p + rh)\right\rangle \geq 0 .
$$
Hence
 \begin{equation}\label{eq:interior1}
r \| A_{\lambda}(\xi ) \| = r \sup_{\| h \| \leq 1} 
\left\langle  A_{\lambda}(\xi ),  h)\right\rangle
\leq \left\langle  A_{\lambda}(\xi ), \xi - p )\right\rangle .
 \end{equation}
The edo \eqref{eq:edomm} can be written as
$ \dot x(t) +  \lambda(t) A_{\lambda (t)}x(t) =0  $.
Taking $\lambda = \lambda (t)$, and $\xi = x(t)$ in \eqref{eq:interior1}, we obtain 
$$
\|\dot x(t)\| = \lambda(t) \| A_{\lambda (t)}(x(t) ) \| \leq \frac{\lambda(t)}{r}\left\langle  A_{\lambda (t)}(x(t) ), x(t) - p )\right\rangle 
 $$
Using again \eqref{eq:edomm} we obtain
 \begin{equation}\label{eq:interior2}
\|\dot x(t)\|
\leq - \frac{1}{r}\left\langle  \dot x(t), x(t) - p )\right\rangle .
 \end{equation}
The end of the proof follows standard arguments, see for example \cite[Proposition 60]{PS}. Inequality \eqref{eq:interior2} implies, for any $0 \leq s \leq t$
\begin{align*}
\| x(t) - x(s)\| &\leq \int_s^t  \|\dot x(\tau)\| d\tau \\
&\leq - \frac{1}{r}\int_s^t  \left\langle  \dot x(\tau), x(\tau) - p \right\rangle d\tau\\
&\leq \frac{1}{2r} ( \| x(s)-p\|^2 - \| x(t)-p\|^2 ).
\end{align*}
 By Theorem \ref{th:main.2} $iii)$,  $ \| x(t) -p \|$ is convergent. As a consequence, the trajectory $x(\cdot)$ has the Cauchy property in the Hilbert space $\mathcal H$, and hence converges strongly.
\end{proof}

\section{A link with the regularized Newton system}
\label{sec:lk}


In this section, we  show how the dynamical system~\eqref{eq:edomm} is
linked with the regularized Newton system proposed and analyzed in
 \cite{AAS}, \cite{ARS}, \cite{AS}. Given $x_0\notin A^{-1} (0)$, let
 $(x,\lambda):[0,+ \infty[\to {\mathcal H}\times \R_{++}$  be the unique global solution   of
  the Cauchy problem \eqref{eq:edomm-cauchy}. For any $t\geq 0$ \ define
\begin{align}
  \label{eq:lk0}
  y(t)=(I + \lambda(t) {A})^{-1}x(t),\qquad v(t)=\frac{1}{\lambda (t)}(x(t)-y(t)).
\end{align}
We are going to show that $y(\cdot)$ is solution of a regularized Newton system. For proving this result, we first establish some further properties satisfied by $y(\cdot)$.
\begin{proposition}
  \label{pr:oode}
  For $y(\cdot)$ and $v(\cdot)$ as defined in \eqref{eq:lk0} it holds that
  
  \smallskip
  
 i)   $v(t)\in Ay(t)$, $\lambda (t)  v(t)+y(t)-x(t)=0$, and $\dot x (t)=y(t)-x(t)$ \ for all $t\geq 0$;
 
   \smallskip

ii) $v(\cdot)$ and $y(\cdot)$ are locally Lipschitz continuous;

  \smallskip

iii)    $\dot y (t) +\lambda (t) \dot v (t)+(\lambda (t)+\dot\lambda (t))v(t)=0$  \ for almost all $t\geq 0$;

  \smallskip

iv) $\inner{\dot y (t)}{\dot v (t) }\geq 0$ and $\inner{\dot y (t)}{v (t)}\leq 0$ \ for almost all $t\geq 0$;

  \smallskip

 v)   $\norm{v (\cdot)}$ is non-increasing.

\end{proposition}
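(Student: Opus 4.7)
The plan is to establish the five items in order, leaning on the algebraic constraint of \eqref{eq:edomm-cauchy} together with the regularity of $x(\cdot)$ and $\lambda(\cdot)$ from Theorem~\ref{th:main.1} and the resolvent identities of Proposition~\ref{resolv. cont}.

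Item $i)$ is obtained by unfolding the definition of the resolvent: $y(t)=(I+\lambda(t)A)^{-1}x(t)$ is equivalent to $x(t)\in y(t)+\lambda(t)A(y(t))$, which directly gives $v(t)\in A(y(t))$ and $\lambda(t)v(t)+y(t)-x(t)=0$. The identity $\dot{x}(t)=y(t)-x(t)$ is a rewriting of the first equation in \eqref{eq:edomm-cauchy}.

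For item $ii)$, I would fix $[0,T]$. By Theorem~\ref{th:main.1}, $\lambda(\cdot)$ is non-decreasing and satisfies $\lambda(0)\leq\lambda(t)\leq\lambda(0)e^{T}$, so $\lambda(\cdot)$ is locally Lipschitz and bounded away from $0$. Since $x(\cdot)$ is $\mathscr{C}^1$, writing
\[
\|y(t)-y(s)\|\leq\|J_{\lambda(t)}x(t)-J_{\lambda(t)}x(s)\|+\|J_{\lambda(t)}x(s)-J_{\lambda(s)}x(s)\|,
\]
applying nonexpansiveness of $J_{\lambda(t)}$ to the first term and the estimate \eqref{local.lip2} with $\delta=\lambda(0)$ to the second yields the local Lipschitz continuity of $y(\cdot)$. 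Then $v=(x-y)/\lambda$ is locally Lipschitz since $\lambda$ is bounded away from $0$.

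For item $iii)$, all three of $y$, $v$, $\lambda$ being locally Lipschitz implies they are differentiable almost everywhere, so the identity $\lambda(t)v(t)+y(t)-x(t)=0$ of $i)$ may be differentiated. Substituting $\dot{x}(t)=y(t)-x(t)=-\lambda(t)v(t)$ produces the stated equation.

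For item $iv)$, the first inequality is a consequence of the monotonicity of $A$: since $v(t),v(s)\in A(y(t)), A(y(s))$ respectively, $\inner{y(t)-y(s)}{v(t)-v(s)}\geq 0$; dividing by $(t-s)^{2}>0$ and letting $s\to t$ at a common point of differentiability gives $\inner{\dot{y}(t)}{\dot{v}(t)}\geq 0$. For the second inequality, I would take the inner product of the identity in $iii)$ with $\dot{y}(t)$:
\[
\|\dot{y}(t)\|^{2}+\lambda(t)\inner{\dot{y}(t)}{\dot{v}(t)}+(\lambda(t)+\dot{\lambda}(t))\inner{\dot{y}(t)}{v(t)}=0.
\]
By what was just proved, $\inner{\dot{y}(t)}{\dot{v}(t)}\geq 0$, and by Lemma~\ref{lm:lnond} we have $\dot{\lambda}(t)\geq 0$, so $\lambda(t)+\dot{\lambda}(t)>0$, and hence $\inner{\dot{y}(t)}{v(t)}\leq 0$.

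Finally, item $v)$ is essentially free: by the second equation of \eqref{eq:edomm-cauchy} together with $v(t)=(x(t)-y(t))/\lambda(t)$, one gets $\|v(t)\|=\theta/\lambda(t)^{2}$, which is non-increasing because $\lambda(\cdot)$ is non-decreasing. None of these five arguments is truly delicate; the only subtlety worth flagging is the standard approximation step converting the monotonicity inequality into its pointwise differential form in $iv)$, which uses Rademacher-type almost-everywhere differentiability valid for locally Lipschitz curves into a Hilbert space.
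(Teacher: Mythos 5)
Your proof is correct and follows essentially the same route as the paper's: item $i)$ by unfolding the resolvent, item $ii)$ from the local Lipschitz continuity of $\lambda(\cdot)$ together with the resolvent estimates, item $iii)$ by differentiating the identity from $i)$ almost everywhere, item $iv)$ via the monotonicity difference quotient and then taking the inner product of $iii)$ with $\dot y(t)$ using $\dot\lambda\geq 0$, and item $v)$ from $\lambda(t)^2\norm{v(t)}=\theta$ and the monotonicity of $\lambda(\cdot)$. The only difference is that you spell out the details of item $ii)$ (the triangle-inequality split and the use of \eqref{local.lip2}) which the paper leaves implicit, so this is a faithful, slightly more explicit rendition of the same argument.
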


\begin{proof}
  Item $i)$ follows trivially from \eqref{eq:lk0} and \eqref{eq:edomm}.
  Item $ii)$ follows from the local Lipschitz continuity of $\lambda$, and
  the properties of the resolvent, see Proposition \ref{resolv. cont}.
  Hence $x,y, \lambda,v$ are differentiable almost everywhere. By differentiating
  $\lambda v+y-x=0$, and using	$\dot x=y-x$, we obtain item $iii)$.
  To prove item $iv)$, assume that $y$
  and $v$ are differentiable at $t \geq 0$.  It follows from the
  monotonicity of $A$ and the first relation in item $iv)$ that
  if $t' \neq t$ and $t' \geq 0$
  \begin{align*}
    \dfrac{\inner{y(t')-y(t)}{v(t')-v(t)}}{(t'-t)^2}\geq 0.
  \end{align*}
  Passing to the limit as $t'\to t$ in the above
  inequality, we conclude that the first inequality in item $iv)$ holds.  To prove the last inequality, assume that $\lambda (\cdot)$ is also
  differentiable at $t$. Using item $iii)$, after scalar multiplication by $\dot y (t)$, we obtain
  \[
  \norm{\dot y (t)}^2+\lambda (t)\inner{\dot y (t)}{\dot
    v (t)}+(\lambda (t)+\dot\lambda (t))\inner{\dot y (t)}{v(t)}=0.
  \]
  To end the proof of item $iv)$, note that  $\dot \lambda(t)\geq 0$ (by Theorem \ref{th:main.1}, $ii)$, $\lambda (\cdot)$
  is non-decreasing), and use the first
  inequality of item $iv)$.
  In view of \eqref{eq:lk0} and \eqref{eq:edomm},
  $\lambda^2 (t)\norm{v (t)}=\theta$ for all $t \geq0$. This result, together with
  Lemma~\ref{lm:lnond} proves item $v)$.
\end{proof}
Hence (almost everywhere) $y(\cdot)$ and $v (\cdot)$, as defined in \eqref{eq:lk0}, satisfy
the differential inclusion
\begin{equation}\label{{eq:old}}
\left\{
\begin{array}{l}
v (t)\in {A}y(t);    \\
\rule{0pt}{20pt}
\dot y (t)+\lambda (t)\dot v (t)+(\lambda (t)+\dot\lambda (t))v(t)=0.
\end{array}\right.
\end{equation}
Recall that $\lambda (\cdot)$ is locally absolutely continuous, and satisfies almost everywhere
\[
0\leq\dot \lambda (t) \leq \lambda (t).
\]
Let us consider the time rescaling defined by
\begin{align}
  \label{eq:tau}
  \tau(t)=\int_0^t\dfrac{\lambda(u)+\dot\lambda(u)}{\lambda(u)}du=
 t+\ln (\lambda(t)/\lambda(0)).
\end{align}
Since $ 1 \leq \dfrac{\lambda(u)+\dot\lambda(u)}{\lambda(u)} \leq  2 $,       we have $t \leq \tau(t) \leq 2t$.
Hence $t\mapsto  \tau(t)$ is a monotone  function which increases from $0$ to $+\infty$ as $t$ grows from  $0$ to $+\infty$. The link with  the regularized Newton system  is made precise in the following statement.
\begin{theorem} \label{thm-reg-Newton}  For $y(\cdot)$ and $v(\cdot)$ as defined in \eqref{eq:lk0},
let us set \ $y(t) = \tilde{y}(\tau (t)), \quad  v(t) = \tilde{v}(\tau (t))$,
where the time rescaling is given by  $\tau(t)= \int_0^t\dfrac{\lambda(u)+\dot\lambda(u)}{\lambda(u)}du$.
Then, $ (\tilde{y}, \tilde{v})$ is solution of the regularized Newton system
\begin{equation}\label{eq:lk2}
\left\{
\begin{array}{l}
\tilde{v}\in {A}\tilde{y};    \\
\rule{0pt}{20pt}
\frac{1}{\lambda \circ {\tau}^{-1}} \dfrac{d}{d\tau}\tilde{y} + \dfrac{d}{d\tau}\tilde{v} +\tilde{v} =0.
\end{array}\right.
\end{equation}
\end{theorem}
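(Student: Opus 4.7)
The plan is to feed the differential relation
\[
\dot y(t) + \lambda(t)\dot v(t) + (\lambda(t)+\dot\lambda(t))v(t) = 0
\]
from Proposition~\ref{pr:oode}(iii) through the prescribed time rescaling, and simply recognize the regularized Newton equation. Before doing so, one must check that $\tau$ is a legitimate change of variables. Since $0\leq \dot\lambda(t)\leq \lambda(t)$ by Theorem~\ref{th:main.1}, one has $1\leq \dot\tau(t)=(\lambda(t)+\dot\lambda(t))/\lambda(t)\leq 2$ for almost all $t\geq 0$, so $\tau:[0,+\infty[\,\to\,[0,+\infty[$ is a strictly increasing bi-Lipschitz bijection (its inverse having derivative in $[1/2,1]$ almost everywhere). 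Because $y$ and $v$ are locally Lipschitz continuous by Proposition~\ref{pr:oode}(ii), their compositions $\tilde y = y\circ\tau^{-1}$ and $\tilde v = v\circ\tau^{-1}$ are also locally Lipschitz continuous, hence differentiable almost everywhere.

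The inclusion $\tilde v(s)\in A\tilde y(s)$ for every $s\geq 0$ is immediate: fixing $t=\tau^{-1}(s)$ in $v(t)\in Ay(t)$ from Proposition~\ref{pr:oode}(i) is exactly the desired relation. For the differential equation, at any $t$ where $\tau$, $y$ and $v$ are differentiable and where $\tilde y$, $\tilde v$ are differentiable at $\tau(t)$ (a set of full measure, thanks to the bi-Lipschitz property of $\tau$), the chain rule yields
\[
\dot y(t) = \dot\tau(t)\,\frac{d\tilde y}{d\tau}(\tau(t)),\qquad \dot v(t) = \dot\tau(t)\,\frac{d\tilde v}{d\tau}(\tau(t)).
\]
Plugging these into the identity from Proposition~\ref{pr:oode}(iii), using $\dot\tau(t)=(\lambda(t)+\dot\lambda(t))/\lambda(t)$, and dividing by $\lambda(t)+\dot\lambda(t)>0$ gives
\[
\frac{1}{\lambda(t)}\,\frac{d\tilde y}{d\tau}(\tau(t)) + \frac{d\tilde v}{d\tau}(\tau(t)) + \tilde v(\tau(t)) = 0.
\]
Writing $s=\tau(t)$, so that $\lambda(t)=(\lambda\circ\tau^{-1})(s)$, this is exactly the second line of \eqref{eq:lk2}.

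The only mildly delicate point is the measure-theoretic bookkeeping under the change of variables: one must know that the null set excluded on the $t$-side gets mapped to a null set on the $s$-side, and that the chain rule applies for the rescaled functions at almost every $s$. Both are guaranteed by the bi-Lipschitz nature of $\tau$ (Lusin's $N$-property in dimension one). Every other step reduces to the short algebraic manipulation above applied to the a.e.\ identity already established in Proposition~\ref{pr:oode}(iii).
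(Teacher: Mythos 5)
Your proposal is correct and follows the same route the paper leaves implicit: the theorem is intended to follow directly from the differential inclusion derived from Proposition~\ref{pr:oode}(iii) after the change of variable $s=\tau(t)$, which is precisely the chain-rule computation you carry out. The only difference is that you make explicit the measure-theoretic justification (bi-Lipschitz change of variables preserving null sets), which the paper glosses over; that is a welcome clarification rather than a different argument.
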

That's the regularized Newton system which has been studied in \cite{AS}. The (Levenberg-Marquardt) regularization parameter is equal to $\frac{1}{\lambda \circ {\tau}^{-1}}$. Since $\lambda (t)$ tends to infinity,
the regularization parameter converges to zero as $\tau$ tends to infinity. This makes our system asymptotically close to the Newton method. We may expect fast convergence properties.
That's precisely the subject of the next section. Let us complete this section with the following relation allowing to recover $x$ from $y$.

\begin{lemma}
  \label{lm:cc}
  For any $t_2>t_1\geq 0$,
  \begin{align*}
 x(t_2)& =\int_0^{\Delta t} \left[ (1-e^{-\Delta t})y(t_1+u) +
    e^{-\Delta t}x(t_1)\right]\dfrac{e^u}{e^{\Delta t}-1}\;du.
  \end{align*}
  where $\Delta t=t_2-t_1$.
\end{lemma}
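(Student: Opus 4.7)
The plan is to treat the first equation of \eqref{eq:edomm-cauchy} as a linear ODE in $x$ with $y(t)=J_{\lambda(t)}x(t)$ as a known forcing term, integrate it explicitly, and then algebraically re-express the resulting formula in the form stated in the lemma.

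First I would rewrite the evolution equation using the notation $y(t)=J_{\lambda(t)}x(t)=(I+\lambda(t)A)^{-1}x(t)$ from \eqref{eq:lk0}, so that \eqref{eq:edomm-cauchy} becomes
\[
\dot x(t)+x(t)=y(t),
\]
a scalar-coefficient first-order linear ODE in the Hilbert-space-valued function $x(\cdot)$. Since $y$ is locally Lipschitz by Proposition~\ref{pr:oode}~$ii)$ and $x$ is $\mathscr{C}^1$ by Theorem~\ref{th:main.1}, multiplying by the integrating factor $e^t$ gives $\tfrac{d}{dt}\bigl(e^t x(t)\bigr)=e^t y(t)$, so integrating from $t_1$ to $t_2$ yields
\[
e^{t_2}x(t_2)-e^{t_1}x(t_1)=\int_{t_1}^{t_2}e^s y(s)\,ds.
\]

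Next I would divide by $e^{t_2}$ and perform the change of variable $s=t_1+u$, arriving at
\[
x(t_2)=e^{-\Delta t}x(t_1)+e^{-\Delta t}\int_0^{\Delta t}e^u y(t_1+u)\,du,
\]
with $\Delta t=t_2-t_1$. This is already the closed-form recovery of $x$ from $y$; the remaining work is purely algebraic, consisting of recognizing that $e^{-\Delta t}=(1-e^{-\Delta t})/(e^{\Delta t}-1)$ and that $\int_0^{\Delta t} e^u du = e^{\Delta t}-1$, so that
\[
e^{-\Delta t}x(t_1)=\int_0^{\Delta t}e^{-\Delta t}x(t_1)\,\frac{e^u}{e^{\Delta t}-1}\,du,
\qquad
e^{-\Delta t}\int_0^{\Delta t}e^u y(t_1+u)\,du=\int_0^{\Delta t}(1-e^{-\Delta t})y(t_1+u)\,\frac{e^u}{e^{\Delta t}-1}\,du.
\]
Summing these two identities produces exactly the formula claimed.

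There is no real obstacle here; the only point that might merit a line of justification is that the integrating-factor computation is legitimate, which follows because $x$ is $\mathscr{C}^1$ on $[0,+\infty[$ (Theorem~\ref{th:main.1}) and $y$ is continuous, so $s\mapsto e^s y(s)$ is Bochner-integrable on every compact interval and the fundamental theorem of calculus applies componentwise in the Hilbert space. Everything else is just rearranging the factor $e^{-\Delta t}$ to fit the convex-combination format of the stated integrand.
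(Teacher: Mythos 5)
Your proof is correct and follows essentially the same route as the paper's: rewrite the dynamic as $\dot x + x = y$, integrate with the factor $e^t$, and then algebraically rearrange $e^{-\Delta t}$ into the convex-combination form; the paper simply reduces to $t_1=0$ by a ``WLOG'' at the start, while you carry $t_1$ along explicitly. The algebraic identities you use, $e^{-\Delta t}=(1-e^{-\Delta t})/(e^{\Delta t}-1)$ and $\int_0^{\Delta t}e^u\,du=e^{\Delta t}-1$, are exactly what the paper uses implicitly.
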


\begin{proof}
  It suffices to prove the equality for $t_1=0$ and $t_2=t=\Delta t$.
  Since $\dot x=y-x$, trivially $\dot x+x=y$. So
  \[
  e^tx(t)-x_0=\int_0^te^uy(u)\,du.
  \]
  Whence
  \begin{align*}
    x(t)&=e^{-t}x_0+e^{-t}\int_0^te^uy(u)\;du\\
    &=e^{-t}\int_0^te^u\left[(e^t-1)y(u)+x_0\right]\dfrac{1}{e^t-1}\;du
  = \int_0^t\left[(1-e^{-t})y(u)+e^{-t}x_0\right]\dfrac{e^u}{e^t-1}\;du
  \end{align*}
  which is the desired equality.
\end{proof}

 \section{The subdifferential case}
 \label{sec:sbd}
From now on, in this section, we assume that $A = \partial f$, where $f: \mathcal H \to \mathbb R \cup + \left\{\infty\right\}$ is a proper closed convex function.
 Let us recall the generalized derivation chain rule from Br\'ezis~\cite{Br} that will be useful:
\begin{lemma}{\rm {\cite[Lemme 4, p.73]{Br}}} \label{lemma_Brezis} Let	$\Phi: \mathcal H \rightarrow \R \cup \{+\infty\}$ be a closed convex proper function. Let
  $u\in L^2(0, T; \mathcal H)$ be such that $\dot{u}\in L^2 (0, T; \mathcal H)$, and
  $u(t)\in {\rm \mbox{dom}}(\partial \Phi)\ \mbox{for a.e.} \ t$. Assume that there exists
  $\xi\in L^2(0, T; \mathcal H)$ such that $\xi(t)\in \partial \Phi(u(t))$ for
  a.e. $t$. Then the function $t\mapsto \Phi(u(t))$ is absolutely
  continuous, and for every $t$ such that $u$ and  $\Phi(u)$ are differentiable at $t$, and $u(t)\in {\rm \mbox{dom}}(\partial \Phi)$, we have
$$
\forall h\in  \partial \Phi(u(t)),\qquad \frac{d}{dt}  \Phi(u(t))=\langle \dot{u}(t), h\rangle.
$$
\end{lemma}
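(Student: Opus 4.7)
The plan is to deduce both the absolute continuity of $t\mapsto \Phi(u(t))$ and the chain rule formula from the subgradient inequality for the convex function $\Phi$, combined with the $L^2$ integrability hypotheses on $\dot u$ and $\xi$. The elementary ingredient on which everything rests is that, for any $s,t\in[0,T]$ and any measurable subgradient selection $\xi(\cdot)\in\partial\Phi(u(\cdot))$, convexity of $\Phi$ sandwiches the increment:
\begin{equation*}
\langle \xi(t),\, u(s)-u(t)\rangle \;\leq\; \Phi(u(s))-\Phi(u(t)) \;\leq\; \langle \xi(s),\, u(s)-u(t)\rangle.
\end{equation*}

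First I would establish absolute continuity of $\Phi\circ u$. Writing $u(s)-u(t)=\int_t^s\dot u(\sigma)\,d\sigma$ and substituting into the sandwich above, the increment $\Phi(u(s))-\Phi(u(t))$ is controlled by integrals of the form $\int_t^s\langle \xi(\cdot),\dot u(\sigma)\rangle\,d\sigma$, with $\xi$ evaluated at $t$ or at $s$. The cleanest route, which is Br\'ezis's original approach, is to regularize via the Moreau--Yosida approximation $\Phi_\mu$: the composition $\Phi_\mu\circ u$ is of class $\mathscr{C}^1$ with derivative $\langle\nabla\Phi_\mu(u(\cdot)),\dot u(\cdot)\rangle$, and the $L^2$ bound on $\xi$ together with $\|\nabla\Phi_\mu(u(t))\|\leq\|\xi(t)\|$ furnishes a uniform $L^1$ dominating function for these derivatives; passing to the limit $\mu\downarrow 0$ yields absolute continuity of $\Phi\circ u$ itself, hence its almost-everywhere differentiability by Lebesgue's theorem.

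Second, at any point $t$ where $u$ and $\Phi\circ u$ are both differentiable and $u(t)\in\mathrm{dom}(\partial\Phi)$, I would fix an arbitrary $h\in\partial\Phi(u(t))$ and apply the subgradient inequality at $u(t)$ with test point $u(t+\tau)$:
\begin{equation*}
\Phi(u(t+\tau))-\Phi(u(t)) \;\geq\; \langle h,\, u(t+\tau)-u(t)\rangle \qquad \text{for every } \tau.
\end{equation*}
Dividing by $\tau>0$ and letting $\tau\to 0^+$ gives $\tfrac{d}{dt}\Phi(u(t))\geq \langle h,\dot u(t)\rangle$; dividing by $\tau<0$ reverses the inequality sign and yields the opposite bound $\tfrac{d}{dt}\Phi(u(t))\leq \langle h,\dot u(t)\rangle$. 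Combining the two produces the desired equality, valid for every $h\in\partial\Phi(u(t))$.

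The main obstacle is the absolute continuity step. The pointwise sandwich controls difference quotients but, because $\xi$ is merely in $L^2$ rather than $L^\infty$, it is not entirely straightforward to upgrade it to the summability required over arbitrary finite collections of disjoint intervals. This is precisely why the Moreau--Yosida regularization is convenient: it replaces the possibly rough selection $\xi$ with the single-valued, everywhere defined, bounded-by-$\|\xi\|$ map $\nabla\Phi_\mu\circ u$, reducing the issue to a standard passage to the limit under domination. Once absolute continuity is in hand, the chain rule part reduces to the elementary two-sided argument above, and the freedom of $h\in\partial\Phi(u(t))$ falls out at no extra cost.
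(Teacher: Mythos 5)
The paper itself offers no proof of this lemma: it is quoted verbatim from Br\'ezis's book (Lemme~3.3, Chapter~III) and used as a black box. Your reconstruction is nonetheless sound and follows the standard Br\'ezis-style route: regularize through the Moreau--Yosida envelope $\Phi_\mu$, use that $\Phi_\mu$ is $\mathscr{C}^{1}$ with $\nabla\Phi_\mu=(\partial\Phi)_\mu$ so that the classical chain rule applies to $\Phi_\mu\circ u$, invoke the uniform bound $\norm{\nabla\Phi_\mu(u(t))}\leq\norm{\xi(t)}$ to dominate the derivatives by the $L^1$ function $\norm{\xi}\,\norm{\dot u}$, and let $\mu\downarrow 0$ using the pointwise monotone convergence $\Phi_\mu\uparrow\Phi$ (finite on $\dom(\partial\Phi)$) to inherit the estimate
\[
\abs{\Phi(u(s))-\Phi(u(t))}\leq\int_t^s\norm{\xi(\sigma)}\norm{\dot u(\sigma)}\,d\sigma
\]
for a.e.\ $s,t$. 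That yields absolute continuity, after which the two-sided subgradient argument at a differentiability point is elementary and correct. You correctly identify the only genuine subtlety, namely that the raw sandwich inequality with $\xi$ evaluated at the endpoints does not immediately give absolute continuity because $\xi$ is only in $L^2$ rather than $L^\infty$; the regularization is precisely what repairs this. Two small points worth stating explicitly if this were written out in full: one needs $u(s),u(t)\in\dom(\partial\Phi)$ for the passage $\Phi_\mu(u(\cdot))\to\Phi(u(\cdot))$, which holds for a.e.\ $s,t$ and then lets you extend $\Phi\circ u$ uniquely to an absolutely continuous representative on $[0,T]$; and the final equality is recovered for \emph{every} $h\in\partial\Phi(u(t))$, not only the canonical $\xi(t)$, which is exactly the two-sided limit you carry out.
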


\subsection{Minimizing property}
Since $v(t)\in\partial f(y(t))$, $\lambda (t) v(t)=x(t)-y(t)$, and
$\lambda(t)^2\norm{v(t)}=\theta$, by the convex subdifferential inequality
\begin{align}
  \nonumber
  f(x(t))\geq f(y(t))+\inner{x(t)-y(t)}{v(t)}
  & \geq f(y(t))+\lambda(t)\norm{v(t)}^2 \\
  \label{eq:x}
  & =	 f(y(t))+\sqrt{\theta}\norm{v(t)}^{3/2}.
\end{align}

\begin{lemma}
  \label{lm:dfy}
  The function $t\mapsto f(y(t))$ is locally Lipschitz continuous,
  non-increasing and for any $t_2 > t_1 \geq 0$,
  \begin{align}\label{eq: basic1}
  f(x(t_2)) & \leq \int_0^{\Delta t} \left[ (1-e^{-\Delta t})f(y(t_1+u)) +
    e^{-\Delta t}f(x(t_1))\right]\dfrac{e^u}{e^{\Delta t}-1}\;du\\
  & \leq  (1-e^{-\Delta t})f(y(t_1)) +
    e^{-\Delta t}f(x(t_1))
  \end{align}
  where $\Delta t=t_2-t_1$.
\end{lemma}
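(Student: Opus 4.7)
The plan is to break the statement into two parts: (a) regularity and monotonicity of $t\mapsto f(y(t))$, and (b) the integral inequality for $f(x(t_2))$, which will follow from the integral representation of $x(t_2)$ in Lemma~\ref{lm:cc} combined with Jensen's inequality.

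\textbf{Step 1 (regularity of $f\circ y$).} By Proposition~\ref{pr:oode}~$ii)$, both $y(\cdot)$ and $v(\cdot)$ are locally Lipschitz continuous, hence locally absolutely continuous and differentiable almost everywhere. Since $v(t)\in\partial f(y(t))$, I would apply Br\'ezis' chain rule (Lemma~\ref{lemma_Brezis}) on any interval $[0,T]$: both $y$ and $\dot y$ belong to $L^2(0,T;\mathcal H)$, and $v\in L^2(0,T;\mathcal H)$ is an $L^2$-selection of $\partial f(y(\cdot))$. Therefore $t\mapsto f(y(t))$ is absolutely continuous, and at every point of differentiability,
\begin{equation*}
\frac{d}{dt}f(y(t))=\inner{\dot y(t)}{v(t)}.
\end{equation*}
Local Lipschitz continuity of $f\circ y$ follows from local boundedness of $\dot y$ and $v$. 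Moreover, by Proposition~\ref{pr:oode}~$iv)$, $\inner{\dot y(t)}{v(t)}\leq 0$ almost everywhere, so $t\mapsto f(y(t))$ is non-increasing.

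\textbf{Step 2 (integral inequality via Jensen).} From Lemma~\ref{lm:cc},
\begin{equation*}
x(t_2)=\int_0^{\Delta t}\bigl[(1-e^{-\Delta t})y(t_1+u)+e^{-\Delta t}x(t_1)\bigr]\frac{e^u}{e^{\Delta t}-1}\,du,
\end{equation*}
and observe that $\mu(du):=\dfrac{e^u}{e^{\Delta t}-1}du$ is a probability measure on $[0,\Delta t]$ since $\int_0^{\Delta t}e^u\,du=e^{\Delta t}-1$. Since $f$ is proper, convex, and lower semicontinuous, Jensen's inequality applied to this probability measure yields
\begin{equation*}
f(x(t_2))\leq\int_0^{\Delta t}f\bigl((1-e^{-\Delta t})y(t_1+u)+e^{-\Delta t}x(t_1)\bigr)\,\mu(du).
\end{equation*}
Inside the integrand, $(1-e^{-\Delta t})y(t_1+u)+e^{-\Delta t}x(t_1)$ is a convex combination of two points with weights summing to $1$, so by convexity of $f$,
\begin{equation*}
f\bigl((1-e^{-\Delta t})y(t_1+u)+e^{-\Delta t}x(t_1)\bigr)\leq (1-e^{-\Delta t})f(y(t_1+u))+e^{-\Delta t}f(x(t_1)).
\end{equation*}
Substituting back yields the first inequality of the lemma.

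\textbf{Step 3 (second inequality).} Since $f\circ y$ is non-increasing (Step~1), for every $u\in[0,\Delta t]$ we have $f(y(t_1+u))\leq f(y(t_1))$. Using also that $\mu$ is a probability measure,
\begin{equation*}
\int_0^{\Delta t}(1-e^{-\Delta t})f(y(t_1+u))\,\mu(du)\leq (1-e^{-\Delta t})f(y(t_1)),\quad \int_0^{\Delta t}e^{-\Delta t}f(x(t_1))\,\mu(du)=e^{-\Delta t}f(x(t_1)),
\end{equation*}
which combined gives the upper bound $(1-e^{-\Delta t})f(y(t_1))+e^{-\Delta t}f(x(t_1))$.

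The only delicate point is the application of the Br\'ezis chain rule in Step~1, which requires verifying that $v$ is an $L^2$-selection of $\partial f\circ y$; this is straightforward since $\|v(t)\|$ is bounded on any compact interval (in fact non-increasing by Proposition~\ref{pr:oode}~$v)$). The rest is a clean application of Jensen's inequality and convexity.
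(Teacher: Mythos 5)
Your proof is correct and follows essentially the same approach as the paper: Br\'ezis' chain rule (Lemma~\ref{lemma_Brezis}) for the monotonicity of $f\circ y$, and the integral representation of Lemma~\ref{lm:cc} together with Jensen's inequality for the two displayed inequalities. The only (immaterial) difference is in the order of the local-Lipschitz argument: the paper first proves local Lipschitz continuity of $f\circ y$ directly from the two-sided subgradient inequality $\inner{y_2-y_1}{v_1}\leq f(y_2)-f(y_1)\leq\inner{y_2-y_1}{v_2}$ together with $\norm{v(\cdot)}$ non-increasing, and only then invokes the chain rule; you instead apply the chain rule first to obtain absolute continuity and derive local Lipschitz continuity from the local boundedness of the a.e.~derivative $\inner{\dot y(t)}{v(t)}$. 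Both are valid and rely on the same ingredients.
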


\begin{proof}
  Suppose that $t_2,t_1\geq 0$, $t_1\neq t_2$ and let
  \[
  y_1=y(t_1),\;  v_1=v(t_1),\;y_2=y(t_2),\; v_2=v(t_2).
  \]
  Since $v_i\in\partial f(y_i)$ for $i=1,2$
  \begin{align*}
    f(y_2)\geq f(y_1)+\inner{y_2-y_1}{v_1},\quad
    f(y_1)\geq f(y_2)+\inner{y_1-y_2}{v_2}.
  \end{align*}
  Therefore
  \begin{align*}
   \inner{y_2-y_1}{v_1} \leq f(y_2)-f(y_1) \leq \inner{y_2-y_1}{v_2}
  \end{align*}
  and
  \[
  \abs{f(y_1)-f(y_2)}\leq\norm{y_1-y_2}\max\{\norm{v_1},\norm{v_2}\}\leq
  \norm{y_1-y_2}\norm{v(0)}
  \]
  where in the last inequality, we use that
    $\norm{v (\cdot)}$ is non-increasing, (see Proposition \ref{pr:oode}, item $v)$).
  Since $t\mapsto y(t)$ is locally Lipschitz continuous, $t\mapsto f(y(t))$ is also
  locally Lipschitz continuous on $[0,\infty[$. Moreover, $t\mapsto f(y(t))$ is
  differentiable almost everywhere. Since $y$ is locally Lipschitz continuous, and $v(\cdot)$ is bounded,
   by Lemma \ref{lemma_Brezis},  the derivation chain rule holds true (indeed, it provides another proof of the absolute continuity of $t\mapsto f(y(t))$). Hence
  \begin{align*}
    \dfrac{d}{dt}f(y(t))=\inner{\dot y(t)}{v(t)} \leq 0,
  \end{align*}
  where in the last inequality, we use Proposition \ref{pr:oode}, item $iv)$.  Hence $t\mapsto f(y(t))$ is locally Lipschitz continuous, and
  non-increasing.
  Let us now prove  inequality \eqref{eq: basic1}. Without any restriction we can take	$t_1=0$ and $t_2=t=\Delta t$.
  By Lemma \ref{lm:cc}
  \begin{equation}
    \label{eq:cc2}
    x(t)= \int_0^t\left[(1-e^{-t})y(u)+e^{-t}x_0\right]\dfrac{e^u}{e^t-1}\;du.
  \end{equation}
 The conclusion follows from the convexity of $f$, Jensen's inequality,
  and $t\mapsto f(y(t))$
  non-increasing.
\end{proof}
\begin{corollary}
  \label{cr:dfx}
  If $f(x(0))<+\infty$, then for any $t\geq 0$, we have
  \begin{align}
    &i) \ f(x(t))< + \infty,\\
    &ii) \ t \mapsto f(x(t)) \ \mbox{ is non-increasing},\\
    &iii)\  \limsup_{h\to 0^+} \dfrac{f(x(t+h))-f(x(t))}{h}\leq f(y(t))-f(x(t))
    \leq -\sqrt{\theta}\norm{v(t)}^{3/2}.
  \end{align}
\end{corollary}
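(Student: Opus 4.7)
The three claims all follow from the two ingredients already assembled: the pointwise comparison
\[
  f(x(t))-f(y(t))\geq \sqrt{\theta}\norm{v(t)}^{3/2}\geq 0
\]
from \eqref{eq:x}, and the weighted-average inequality
\[
  f(x(t_2))\leq (1-e^{-\Delta t})f(y(t_1))+e^{-\Delta t}f(x(t_1))
\]
from the second line of \eqref{eq: basic1} in Lemma~\ref{lm:dfy}. The whole plan is to feed these two inequalities into each other with appropriate choices of $t_1,t_2$.

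For item $i)$, I would first note that $y(0)=(I+\lambda(0)A)^{-1}x_0\in\dom(\partial f)\subset\dom(f)$, so $f(y(0))<+\infty$. Applying the Lemma~\ref{lm:dfy} inequality with $t_1=0$, $t_2=t$ gives
\[
  f(x(t))\leq (1-e^{-t})f(y(0))+e^{-t}f(x_0)<+\infty
\]
since both $f(y(0))$ and $f(x_0)$ are finite by hypothesis.

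For item $ii)$, take $t_1=t$ and $t_2=t+h$ with $h>0$. The Lemma~\ref{lm:dfy} inequality yields
\[
  f(x(t+h))\leq (1-e^{-h})f(y(t))+e^{-h}f(x(t)),
\]
and substituting the bound $f(y(t))\leq f(x(t))$ from \eqref{eq:x} immediately gives $f(x(t+h))\leq f(x(t))$. (Here $f(x(t))<+\infty$ by item $i)$, so the subtraction is well defined.)

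For item $iii)$, rearrange the inequality from the previous step to obtain
\[
  \frac{f(x(t+h))-f(x(t))}{h}\leq \frac{1-e^{-h}}{h}\bigl(f(y(t))-f(x(t))\bigr),
\]
and use $f(y(t))-f(x(t))\leq 0$ together with $\lim_{h\to 0^+}(1-e^{-h})/h=1$ to pass to the limit superior, obtaining the first inequality of $iii)$. The second inequality of $iii)$ is a direct rewriting of \eqref{eq:x}. There is no real obstacle here: once Lemma~\ref{lm:dfy} is in hand the three statements are essentially bookkeeping, and the only mild care needed is to ensure finiteness of $f(x(t))$ before forming differences, which is precisely why item $i)$ is stated and proved first.
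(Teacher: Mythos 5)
Your proof is correct and follows the same route as the paper: both invoke the second inequality of Lemma~\ref{lm:dfy} with $t_1=t$, $t_2=t+h$ to get $\frac{f(x(t+h))-f(x(t))}{h}\leq\frac{1-e^{-h}}{h}\bigl(f(y(t))-f(x(t))\bigr)$, then pass to the $\limsup$ and use \eqref{eq:x}. You merely spell out items $i)$ and $ii)$ more explicitly (in particular via $t_1=0$, $t_2=t$ and the observation $y(0)\in\dom(\partial f)$) where the paper leaves them to the same computation; this added care is sound and arguably clearer.
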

\begin{proof}
  Take $t \geq 0$ and $h > 0$. Direct use of Lemma~\ref{lm:dfy} with
  $t_1 = t$ and $t_2 = t+h$ yields
  \begin{align*}
    \dfrac{f(x(t+h))-f(x(t))}{h} \leq \dfrac{1-e^{-h}}{h}(f(y(t))-f(x(t))),
  \end{align*}
  and the conclusion follows by taking the $\limsup$ as $h \to 0^+$ on both
  sides of this inequality, and by using \eqref{eq:x}.
\end{proof}

\subsection{Rate of convergence}

In this subsection, we assume that $f$ has  minimizers. Let
\[
\bar z\in\arg\min f,\qquad d_0=\inf
\{ \norm{x_0 -z} :\;z\text{ minimizes } f\}= \norm{x_0- \bar z}.
\]
Since $v(t)\in\partial f(y(t))$, for any $t \geq 0$
\begin{align*}
  f(y(t))-f(\bar z) \leq \inner{y(t)-\bar z}{v(t)}
  &\leq \norm{y(t)-\bar z}\norm{v(t)}\\
   & \leq \norm{x(t)-\bar z}\,\norm{v (t)} \leq d_0\norm{v (t)}
\end{align*}
where we have used  $y (t)= J_{\lambda (t)}^{A}(x(t))$, $\bar z= J_{\lambda (t)}^{A}(\bar z)$,	$J_{\lambda (t)}^{A}$ nonexpansive,
 and $t\mapsto \norm{x(t)-\bar z}$ non-increasing (see (\ref{eq:hd})).
Combining the above inequality with \eqref{eq:x}, we conclude that for
any $t\geq0$
\begin{align}
  \label{eq:c0}
  f(x(t))\geq f(y(t))+(f(y(t)-f(\bar z))^{3/2}\sqrt{\theta/d_0^3}.
\end{align}
Now we will use the following auxiliary result, a direct consequence of
 the convexity property of $r \mapsto r^{3/2}$.
\begin{lemma}
  \label{lm:aux1}
  If $a,b,c\geq 0$ and $a\geq b+c b^{3/2}$ then
  \begin{align*}
    b\leq a-\dfrac{c a^{3/2}}{1+(3c/2)
   a^{1/2}}
  \end{align*}
\end{lemma}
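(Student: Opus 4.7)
The plan is to extract the inequality from the convexity hint as directly as possible. The function $r \mapsto r^{3/2}$ is convex on $[0,\infty[$ and differentiable on $]0,\infty[$, with derivative $(3/2)r^{1/2}$. Applying the subgradient inequality at the point $a$ (rather than at $b$, which would give a useless inequality) yields, for all $b \geq 0$,
\begin{equation*}
b^{3/2} \geq a^{3/2} + \tfrac{3}{2}a^{1/2}(b-a).
\end{equation*}
This step handles the only nontrivial ingredient: we linearize $b^{3/2}$ from below as an affine function of $b$, with coefficients depending only on $a$.

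Next, I would multiply this inequality by $c \geq 0$ and add $b$ to both sides to obtain a linear lower bound in $b$ for the quantity $b + cb^{3/2}$. Plugging in the hypothesis $a \geq b + cb^{3/2}$ on the left-hand side then gives
\begin{equation*}
a \geq b + ca^{3/2} + \tfrac{3c}{2}a^{1/2}(b-a).
\end{equation*}
This is an affine inequality in $b$, with positive coefficient $1 + (3c/2)a^{1/2} > 0$ in front of $b$.

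Finally, I would solve for $b$: group the $b$-terms on the right and the $a$-terms on the left to get
\begin{equation*}
a + \tfrac{c}{2}a^{3/2} \geq b\bigl(1 + \tfrac{3c}{2}a^{1/2}\bigr),
\end{equation*}
and then divide. The last step is a brief algebraic identity check, namely
\begin{equation*}
\frac{a + (c/2)a^{3/2}}{1 + (3c/2)a^{1/2}} = a - \frac{c a^{3/2}}{1 + (3c/2)a^{1/2}},
\end{equation*}
which is immediate by bringing $a$ under a common denominator. This delivers exactly the stated bound. The degenerate case $a=0$ forces $b=0$ and is covered trivially, so no separate argument is needed.

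I do not really expect an obstacle here: the only mild subtlety is recognizing that one must use the tangent-line inequality for $r^{3/2}$ at $r=a$ (which gives a lower bound for $b^{3/2}$ in terms of $b$), not at $r=b$. Once that choice is made, the rest is mechanical rearrangement.
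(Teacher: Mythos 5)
Your proof is correct and takes essentially the same approach as the paper's: both use the convex tangent-line (subgradient) inequality at $r=a$. The only cosmetic difference is that the paper applies it to the single convex function $\varphi(t)=t+ct^{3/2}$ and writes $a\geq\varphi(b)\geq\varphi(a)+\varphi'(a)(b-a)$, whereas you apply it to $t\mapsto t^{3/2}$ alone and carry the linear term $b$ separately; the rearrangement and the final algebraic identity are the same.
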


 \begin{proof}
   The non-trivial case is $a,c>0$, which will be analyzed. Define
   \[
   \varphi:[0,\infty)\to \R,\qquad  \varphi(t)=t+ct^{3/2}.
   \]
   Observe that $\varphi$ is convex, and $a \geq \varphi (b)$. Let us write the convex differential inequality at $a$
   \begin{align*}
    a \geq \varphi (b) \geq \varphi (a) + \varphi'(a) (b-a).
  \end{align*}
   After simplification, we obtain the desired result.
 \end{proof}

\begin{proposition}
  \label{pr:c1}
  For any $t\geq0$,
  \begin{align}
    f(y) \leq f(x)-
    \dfrac{\kappa(f(x)-f(\bar z))^{3/2}}{1+(3\kappa/2)(f(x)-f(\bar z))^{1/2}},
  \end{align}
  where $x=x(t)$, $y=y(t)$ and $\kappa=\sqrt{\theta/d_0^3}$.
\end{proposition}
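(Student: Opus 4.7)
The plan is to substitute shifted function values into Lemma~\ref{lm:aux1} using the key inequality \eqref{eq:c0} that was already established immediately before the statement.

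First I would set $a = f(x(t)) - f(\bar z)$ and $b = f(y(t)) - f(\bar z)$, both of which are non-negative since $\bar z$ is a minimizer. With $c = \kappa = \sqrt{\theta/d_0^3} \geq 0$, the inequality \eqref{eq:c0}, namely
\[
f(x(t)) \geq f(y(t)) + \sqrt{\theta/d_0^3}\,(f(y(t)) - f(\bar z))^{3/2},
\]
rewrites, after subtracting $f(\bar z)$ from both sides, exactly as the hypothesis $a \geq b + c b^{3/2}$ of Lemma~\ref{lm:aux1}.

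Next I would apply Lemma~\ref{lm:aux1} to obtain
\[
f(y(t)) - f(\bar z) \;\leq\; (f(x(t)) - f(\bar z))
- \frac{\kappa(f(x(t)) - f(\bar z))^{3/2}}{1 + (3\kappa/2)(f(x(t)) - f(\bar z))^{1/2}}.
\]
Cancelling $f(\bar z)$ from both sides of this inequality yields precisely the claimed estimate. I would then simply note that, with $x = x(t)$ and $y = y(t)$, this is the desired conclusion.

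There is no real obstacle here: both ingredients, \eqref{eq:c0} and Lemma~\ref{lm:aux1}, were set up explicitly just above the statement precisely so that this proposition follows by a direct substitution. The only minor care needed is checking the non-negativity of $a$ and $b$, which is immediate from $\bar z \in \arg\min f$, so that Lemma~\ref{lm:aux1} indeed applies.
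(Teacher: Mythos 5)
Your proposal is correct and follows exactly the same route as the paper: subtract $f(\bar z)$ from both sides of \eqref{eq:c0} and apply Lemma~\ref{lm:aux1} with $a=f(x(t))-f(\bar z)$, $b=f(y(t))-f(\bar z)$, $c=\kappa$. The extra remark on non-negativity of $a$ and $b$ is a fine, if minor, bit of diligence.
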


\begin{proof}
  Subtracting $f(\bar z)$ on both sides of \eqref{eq:c0} we conclude that
  \begin{align*}
    f(x(t))-f(\bar z)\geq f(y(t))-f(\bar z)+(f(y(t)-f(\bar
    z))^{3/2}\sqrt{\theta/d_0^3}.
  \end{align*}
  To end the proof, use Lemma~\ref{lm:aux1} with $a=f(x(t))-f(\bar z)$,
  $b=f(y(t))-f(\bar z)$ and $c=\sqrt{\theta/d_0^3}$.
\end{proof}

\begin{theorem}
  \label{th:complexity}
  Let us assume that $f(x(0)) < + \infty$. Set $\kappa=\sqrt{\theta/d_0^3}$. Then,
  for any $t \geq 0$
  \begin{align*}
    f(x(t))-f(\bar z)\leq \dfrac{f(x_0)-f(\bar z)}{
      \left[
	1+\dfrac{t\kappa\sqrt{f(x_0)-f(\bar z)}}{2+3\kappa\sqrt{f(x_0)-f(\bar z)}}
      \right]^2} 
  \end{align*}
\end{theorem}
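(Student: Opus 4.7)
The plan is to reduce Theorem~\ref{th:complexity} to a scalar differential inequality for the energy $w(t):=f(x(t))-f(\bar z)$ and then integrate it. By Corollary~\ref{cr:dfx}, $w$ is non-negative, continuous and non-increasing on $[0,\infty)$, with $w_0:=w(0)=f(x_0)-f(\bar z)$; the case $w_0=0$ is trivial since then $w\equiv 0$ and the advertised right-hand side vanishes. Combining item (iii) of Corollary~\ref{cr:dfx} with Proposition~\ref{pr:c1} gives the Dini-type inequality
\begin{equation*}
\limsup_{h\to 0^+}\frac{w(t+h)-w(t)}{h}\leq -g(w(t)),\qquad g(r):=\frac{\kappa r^{3/2}}{1+(3\kappa/2)\sqrt{r}}.
\end{equation*}

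Next, I would integrate this inequality against the primitive
\begin{equation*}
\Phi(s):=\int_s^{w_0}\frac{dr}{g(r)}=\frac{2}{\kappa}\left(\frac{1}{\sqrt{s}}-\frac{1}{\sqrt{w_0}}\right)+\frac{3}{2}\ln\frac{w_0}{s},
\end{equation*}
obtained by splitting $1/g(r)=1/(\kappa r^{3/2})+3/(2r)$ and integrating term by term. Formally the chain rule yields $D^{+}(\Phi\circ w)(t)=-D^{+}w(t)/g(w(t))\geq 1$, so a standard Dini-derivative comparison argument (equivalently, comparison with the scalar ODE $\dot W=-g(W)$, $W(0)=w_0$, for which $\Phi(W(t))=t$) gives $\Phi(w(t))\geq t$ for every $t\geq 0$.

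It then remains to convert this implicit bound into the explicit one stated in the theorem. Set $u(t):=1/\sqrt{w(t)}$ and $u_0:=1/\sqrt{w_0}$, so $u(t)\geq u_0$ because $w$ is non-increasing. The elementary inequality $\ln x\leq x-1$ for $x\geq 1$, applied with $x=u(t)/u_0$, yields $\tfrac{3}{2}\ln(w_0/w(t))=3\ln(u(t)/u_0)\leq 3(u(t)-u_0)/u_0$. Substituting into $t\leq\Phi(w(t))$ gives
\begin{equation*}
t\leq (u(t)-u_0)\left[\frac{2}{\kappa}+\frac{3}{u_0}\right]=\frac{(u(t)-u_0)(2u_0+3\kappa)}{\kappa u_0},
\end{equation*}
which rearranges to $u(t)/u_0\geq 1+\kappa t\sqrt{w_0}/(2+3\kappa\sqrt{w_0})$. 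Squaring and using $w(t)=w_0(u_0/u(t))^2$ produces exactly the advertised inequality.

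The delicate step is the passage from the $\limsup$ differential inequality to the integrated inequality $\Phi\circ w\geq t$: since a priori $w$ is only continuous and non-increasing (not $\mathscr{C}^1$), one must invoke a Dini-derivative comparison lemma rather than the naive chain rule. Once that step is justified, the rest is elementary calculus; the slack in $\ln x\leq x-1$ is exactly what produces the clean $\bigo(1/t^2)$ form of the bound rather than the slightly sharper rate implicit in $\Phi(w(t))\geq t$.
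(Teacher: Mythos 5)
Your proof is correct, and it departs from the paper's argument at two points.

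First, to handle the fact that $w(t)=f(x(t))-f(\bar z)$ is a priori only continuous and non\mbox{-}increasing, you invoke a Dini\mbox{-}derivative comparison lemma, whereas the paper argues separately (first under a local\mbox{-}Lipschitz hypothesis, then in general by viewing the non\mbox{-}increasing $\beta$ as a BV function and mollifying the resulting measure\mbox{-}differential inequality by convolution, using convexity of $r\mapsto r^{3/2}$ and Jensen). Both resolutions are valid; the Dini route is self\mbox{-}contained, while the mollification route stays closer to the classical ODE computation. A small gap in your write\mbox{-}up is that you assert $D^{+}(\Phi\circ w)=-D^{+}w/g(w)$ by the chain rule, but since $\Phi'<0$ the correct identity for monotone $w$ is $D^{+}(\Phi\circ w)(t)=\Phi'(w(t))\,D_{+}w(t)$ (lower Dini derivative on the right); this is harmless here because $D_{+}w\le D^{+}w\le -g(w)$ still yields $D^{+}(\Phi\circ w)\ge 1$, and the conclusion $\Phi(w(t))\ge t$ follows from the fact that a continuous function with non\mbox{-}negative upper Dini derivative is non\mbox{-}decreasing, but it should be stated precisely.

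Second, and more interestingly, the paper first replaces the denominator $1+(3\kappa/2)\beta^{1/2}$ by $1+(3\kappa/2)\beta_0^{1/2}$ (legitimate because $\beta$ is non\mbox{-}increasing), obtaining the autonomous inequality $\dot\beta\le-\tilde\kappa\beta^{3/2}$ with constant $\tilde\kappa$, which integrates in one line via $u=1/\sqrt\beta$. You instead integrate the exact right\mbox{-}hand side, splitting $1/g(r)=1/(\kappa r^{3/2})+3/(2r)$, which produces the implicit (and slightly sharper) bound $\Phi(w(t))\ge t$, and then recover the paper's explicit rate from $\ln x\le x-1$. The endpoint is identical, since $\ln x\le x-1$ with $x=\sqrt{w_0/w(t)}$ undoes precisely the extra logarithmic term, but your intermediate bound $\Phi(w(t))\ge t$ is a marginally stronger statement for large $t$, at the cost of being implicit. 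So the proof is a genuine alternative: it trades the paper's up\mbox{-}front simplification of the scalar ODE for an exact integration followed by an elementary relaxation.
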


\begin{proof}
 Set $\beta(t):=f(x(t))-f(\bar z)$. Consider first the case where $\beta (\cdot)$ is locally Lipschitz
continuous.
  Combining Proposition~\ref{pr:c1}
  with Corollary~\ref{cr:dfx}, and taking into account that $f(x(\cdot))$ is
  non-increasing,  we conclude that, almost everywhere
  \begin{align*}
   \dfrac{d}{dt}\beta \leq -\dfrac{\kappa \beta^{3/2}}{1+(3\kappa/2)\beta^{1/2}}
    \leq -\dfrac{\kappa \beta^{3/2}}{1+(3\kappa/2)\beta_0^{1/2}}
  \end{align*}
  where $\beta_0=\beta(0)=f(x_0)-f(\bar z)$. Defining
  \begin{align*}
    u=1/\sqrt{\beta},\qquad \tilde\kappa=
    \dfrac{\kappa}{1+(3\kappa/2)\beta_0^{1/2}}
  \end{align*}
  and substituting $\beta=1/u^2$ in the above inequality, we conclude that
  \begin{align*}
    -2u^{-3}\dfrac{d}{dt}u\leq - \dfrac{\kappa u^{-3}}{1+(3\kappa/2)\beta_0^{1/2}}.
  \end{align*}
  Therefore, for any $t\geq 0$,
  \begin{align*}
    u(t) \geq  \dfrac{t\kappa}{2+3\kappa\beta_0^{1/2}}+1/\beta_0^{1/2}.
  \end{align*}
  To end the proof, substitute $u=1/\sqrt{\beta}$ in the above inequality.
  In the general case, without assuming $\beta$ locally Lipschitz, we can write the differential equation in terms of differential measures
  ($\beta$ is non-increasing, hence it has a bounded variation, and its distributional derivative is a Radon measure):
  \begin{align*}
   d\beta  + \dfrac{\kappa }{1+(3\kappa/2)\beta_0^{1/2}} \beta^{3/2} \leq 0.
  \end{align*}
Let us regularize this equation  by convolution, with the help of a smooth kernel $\rho_{\epsilon}$ (note that we use convolution in $\mathbb R$, whatever the dimension of $\mathcal H$, possibly infinite). By convexity of $r \mapsto r^{3/2}$, and Jensen inequality, we obtain that $\beta \ast \rho_{\epsilon}$ is a smooth function that still satisfies the differential inequality. Thus we are reduced to the preceding situation, with bounds which are independent of $\epsilon$, whence the result by passing to the limit
  as $\epsilon \to 0.$
\end{proof}
Let us complete the convergence analysis by the following  integral estimate.

\begin{proposition}\label{rate-conv-y}
Suppose $S= \arg\min f \neq \emptyset$. 
Then
$$
\int_0^{+\infty} \lambda (t) (f(y(t)) - \inf f) dt \leq \frac{1}{2} \mbox{dist}^2 (x_0, S) .
$$
\end{proposition}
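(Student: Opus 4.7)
The plan is to combine the convex subdifferential inequality at $y(t)$ with the differential relation $\dot x(t)=y(t)-x(t)=-\lambda(t)v(t)$ to extract a quantity bounded by the time derivative of $\tfrac12\|x(t)-\bar z\|^2$, then integrate.

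First, fix $\bar z\in S=\arg\min f$ and set $h_{\bar z}(t)=\tfrac12\|x(t)-\bar z\|^2$. Since $v(t)\in\partial f(y(t))$, the subdifferential inequality gives
\[
f(y(t))-f(\bar z)\leq\langle y(t)-\bar z,v(t)\rangle.
\]
Using the identity $\lambda(t)v(t)=x(t)-y(t)$ from Proposition~\ref{pr:oode}$i)$, multiply by $\lambda(t)$:
\[
\lambda(t)\bigl(f(y(t))-f(\bar z)\bigr)\leq\langle y(t)-\bar z,x(t)-y(t)\rangle.
\]

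Next, I would rewrite $y(t)-\bar z=(x(t)-\bar z)+(y(t)-x(t))=(x(t)-\bar z)+\dot x(t)$ and use $x(t)-y(t)=-\dot x(t)$ to obtain
\[
\langle y(t)-\bar z,x(t)-y(t)\rangle=-\langle x(t)-\bar z,\dot x(t)\rangle-\|\dot x(t)\|^{2}=-\dot h_{\bar z}(t)-\|\dot x(t)\|^{2}.
\]
Therefore
\[
\lambda(t)\bigl(f(y(t))-f(\bar z)\bigr)+\|\dot x(t)\|^{2}\leq-\dot h_{\bar z}(t).
\]

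Finally, integrate over $[0,T]$; dropping the non-negative term $\int_0^T\|\dot x(t)\|^{2}dt$ and using $h_{\bar z}(T)\geq 0$ yields
\[
\int_0^{T}\lambda(t)\bigl(f(y(t))-\inf f\bigr)dt\leq h_{\bar z}(0)-h_{\bar z}(T)\leq\tfrac12\|x_0-\bar z\|^{2}.
\]
Letting $T\to+\infty$ (the integrand is non-negative since $f(y(t))\geq\inf f$) and then taking the infimum over $\bar z\in S$ gives the claimed bound $\tfrac12\mathrm{dist}^2(x_0,S)$.

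There is no real obstacle here: the argument is a standard energy-type estimate, and every ingredient (the relation $\dot x=y-x=-\lambda v$, the inclusion $v\in\partial f(y)$, and the monotonicity of $h_{\bar z}$ established in the proof of Theorem~\ref{th:main.2}) is already available. The only point requiring a little care is the algebraic rewriting $y-\bar z=(x-\bar z)+\dot x$, which produces the extra $-\|\dot x\|^{2}$ term; this term is favorable and can simply be discarded.
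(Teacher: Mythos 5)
Your argument is correct and is essentially the paper's proof, just with the steps reordered: the paper first expands $\dot h_z(t)=\langle x(t)-z,\dot x(t)\rangle = -\|\dot x(t)\|^2 - \langle y(t)-z,\lambda(t)v(t)\rangle$ (equations \eqref{basic-rate}--\eqref{basic-rate-2}), discards the $-\|\dot x\|^2$ term, then invokes the subdifferential inequality; you start from the subdifferential inequality, multiply by $\lambda(t)$, and recover exactly the same decomposition of $\dot h_{\bar z}$ by the algebraic rewriting $y-\bar z=(x-\bar z)+\dot x$. Both discard the favorable $\|\dot x\|^2$ term and integrate, and the final infimum over $\bar z\in S$ is the same observation the paper uses implicitly.
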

\begin{proof}
Let us return to the proof of Theorem \ref{th:main.2}, with  $A = \partial f$. Setting $ h_z (t) :=	\frac{1}{2} \| x(t) - z\|^2$, with $z\in \arg\min f$, by (\ref{basic-rate}) we have 
  \begin{align}\label{basic-rate-2}
    \dot{h}_z (t) +\inner{y(t) -z}{\lambda (t)v(t)} \leq 0.
  \end{align}
By the convex subdifferential inequality, and $v(t) \in \partial f (y(t))$, we have  
 $$
 f(z) \geq f(y(t)) + \inner{z- y(t)}{v(t)}.
 $$ 
Combining the two above inequalities, we obtain  
 \begin{align}\label{basic-rate-3}
    \dot{h}_z (t) + \lambda (t) (f(y(t)) - \inf f) \leq 0.
  \end{align} 
By integrating this inequality, we obtain the announced result. 
\end{proof}


\section{A large-step proximal point method for convex optimization with
  relative error tolerance}
\label{sec:ur.pp}
In this section, we study the iteration complexity of a variant of the proximal point (PP) method for convex optimization (CO).
It can be viewed as a discrete version of the  continuous dynamical system studied in the previous sections.
The main distinctive
features of this variant are: a relative error tolerance for the solution
of the proximal subproblems similar to the ones proposed in~\cite{So-Sv1,So-Sv2};
a large-step condition, as proposed in~\cite{MS1,MS2}.

The PP method~\cite{MR0298899,Rock,MR0418919}
is a classical method for finding zeroes of
maximal monotone operators and, in particular,
for solving CO problems. It has been used as a framework for the analysis and design of
many practical algorithms (e.g., the augmented Lagrangian, the  proximal-gradient, or the alternating proximal minimization algorithms).
The fact that its classical convergence analysis~\cite{Rock} requires
the errors to be summable, motivates
the introduction in~\cite{So-Sv1,So-Sv2} of the Hybrid Proximal Extragradient (HPE) method,
an inexact PP type method which allows relative error tolerance in the solution of the proximal subproblems.
The relative error tolerance of the HPE was also used for 
minimization of semi-algebraic, or tame functions in~\cite{ABS}.

Consider the convex optimization problem:
\begin{align}
 \label{eq:p2}
	  \mbox{minimize}\,f(x)\quad \mbox{s.t.}\quad x\in \HH,
\end{align}
where $f:\HH\to \R\cup\{+\infty\}$ is a (convex) proper and closed function.
An exact proximal point iteration at $x\in \HH$
with stepsize $\lambda>0$
consists in computing
\begin{align*}
  y=(I+\lambda\partial f)^{-1}(x).
\end{align*}
Equivalently, for a given pair $(\lambda, x)\in \R_{++}\times \HH$, we
have to compute $y\in \HH$ such that
\begin{align*}
  0\in \lambda \partial f(y)+y-x.
\end{align*}
Decoupling the latter inclusion,
we are led to the following
\emph{proximal inclusion-equation system}:
\begin{align}
  \label{eq:ps}
  \begin{aligned}
   v\in\partial f(y),\quad \lambda v+y-x=0.
  \end{aligned}
\end{align}
We next show how errors in both the inclusion and
the equation in~\eqref{eq:ps}
can be handled with an appropriate error
criterion ($\partial_\varepsilon f$ stands for the classical notion of Legendre-Fenchel $\epsilon$-subdifferential).

\begin{proposition}
  \label{pr:ie}
  Let $x\in \HH$, $\lambda>0$ and $\sigma\in[0,1[$.
  If $y,v\in\HH$ and $\varepsilon \geq 0$ satisfy the conditions
  \begin{align}
    \label{eq:is}
    v\in\partial_\varepsilon f(y),\quad\norm{\lambda v+y-x}^2+2\lambda
    \varepsilon \leq \sigma^2\norm{y-x}^2,
  \end{align}
  then, the following statements hold:
  \begin{enumerate}
  \item[(a)]\label{it:b1}
    $ f(x')\geq f(y)+\inner{v}{x'-y}
    -\varepsilon \qquad \forall x'\in \HH$;
  \item[(b)] \label{it:b2}
    $ \displaystyle f(x)\geq f(y)+
    \frac{\lambda}{2} \norm{v}^2+\frac{1-\sigma^2}{2\lambda}
    \norm{y-x}^2\geq f(y)$;
  \item[(c)]\label{it:b3}
    $ (1+\sigma)\norm{y-x} \geq \norm{\lambda v}
    \geq (1-\sigma)\norm{y-x}$;
  \item[(d)]\label{it:b4} $\displaystyle
    \varepsilon\leq \frac{\sigma^2}{2(1-\sigma)}
    \norm{v}\,\norm{y-x}$;
  \end{enumerate}
  and
  \begin{align}
	\label{eq:pr:ie}
     \frac{\lambda}{2} \norm{v}^2+\frac{1-\sigma^2}{2\lambda}
    \norm{y-x}^2 \geq
 \max \left\{
      \norm{v}^{3/2}\sqrt{\lambda\norm{y-x}(1-\sigma)},
      \frac{1-\sigma}{\lambda} \norm{y-x}^2 \right\}  .
  \end{align}
\end{proposition}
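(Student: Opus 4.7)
I would address items (a)--(d) and then the double inequality~\eqref{eq:pr:ie} in order, each step resting only on its predecessors and on the error criterion~\eqref{eq:is}. Item~(a) is nothing but the defining inequality of the Legendre-Fenchel $\varepsilon$-subdifferential $\partial_\varepsilon f$ applied at an arbitrary test point $x'$.

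For (b), the plan is to apply (a) at $x'=x$, obtaining $f(x)\geq f(y)+\inner{v}{x-y}-\varepsilon$, and then to expand
\[
\norm{\lambda v+y-x}^2=\lambda^2\norm{v}^2+2\lambda\inner{v}{y-x}+\norm{y-x}^2
\]
inside~\eqref{eq:is}. Isolating $\inner{v}{x-y}-\varepsilon$ yields precisely $\tfrac{\lambda}{2}\norm{v}^2+\tfrac{1-\sigma^2}{2\lambda}\norm{y-x}^2$, which is non-negative because $\sigma<1$; both inequalities in (b) follow at once. For~(c), the criterion together with $\varepsilon\geq 0$ gives $\norm{\lambda v+y-x}\leq\sigma\norm{y-x}$, and the two halves of (c) come from applying the triangle inequality in both orientations. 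For~(d), I would combine the bound $2\lambda\varepsilon\leq\sigma^2\norm{y-x}^2$ (again immediate from \eqref{eq:is}) with (c), using $\norm{y-x}\leq\lambda\norm{v}/(1-\sigma)$ to convert one factor of $\norm{y-x}/\lambda$ into $\norm{v}/(1-\sigma)$.

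Turning to \eqref{eq:pr:ie}, the second (quadratic) lower bound is a short algebraic rearrangement: the lower side of (c) gives $\tfrac{\lambda}{2}\norm{v}^2\geq\tfrac{(1-\sigma)^2}{2\lambda}\norm{y-x}^2$, and the identity $(1-\sigma)^2+(1-\sigma^2)=2(1-\sigma)$ closes the argument after summing.

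The first lower bound of \eqref{eq:pr:ie} is where the only real work lies, and is the step I expect to be subtle because plain AM-GM produces a quadratic-in-$\norm{v}$ bound rather than the desired $\norm{v}^{3/2}$. My plan is in two steps. First, apply AM-GM to the two summands of the left-hand side to obtain
\[
\tfrac{\lambda}{2}\norm{v}^2+\tfrac{1-\sigma^2}{2\lambda}\norm{y-x}^2\geq\sqrt{1-\sigma^2}\,\norm{v}\,\norm{y-x}.
\]
Second, square both sides and compare with $\norm{v}^3\lambda(1-\sigma)\norm{y-x}$: after cancelling a common factor of $(1-\sigma)\norm{v}^2\norm{y-x}$ the desired inequality reduces to $(1+\sigma)\norm{y-x}\geq\lambda\norm{v}$, which is precisely the upper estimate in~(c). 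The crux is thus to use the two sides of (c) in tandem---the upper side here is what upgrades an otherwise-quadratic AM-GM bound to the nonstandard $\norm{v}^{3/2}\norm{y-x}^{1/2}$ scaling.
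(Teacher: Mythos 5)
Your proof is correct and follows essentially the same route as the paper: items (a)--(d) are handled identically, and for the cubic lower bound in~\eqref{eq:pr:ie} both arguments boil down to AM-GM combined with the upper estimate in~(c), the only difference being that the paper introduces the normalized variable $\mu=\lambda\norm{v}/\norm{y-x}$ while you work directly with the norms and a squaring/cancellation step. One small point worth flagging for a polished write-up: when you cancel $(1-\sigma)\norm{v}^2\norm{y-x}$ you implicitly assume $v\neq 0$ and $y\neq x$, so you should note (as the paper does) that by~(c) one is zero iff the other is, in which case \eqref{eq:pr:ie} is trivial.
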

\begin{proof}
\textit{(a)}
This statement follows trivially from the inclusion in~\eqref{eq:is},
and the definition of $\varepsilon$-subdifferentials.

\textit{(b)} First note that
  the inequality in~\eqref{eq:is}
  is equivalent to
  \[
  \norm{\lambda v}^2+\norm{y-x}^2
  -2\lambda\left[\inner{v}{x-y}-\varepsilon\right]\leq \sigma^2
  \norm{y-x}^2.
  \]
  Dividing both sides of the latter inequality by $2\lambda$,
	and using some trivial algebraic manipulations, we obtain
	\begin{align*}
    \inner{v}{x-y}-\varepsilon\geq&
   \frac{\lambda}{2}
   \norm{v}^2+
   \frac{1-\sigma^2}{2\lambda}
   \norm{y-x}^2,
  \end{align*}
	which, in turn, combined with \textit{(a)} evaluated at $x'=x$, yields
  the first inequality in \textit{(b)}. To complete the proof of \textit{(b)}, note that the second inequality
  follows trivially from the assumptions that $\lambda>0$ and
  $0\leq\sigma<1$.

 \textit{(c)} Direct use of the triangle
  inequality yields
  \begin{align*}
   \norm{y-x}+\norm{\lambda v+y-x} \geq \norm{\lambda v} \geq
   \norm{y-x}-\norm{\lambda v +y-x}.
  \end{align*}
	Since $\sigma\geq 0$, $\lambda>0$, and $\varepsilon\geq 0$, it follows
  from~\eqref{eq:is} that $ \norm{\lambda v+y-x}\leq\sigma\norm{y-x}$,
	which in turn combined with the latter displayed equation
	proves \textit{(c)}.

\textit{(d)} In view of the inequality in~\eqref{eq:is}, the second inequality in
  \textit{(c)}, and the assumption that $\sigma<1$, we have
	\begin{align*}
    2\lambda\varepsilon \leq \sigma^2\norm{y-x}^2
    & \leq \dfrac{\sigma^2}{1-\sigma}\norm{\lambda v}\,\norm{y-x},
  \end{align*}
	which trivially gives the statement in \textit{(d)}.

	To complete the proof of the proposition, it remains to prove~\eqref{eq:pr:ie}.
  To this end,  first note  that, due to \textit{(c)}, we have $y-x=0$
	if and only if $v=0$, in which case~\eqref{eq:pr:ie} holds trivially.
	Assume now that $y-x$ and $v$ are nonzero vectors. Defining
	the positive scalars $\theta=\lambda\norm{y-x}$, $\mu=\lambda\norm{v}/\norm{y-x}$
	and using \textit{(c)} we conclude that
	\begin{equation}
    1-\sigma\leq \mu\leq 1+\sigma.
    \label{eq:wk}
  \end{equation}
	Moreover, it follows directly from the definitions of $\theta$ and $\mu$ that
	\begin{align*}
	\frac{\lambda}{2} \norm{v}^2+\frac{1-\sigma^2}{2\lambda}
	\norm{y-x}^2
	& =  \frac{\lambda}{2} \norm{ v }^2
	\left(1+\frac{1-\sigma^2}{\mu^2}\right)
	=
    \norm{ v}^{3/2}\frac{\sqrt{\theta \mu}}{2}
    \left(1+\frac{1-\sigma^2}{\mu^2}\right).
  \end{align*}
  Since $t+1/t\geq 2$ for every $t>0$, it follows that
  \begin{align*}
    \sqrt{\mu}\left(1+\frac{1-\sigma^2}{\mu^2}\right) =
    \sqrt{\dfrac{1-\sigma^2}{\mu}}\left(
      \dfrac{\mu}{\sqrt{1-\sigma^2}}+
      \dfrac{\sqrt{1-\sigma^2}}{\mu}\right)
    \geq 2 \sqrt{\dfrac{1-\sigma^2}{\mu}}\geq 2\sqrt{1-\sigma},
  \end{align*}
  where the second inequality follows from the upper bound for $\mu$
  in~\eqref{eq:wk}.
	Combining the last two displayed equations, and using the definition of $\theta$, we obtain
  \begin{align*}
	\frac{\lambda}{2} \norm{v}^2+\frac{1-\sigma^2}{2\lambda}
	\norm{y-x}^2
	& \geq \norm{v}^{3/2}\sqrt{\lambda\norm{y-x}(1-\sigma)}.
  \end{align*}
	Likewise, using the second inequality \textit{(c)} we obtain
		\[
	\frac{\lambda}{2} \norm{v}^2+\frac{1-\sigma^2}{2\lambda}
	\norm{y-x}^2
	\geq
    \frac{(1-\sigma)^2}{2\lambda} \norm{ y - x }^2
    +
    \frac{1-\sigma^2}{2\lambda} \norm{ y - x }^2=\dfrac{1-\sigma}{\lambda}\norm{y-x}^2.
    \]
		To end the proof, combine the two above
    inequalities.
\end{proof}
Note that~\eqref{eq:is} allows errors
in both the inclusion and the equation in~\eqref{eq:ps}.
Indeed, since $\partial f(y)\subset \partial_\varepsilon f(y)$
it is easy to see that every triple $(\lambda,y,v)$
satisfying~\eqref{eq:ps} also satisfies~\eqref{eq:is} with $\varepsilon=0$.
Moreover, if $\sigma=0$ in~\eqref{eq:is}
then we have that $(\lambda,y,v)$ satisfies~\eqref{eq:ps}.

Motivated by the above results, we will now state our method
which uses approximate solutions of~\eqref{eq:p2}, in the sense 
of Proposition~\ref{pr:ie}.

\medskip

\noindent
\fbox{
\begin{minipage}[h]{5.9 in}
{\bf Algorithm 1:} A Large-step PP method for convex optimization
\begin{itemize}
\item[(0)] Let $x_0\in \mbox{dom}(f)$, $\sigma\in [0,1[$, $\theta>0$ be given,
 and set $k=1$;
\item[(1)] choose $\lambda_k>0$, and find $x_k,v_k\in \HH$, $\varepsilon_k\geq 0$
	  such that
       \begin{align}
  \label{eq:hy.inc}
  &v_k \in\partial_{\varepsilon_k} f(x_k),\\
  \label{eq:hy.res}
  &\norm{\lambda_kv_k+x_k-x_{k-1}}^2+2\lambda_k\varepsilon_k\leq
  \sigma^2\norm{x_k-x_{k-1}}^2,
  \\
  \label{eq:bigstep}
  &\lambda_k\norm{x_k-x_{k-1}}\geq \theta \mbox{ or } v_k=0;
\end{align}
\item[(2)] if $v_k=0$ then {\sc STOP} and output $x_k$; otherwise
let $k\leftarrow k+1$ and go to step 1.
\end{itemize}
\noindent
{\bf end}
\end{minipage}
}

\bigskip

We now make some comments about Algorithm 1.
First, the error tolerance \eqref{eq:hy.inc}-\eqref{eq:hy.res} is a
particular case of the relative error tolerance for the
HPE/Projection method introduced in~\cite{So-Sv1,So-Sv2}, but here we are not performing an
extragradient step, while the inequality in~\eqref{eq:bigstep} was used/introduced by Monteiro and Svaiter
in~\cite{MS1,MS2}. Second, as in the recent literature on the HPE method, we assume
that the vectors and scalars in step (1) are given by a black-box. Concrete instances of such a black-box
would depend on the particular implementation of the method. We refer the reader to the next section, where it
is shown that (in the smooth case) a single Newton step for the proximal subproblem provides scalars and vectors
satisfying all the conditions of step (1).

From now on in this section, $\{x_k\}$, $\{v_k\}$,
$\{\varepsilon_k\}$ and $\{\lambda_k\}$  are sequences
generated by Algorithm 1.  These sequences may be finite or infinite.
The provision for $v_k=0$ is in~\eqref{eq:bigstep} because, in this case, $x_{k-1}$ is
already a minimizer of $f$, as proved in the sequel.
%

\begin{proposition}
  \label{pr:bas}
	For $x_0\in \HH$, assume that iteration $k\geq 1$ of Algorithm 1 is reached (so that $\lambda_k$, $x_k$, $v_k$ and
  $\varepsilon_k$ are generated). Then, the following statements hold:
	\begin{enumerate}
  \item[(a)]\label{it:b11}
    $ f(x')\geq f(x_k)+\inner{v_k}{x'-x_k}
    -\varepsilon_k \qquad \forall x'\in \HH$;
  \item[(b)] \label{it:b22}
    $ \displaystyle f(x_{k-1})\geq f(x_k)+
    \frac{\lambda_k}{2} \norm{v_k}^2+\frac{1-\sigma^2}{2\lambda_k}
    \norm{x_k-x_{k-1}}^2\geq f(x_k)$;
  \item[(c)]\label{it:b33}
    $ (1+\sigma)\norm{x_k-x_{k-1}} \geq \norm{\lambda_k v_k}
    \geq (1-\sigma)\norm{x_k-x_{k-1}}$;
  \item[(d)]\label{it:b44} $\displaystyle
    \varepsilon_k\leq \frac{\sigma^2}{2(1-\sigma)}
    \norm{v_k}\,\norm{x_k-x_{k-1}}$;
  \end{enumerate}
and
\begin{align}
\label{eq:pr:bas}
     \frac{\lambda_k}{2} \norm{v_k}^2+\frac{1-\sigma^2}{2\lambda_k}
    \norm{x_k-x_{k-1}}^2 \geq
 \max \left\{
      \norm{v_k}^{3/2}\sqrt{\theta(1-\sigma)},
      \frac{1-\sigma}{\lambda_k} \norm{x_k-x_{k-1}}^2 \right\}.
  \end{align}
   \item[(e)]\label{it:b55} Suppose $\inf f > - \infty$. Then $ \sum \frac{1}{{\lambda_k}^{3}} < + \infty $; as a consequence, if the sequences $\{\lambda_k\}$, $\{x_k\}$ etc. are infinite, then $\lambda_k \to + \infty$ as $k \to \infty$.

\end{proposition}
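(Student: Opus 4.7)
The plan is to recognize that Proposition~\ref{pr:bas} is essentially a corollary of Proposition~\ref{pr:ie} applied iteration-by-iteration, together with a telescoping argument for part (e). The key observation is that conditions \eqref{eq:hy.inc}--\eqref{eq:hy.res} in step (1) of Algorithm~1 are exactly the conditions \eqref{eq:is} of Proposition~\ref{pr:ie} with the identifications $x = x_{k-1}$, $y = x_k$, $v = v_k$, $\lambda = \lambda_k$, $\varepsilon = \varepsilon_k$. Thus parts (a)--(d) follow by a direct application of the corresponding items (a)--(d) of Proposition~\ref{pr:ie}, with no further work required.

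For the compound inequality \eqref{eq:pr:bas}, I would apply \eqref{eq:pr:ie} from Proposition~\ref{pr:ie}, which gives the lower bound
\[
 \max \bigl\{\norm{v_k}^{3/2}\sqrt{\lambda_k\norm{x_k-x_{k-1}}(1-\sigma)},\;\; (1-\sigma)\norm{x_k-x_{k-1}}^2/\lambda_k \bigr\}.
\]
The only new ingredient is the large-step condition \eqref{eq:bigstep}: when $v_k \neq 0$ it asserts $\lambda_k\norm{x_k-x_{k-1}}\geq\theta$, which converts the factor $\lambda_k\norm{x_k-x_{k-1}}$ inside the square root into $\theta$, producing the stated lower bound $\norm{v_k}^{3/2}\sqrt{\theta(1-\sigma)}$. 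The second term of the max is already in the desired form. The case $v_k=0$ is trivial since both sides vanish (using (c)).

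Part (e) is where the argument genuinely uses the interaction between (b), (c), and the large-step condition. The plan is to telescope the inequality in (b) from $i=1$ to $k$, which yields
\[
f(x_0)-\inf f \;\geq\; f(x_0)-f(x_k)\;\geq\;\sum_{i=1}^{k}\left[\frac{\lambda_i}{2}\norm{v_i}^2+\frac{1-\sigma^2}{2\lambda_i}\norm{x_i-x_{i-1}}^2\right].
\]
By \eqref{eq:pr:bas} each summand is at least $\sqrt{\theta(1-\sigma)}\,\norm{v_i}^{3/2}$. To replace $\norm{v_i}$ by $1/\lambda_i^2$, I combine the lower bound in (c), namely $\lambda_i\norm{v_i}\geq(1-\sigma)\norm{x_i-x_{i-1}}$, with the large-step condition to obtain $\lambda_i^2\norm{v_i}\geq(1-\sigma)\theta$, hence $\norm{v_i}^{3/2}\geq[(1-\sigma)\theta]^{3/2}/\lambda_i^3$. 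Substituting back gives
\[
f(x_0)-\inf f \;\geq\; \sqrt{\theta(1-\sigma)}\,[(1-\sigma)\theta]^{3/2}\sum_{i=1}^{k}\frac{1}{\lambda_i^3},
\]
which proves summability of $\sum 1/\lambda_k^3$. Since the general term of a convergent series tends to zero, $1/\lambda_k^3\to 0$, and therefore $\lambda_k\to+\infty$. There is no real obstacle here; the only subtle point is the clean way the large-step condition and the $\varepsilon$-subdifferential error criterion combine to yield a cubic rate of decrease, and I would be careful to handle the degenerate alternative $v_k=0$ of \eqref{eq:bigstep} separately (in which case the algorithm terminates and there is nothing to prove).
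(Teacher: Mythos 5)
Your proposal is correct throughout; parts (a)--(d) and the compound inequality \eqref{eq:pr:bas} follow from Proposition~\ref{pr:ie} exactly as you describe, which matches the paper. For part (e), however, you take a slightly different path than the paper. You telescope (b), then use the first branch of the max in \eqref{eq:pr:bas} to lower-bound each summand by $\sqrt{\theta(1-\sigma)}\,\norm{v_i}^{3/2}$, and then chain (c) with the large-step condition to get $\norm{v_i}\geq(1-\sigma)\theta/\lambda_i^2$, hence $\norm{v_i}^{3/2}\geq[(1-\sigma)\theta]^{3/2}/\lambda_i^3$. The paper is more economical: it simply discards the $\frac{\lambda_j}{2}\norm{v_j}^2$ term from (b) to deduce $\sum\norm{x_j-x_{j-1}}^2/\lambda_j<\infty$, and then applies the large-step inequality $\norm{x_k-x_{k-1}}^2\geq\theta^2/\lambda_k^2$ to obtain $\sum 1/\lambda_k^3<\infty$ directly, without invoking (c) or \eqref{eq:pr:bas}. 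Both routes are valid; yours requires two ingredients (item (c) plus the large-step condition) where the paper needs only one, but the cubic decay constant you extract, $\sqrt{\theta(1-\sigma)}[(1-\sigma)\theta]^{3/2}$, is also a legitimate (if different) explicit bound. Your remark about handling the $v_k=0$ branch of \eqref{eq:bigstep} separately is sound and, if anything, slightly more careful than the paper's treatment.
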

\begin{proof}
Items \textit{ (a)}, \textit{(b)}, \textit{(c)}, and \textit{(d)} follow directly from Proposition~\ref{pr:ie}
and Algorithm 1's definition. To prove \textit{(e)}, first notice that \textit{(b)} implies, for any $j\geq 1$
\[
f(x_{j-1})\geq f(x_j) +   \frac{1-\sigma^2}{2\lambda_j} \norm{x_j-x_{j-1}}^2.
\]
Summing this inequality from $j=1$ to $k$, we obtain
\[
f(x_0)\geq f(x_k) +   \frac{1-\sigma^2}{2} \sum_{j=1}^k \frac{\norm{x_j-x_{j-1}}^2}{\lambda_j}.
\]
Note that, in order Algorithm 1 to be defined, we need to take $x_0 \in \mbox{dom} f$, i.e., $f(x_0) <+\infty$. Since, by assumption,  $\inf f > - \infty$, and $\sigma <1$, we deduce that
\begin{equation}\label{eq:sum1}
 \sum_{k} \frac{\norm{x_k-x_{k-1}}^2}{\lambda_k} < + \infty.
\end{equation}
On the other hand, by definition of Algorithm 1, \eqref{eq:bigstep}, we have 
 $\lambda_k\norm{x_k-x_{k-1}}\geq \theta $. Equivalently,  $\norm{x_k-x_{k-1}}^2\geq \frac{\theta^2}{{\lambda_k}^2} $. Combining this inequality with \eqref{eq:sum1}, and $\theta >0$, we obtain
 \begin{equation}\label{eq:sum2}
 \sum_{k} \frac{1}{{\lambda_k}^3} < + \infty.
\end{equation}
\end{proof}
Suppose now that Algorithm 1 generates infinite sequences.
Any convergence result valid under this
assumption is valid in the general case, with the provision ``or a
solution is reached in a finite number of iterations''.
We are ready to analyze the (global) rate of convergence and the iteration complexity of Algorithm 1.
To this end, let $\mathcal{D}_0$ be the diameter of the level set $[f\leq f(x_0)]$, that is,
\begin{align}
  \label{eq:diameter}
 \mathcal{D}_0	=&\sup\{ \norm{x-y}\;|\; \max\{f(x),f(y)\}\leq f(x_0)\}.
\end{align}

\begin{theorem}
  \label{th:gen}
  Assume that $ \mathcal{D}_0<\infty$, let $\bar x$ be
	a solution of~\eqref{eq:p2} and define
  \begin{equation}
	\label{eq:def.dh}
    \widehat D=\mathcal{D}_0
    \left[ 1+\frac{\sigma^2}{2(1-\sigma)}
    \right],\;\;\;
    \kappa=\sqrt{\frac{\theta(1-\sigma)}{\widehat D^3}}.
  \end{equation}
  Then, the following statements hold for every $k\geq 1$:
  \begin{enumerate}
    \item[(a)]
      \label{it:g.1}
      $\norm{v_k}\widehat D\geq
      f(x_k)-f(\bar x)$;
    \item[(b)]
      \label{it:g.2}
      $f(x_k) \leq f(x_{k-1})-\dfrac{2\kappa(f(x_{k-1})-f(\bar x))^{3/2}}
      {2+3\kappa(f(x_{k-1})-f(\bar x))^{1/2}}$;
      \item[(c)]
      \label{it:g.3}
 $\displaystyle
  f(x_k)-f(\bar x)\leq
  \frac{f(x_0)-f(\bar x)}{\left[1+k
    \dfrac{\kappa\sqrt{f(x_0)-f(\bar x)}}{2+3\kappa\sqrt{f(x_0)-f(\bar x)}}
    \right]^2}={\cal O}(1/k^2).
  $
     \end{enumerate}
   Moreover,
  for each $k\geq 2$ even, there exists $j\in\{k/2+1,\dots,k\}$ such that
  \begin{equation}
    \norm{ v_j}\leq
    \dfrac{4}{\sqrt[3]{\theta(1-\sigma)}}
  \left[\frac{f(x_0)-f(\bar x)}{k
  \left[2+k\dfrac{\kappa\sqrt{f(x_0)-f(\bar x)}}
{2+3\kappa\sqrt{f(x_0)-f(\bar x)}}\right]^2}
  \right]^{2/3}
  =\mathcal{O}(1/k^2)
    \label{eq:th.nabla}
  \end{equation}
  and
  \begin{align}
	\label{eq:th.nabla2}
  \varepsilon_j\leq \frac{4\sigma^2}{(1-\sigma)}
  \frac{f(x_0)-f(\bar x)}
  {k\left[2+k\dfrac{\kappa\sqrt{f(x_0)-f(\bar x)}}
{2+3\kappa\sqrt{f(x_0)-f(\bar x)}}\right]^2}
  =\mathcal{O}(1/k^3).
  \end{align}
\end{theorem}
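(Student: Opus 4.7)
The plan is to mimic, in discrete form, the argument used for the continuous version in Theorem \ref{th:complexity}, with the key additional input being that every iterate lies in the sublevel set $[f\leq f(x_0)]$. Since Proposition~\ref{pr:bas}(b) gives $f(x_k)\leq f(x_{k-1})\leq\cdots\leq f(x_0)$, any two iterates $x_k,x_j$ and the minimizer $\bar x$ all lie in $[f\leq f(x_0)]$, so their pairwise distances are bounded by $\mathcal{D}_0$. This is the only new ingredient beyond the per-iteration estimates already collected in Proposition~\ref{pr:bas}.

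\emph{Part (a) and (b).} For (a), apply the $\varepsilon_k$-subdifferential inequality in Proposition~\ref{pr:bas}(a) at $x'=\bar x$:
\begin{equation*}
f(x_k)-f(\bar x)\leq \inner{v_k}{x_k-\bar x}+\varepsilon_k
\leq \norm{v_k}\,\norm{x_k-\bar x}+\varepsilon_k.
\end{equation*}
Bound $\norm{x_k-\bar x}\leq\mathcal{D}_0$ and, using Proposition~\ref{pr:bas}(d) together with $\norm{x_k-x_{k-1}}\leq\mathcal{D}_0$, bound $\varepsilon_k\leq\tfrac{\sigma^2}{2(1-\sigma)}\norm{v_k}\mathcal{D}_0$. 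Collecting gives $f(x_k)-f(\bar x)\leq\norm{v_k}\widehat D$. For (b), the key telescoping estimate is obtained by combining Proposition~\ref{pr:bas}(b) with the first lower bound in \eqref{eq:pr:bas}:
\begin{equation*}
f(x_{k-1})-f(x_k)\geq \sqrt{\theta(1-\sigma)}\,\norm{v_k}^{3/2}.
\end{equation*}
Substituting the lower bound $\norm{v_k}\geq (f(x_k)-f(\bar x))/\widehat D$ from (a) yields $f(x_{k-1})-f(\bar x)\geq (f(x_k)-f(\bar x))+\kappa(f(x_k)-f(\bar x))^{3/2}$. This is exactly the hypothesis of Lemma~\ref{lm:aux1} with $a=f(x_{k-1})-f(\bar x)$, $b=f(x_k)-f(\bar x)$, $c=\kappa$, which delivers the inequality in (b) after rewriting $\kappa a^{3/2}/(1+(3\kappa/2)a^{1/2})$ as $2\kappa a^{3/2}/(2+3\kappa a^{1/2})$.

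\emph{Part (c).} Set $\beta_k=f(x_k)-f(\bar x)$ and assume $\beta_k>0$ (otherwise the conclusion is trivial from that index on). Part (b) together with $\beta_{k-1}\leq\beta_0$ gives
\begin{equation*}
\beta_{k-1}-\beta_k\geq \frac{2\kappa\,\beta_{k-1}^{3/2}}{2+3\kappa\beta_0^{1/2}}=\tilde\kappa\,\beta_{k-1}^{3/2},
\qquad
\tilde\kappa:=\frac{2\kappa}{2+3\kappa\beta_0^{1/2}}.
\end{equation*}
Put $u_k=\beta_k^{-1/2}$; then $u_k\geq u_{k-1}$ and
\begin{equation*}
(u_k-u_{k-1})(u_k+u_{k-1})=u_k^2-u_{k-1}^2=\frac{\beta_{k-1}-\beta_k}{\beta_k\beta_{k-1}}
\geq \frac{\tilde\kappa\,\beta_{k-1}^{1/2}}{\beta_k}\geq \tilde\kappa\,u_k,
\end{equation*}
so $(u_k+u_{k-1})\leq 2u_k$ yields $u_k-u_{k-1}\geq\tilde\kappa/2$. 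Summing from $1$ to $k$ produces $\beta_k^{-1/2}\geq\beta_0^{-1/2}+k\tilde\kappa/2$, i.e.\ the asserted bound after substituting $\tilde\kappa\sqrt{\beta_0}/2=\kappa\sqrt{\beta_0}/(2+3\kappa\sqrt{\beta_0})$.

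\emph{Bounds on $\norm{v_j}$ and $\varepsilon_j$.} For the $v_j$ bound, telescope the estimate $f(x_{j-1})-f(x_j)\geq\sqrt{\theta(1-\sigma)}\,\norm{v_j}^{3/2}$ for $j=k/2+1,\dots,k$:
\begin{equation*}
\sqrt{\theta(1-\sigma)}\sum_{j=k/2+1}^{k}\norm{v_j}^{3/2}\leq f(x_{k/2})-f(x_k)\leq f(x_{k/2})-f(\bar x).
\end{equation*}
The pigeonhole principle on the $k/2$ terms gives an index $j$ with $\norm{v_j}^{3/2}\leq \tfrac{2}{k\sqrt{\theta(1-\sigma)}}(f(x_{k/2})-f(\bar x))$; plug in the bound on $f(x_{k/2})-f(\bar x)$ from (c) and raise to the $2/3$ power. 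For $\varepsilon_j$, combine Proposition~\ref{pr:bas}(d) with the AM--GM-type estimate
$\tfrac{\lambda_j}{2}\norm{v_j}^2+\tfrac{1-\sigma^2}{2\lambda_j}\norm{x_j-x_{j-1}}^2\geq\sqrt{1-\sigma^2}\,\norm{v_j}\norm{x_j-x_{j-1}}$,
to get $\varepsilon_j\leq C_\sigma\,(f(x_{j-1})-f(x_j))$ for an explicit constant depending only on $\sigma$; summing over $j\in\{k/2+1,\dots,k\}$ and applying (c) at index $k/2$ again, with the pigeonhole, produces the $O(1/k^3)$ bound.

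The only delicate point is matching the constants in the final two displayed inequalities: the choice of splitting point $k/2$ and the algebraic identity $[1+(k/2)\tilde\kappa\sqrt{\beta_0}/2]^2=\tfrac14[2+k\tilde\kappa\sqrt{\beta_0}/2]^2$ account for the factor~$4$, while the bookkeeping of the $\sigma$-dependent constant in front of $\varepsilon_j$ requires careful use of both items (c) and (d) of Proposition~\ref{pr:bas}. Beyond that, every step is a direct telescoping coupled with an application of part (c).
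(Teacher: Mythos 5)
Your parts (a), (b), and (c) are correct and follow the paper's proof, with one small, harmless deviation in (c): you re-derive the conclusion of Lemma~\ref{lm:tch2} by hand via the difference-of-squares manipulation on $u_k=\beta_k^{-1/2}$, instead of invoking the lemma (which proceeds via convexity of $t\mapsto 1/\sqrt{t}$). The two arguments are equivalent and give the same constant.

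The final two displayed bounds are where the proposal has genuine gaps. First, the theorem requires a \emph{single} $j\in\{k/2+1,\dots,k\}$ for which \emph{both} the $\norm{v_j}$ bound and the $\varepsilon_j$ bound hold, but you run two separate pigeonhole arguments: one on the telescoped sum of $\norm{v_i}^{3/2}$ and another on the telescoped sum of $\varepsilon_i$ (equivalently, of $f(x_{i-1})-f(x_i)$). These may select different indices. The paper avoids this by pigeonholing once on the telescoped sum of
$\Phi_i:=\tfrac{\lambda_i}{2}\norm{v_i}^2+\tfrac{1-\sigma^2}{2\lambda_i}\norm{x_i-x_{i-1}}^2$,
picking the $j$ minimizing $\Phi_i$, and then using that by~\eqref{eq:pr:bas} and~\eqref{eq:hy.res} this single $\Phi_j$ simultaneously dominates $\norm{v_j}^{3/2}\sqrt{\theta(1-\sigma)}$ and $\tfrac{2(1-\sigma)}{\sigma^2}\varepsilon_j$.

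Second, even if you fixed the indexing issue, the AM--GM route
$\Phi_j\geq\sqrt{1-\sigma^2}\,\norm{v_j}\norm{x_j-x_{j-1}}$ combined with Proposition~\ref{pr:bas}(d) only yields
$\varepsilon_j\leq \tfrac{\sigma^2}{2(1-\sigma)\sqrt{1-\sigma^2}}\bigl(f(x_{j-1})-f(x_j)\bigr)$,
whose constant exceeds the paper's $\tfrac{\sigma^2}{2(1-\sigma)}$ by the factor $(1-\sigma^2)^{-1/2}>1$, so the stated coefficient $\tfrac{4\sigma^2}{1-\sigma}$ in~\eqref{eq:th.nabla2} would not be achieved. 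The sharp constant comes from the second term of the $\max$ in~\eqref{eq:pr:bas}, namely $\Phi_j\geq\tfrac{1-\sigma}{\lambda_j}\norm{x_j-x_{j-1}}^2\geq\tfrac{2(1-\sigma)}{\sigma^2}\varepsilon_j$, the last step being a direct consequence of the error criterion~\eqref{eq:hy.res}; there is no role for AM--GM here.
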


\begin{proof}
\textit{(a)} In view of Proposition~\ref{pr:bas}\textit{(b)}
and the fact that $\bar x$ is a solution of~\eqref{eq:p2}
we have $\max\{f(x_k),f(\bar x)\}\leq f(x_0)$ for all $k\geq 0$.
As a consequence of the latter inequality and~\eqref{eq:diameter}
we find
\begin{align}
\label{eq:51}
  \max\{\norm{\bar x-x_{k-1}},\norm{x_k-x_{k-1}}\}\leq \mathcal{D}_0\quad \forall k\geq 1.
\end{align}
 Using Proposition~\ref{pr:bas}\textit{(a)} with $x'=\bar x$,
 Proposition~\ref{pr:bas}\textit{(d)} and the Cauchy-Schwarz
inequality we conclude that
	\begin{align*}
    f(x_k)-f(\bar x) \leq \inner{v_k}{x_k-\bar x}+\varepsilon_k
    & \leq
    \norm{v_k}\norm{x_k-\bar x}+\frac{\sigma^2}{2(1-\sigma)}
    \norm{v_k}\,\norm{x_k-x_{k-1}}\quad \forall k\geq 1,
  \end{align*}
	which in turn combined with~\eqref{eq:51}
	and the definition of $\widehat D$ in~\eqref{eq:def.dh} proves \textit{(a)}.

	\textit{(b)} By Proposition~\ref{pr:bas}\textit{(b)},~\eqref{eq:pr:bas}, the above item \textit{(a)},
	and the definition of $\kappa$ in~\eqref{eq:def.dh}
	we have for all $k\geq 1$:
  \begin{align}
	\label{eq:54}
  f(x_{k-1})-f(\bar x)
	\nonumber
  &\geq f(x_k)-f(\bar x)+\norm{v_k}^{3/2}\sqrt{\theta(1-\sigma)}\\
  &\geq f(x_k)-f(\bar x)+\kappa(f(x_k)-f(\bar x))^{3/2}.
  \end{align}
	Using the latter inequality and Lemma~\ref{lm:aux1}
	(for each $k\geq 1$) with $b=f(x_k)-f(\bar x)$,
	$a=f(x_{k-1})-f(\bar x)$ and $c=\kappa$ we obtain
	\begin{align}
	\label{eq:52}
	 f(x_k)-f(\bar x)\leq f(x_{k-1})-f(\bar x)-\dfrac{\kappa(f(x_{k-1})-f(\bar x))^{3/2}}{1+(3\kappa/2)(f(x_{k-1})-f(\bar x))^{1/2}}\quad \forall k\geq 1,
	\end{align}
	which in turn proves \textit{(b)}.

	\textit{(c)} Defining $a_k :=f(x_k)-f(\bar x)$, $\tau:=2\kappa/(2+3\kappa a_0)$, and using the second inequality in           Proposition~\ref{pr:bas}\textit{(b)}
	and~\eqref{eq:52}, we conclude that
	\[
	 a_k\leq a_{k-1}-\tau a_{k-1}^{3/2}\quad \forall k\geq 1,
	\]
	which leads to \textit{(c)}, by direct application of  Lemma~\ref{lm:tch2} (see Appendix).

	To prove the last statement of the theorem, assume that
	$k\geq 2$ is even. Using the first inequality in Proposition~\ref{pr:bas}\textit{(b)},
	we obtain
	 \begin{align}
	\label{eq:53}
   f(x_{k/2})-f(x_{k})=\sum_{i=k/2+1}^k f(x_{i-1})-f(x_{i})
   \geq  \sum_{i=k/2+1}^k
	\frac{\lambda_i}{2} \norm{v_i}^2+\frac{1-\sigma^2}{2\lambda_i}
	\norm{x_i-x_{i-1}}^2.
   \end{align}
	Taking $j\in \{k/2+1,\dots,k\}$ which minimizes the general term
  in the second sum of the latter inequality, and using the fact that
	$\bar x$ is a solution of~\eqref{eq:p2},
	we have
   \begin{align*}
	f(x_{k/2})-f(\bar x)\geq
       \dfrac{k}{2}\left[
	\frac{\lambda_j}{2} \norm{v_j}^2+\frac{1-\sigma^2}{2\lambda_j}
	\norm{x_j-x_{j-1}}^2\right],
   \end{align*}
	which, in turn, combined with~\eqref{eq:pr:bas} and~\eqref{eq:hy.res} gives
	 \begin{align*}
	 \frac{f(x_{k/2})-f(\bar x)}{k/2}&\geq \max\left\{\norm{v_j}^{3/2}\sqrt{\theta(1-\sigma)},
				\frac{1-\sigma}{\lambda_j}\norm{x_j-x_{j-1}}^2\right\}\\
				&\geq \max\left\{\norm{v_j}^{3/2}\sqrt{\theta(1-\sigma)},
				\frac{2(1-\sigma)}{\sigma^2}\varepsilon_j\right\}.
   \end{align*}
	Combining the latter inequality with \textit{(c)}, and using some trivial algebraic manipulations, we obtain~\eqref{eq:th.nabla}
	and~\eqref{eq:th.nabla2}, which finishes the proof of the theorem.
   \end{proof}

We now prove that if $\varepsilon_k=0$ in Algorithm 1, then better complexity
bounds can be obtained.

\begin{theorem}
  \label{th:e0}
  Assume that $\mathcal{D}_0<\infty$, and $\varepsilon_k=0$ for all $k\geq 1$.
	Let $\bar x$ be a solution of~\eqref{eq:p2} and define
	\begin{align*}
  \kappa_0=\sqrt{\frac{\theta(1-\sigma)}{\mathcal{D}_0^3}}.
  \end{align*}
	Then, the following statements hold for all $k\geq 1$:
  \begin{enumerate}
    \item[(a)]
      \label{it:e0.1}
      $\norm{v_k} \mathcal{D}_0\geq f(x_k)-f(\bar x)$;
    \item[(b)]
      \label{it:e0.2}
      $ f(x_k) \leq f(x_{k-1})-\dfrac{2\kappa_0(f(x_{k-1})-f(\bar x))^{3/2}}
      {2+3\kappa_0(f(x_{k-1})-f(\bar x))^{1/2}}$;
    \item[(c)]
      \label{it:e0.3}
 $\displaystyle
    f(x_k)-f(\bar x)\leq
  \frac{f(x_0)-f(\bar x)}{\left[1+k
    \dfrac{\kappa_0\sqrt{f(x_0)-f(\bar x)}}{2+3\kappa_0\sqrt{f(x_0)-f(\bar x)}}
    \right]^2}={\cal O}(1/k^2).
  $
   \end{enumerate}
   Moreover,
  for each $k\geq 2$ even, there exists $j\in\{k/2+1,\dots,k\}$ such that
  \begin{equation}
    \norm{ v_j}\leq
    \dfrac{4}{\sqrt[3]{\theta(1-\sigma)}}
  \left[\frac{f(x_0)-f(\bar x)}{k
  \left[2+k\dfrac{\kappa_0\sqrt{f(x_0)-f(\bar x)}}
{2+3\kappa_0 \sqrt{f(x_0)-f(\bar x)}}\right]^2}
  \right]^{2/3}
  =\mathcal{O}(1/k^2).
    \label{eq:th2.nabla}
  \end{equation}
\end{theorem}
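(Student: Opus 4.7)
The plan is to mirror the proof of Theorem \ref{th:gen} while exploiting the hypothesis $\varepsilon_k = 0$ to eliminate the correction term in the definition of $\widehat D$, thereby replacing $\widehat D$ by $\mathcal{D}_0$ and $\kappa$ by $\kappa_0$ throughout. The four conclusions (a)--(c) and \eqref{eq:th2.nabla} then follow by essentially the same calculations, and the main task is just to track the constants correctly.

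For (a), I would apply Proposition~\ref{pr:bas}(a) at $x' = \bar x$ to get $f(x_k) - f(\bar x) \leq \inner{v_k}{x_k - \bar x} - \varepsilon_k$. Because $\varepsilon_k = 0$, the extra slack term $\frac{\sigma^2}{2(1-\sigma)}\norm{v_k}\norm{x_k - x_{k-1}}$ coming from Proposition~\ref{pr:bas}(d) in the proof of Theorem \ref{th:gen} no longer appears; Cauchy--Schwarz alone gives $f(x_k) - f(\bar x) \leq \norm{v_k}\norm{x_k - \bar x}$. The monotonicity of $\{f(x_k)\}$ from Proposition~\ref{pr:bas}(b) places both $x_k$ and $\bar x$ in the level set $[f \leq f(x_0)]$, so $\norm{x_k - \bar x} \leq \mathcal{D}_0$ by definition of $\mathcal{D}_0$, completing (a).

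For (b), combining Proposition~\ref{pr:bas}(b) with the large-step bound \eqref{eq:pr:bas} yields
\[
f(x_{k-1}) - f(\bar x) \geq f(x_k) - f(\bar x) + \norm{v_k}^{3/2}\sqrt{\theta(1-\sigma)},
\]
and substituting the lower bound $\norm{v_k} \geq (f(x_k) - f(\bar x))/\mathcal{D}_0$ from (a) gives
\[
f(x_{k-1}) - f(\bar x) \geq (f(x_k) - f(\bar x)) + \kappa_0 (f(x_k) - f(\bar x))^{3/2}.
\]
Invoking Lemma~\ref{lm:aux1} with $a = f(x_{k-1}) - f(\bar x)$, $b = f(x_k) - f(\bar x)$, and $c = \kappa_0$ produces (b).

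For (c), set $a_k = f(x_k) - f(\bar x)$. Item (b) together with the monotonicity of $\{a_k\}$ gives a recursion of the form $a_k \leq a_{k-1} - \tau_0 \, a_{k-1}^{3/2}$ with $\tau_0 = 2\kappa_0/(2 + 3\kappa_0 \sqrt{a_0})$, to which Lemma~\ref{lm:tch2} applies and yields the claimed closed form. Finally, \eqref{eq:th2.nabla} is obtained exactly as at the end of the proof of Theorem~\ref{th:gen}: telescope the first inequality of Proposition~\ref{pr:bas}(b) from $k/2+1$ to $k$, pick $j \in \{k/2+1,\dots,k\}$ minimizing $\tfrac{\lambda_j}{2}\norm{v_j}^2 + \tfrac{1-\sigma^2}{2\lambda_j}\norm{x_j - x_{j-1}}^2$, bound this quantity below by $\norm{v_j}^{3/2}\sqrt{\theta(1-\sigma)}$ via \eqref{eq:pr:bas}, and substitute the $\bigo(1/k^2)$ bound from (c). No new obstacle arises; the only point requiring care is the bookkeeping that replaces $\widehat D$ by $\mathcal{D}_0$ in every constant.
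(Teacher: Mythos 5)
Your proposal is correct and follows essentially the same route as the paper's own proof: reuse the Theorem~\ref{th:gen} machinery, with $\varepsilon_k=0$ removing the $\frac{\sigma^2}{2(1-\sigma)}\norm{v_k}\norm{x_k-x_{k-1}}$ slack so that $\widehat D$ collapses to $\mathcal{D}_0$ and $\kappa$ to $\kappa_0$. (Minor slip: Proposition~\ref{pr:bas}(a) at $x'=\bar x$ gives $f(x_k)-f(\bar x)\leq\inner{v_k}{x_k-\bar x}+\varepsilon_k$, not $-\varepsilon_k$, but this is immaterial since $\varepsilon_k=0$.)
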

\begin{proof}
By the same reasoning as in the proof of Theorem~\ref{th:gen}\textit{(a)}
we obtain~\eqref{eq:51}
and
\begin{align*}
 f(x_k)-f(\bar x)\leq \norm{v_k}\norm{x_k-\bar x}\leq \norm{v_k}\mathcal{D}_0 \quad \forall k\geq 1,
\end{align*}
which in turn proves \textit{(a)}. Using \textit{(a)}, the definition of $\kappa_0$,
and the same reasoning as in the proof of Theorem~\ref{th:gen}\textit{(b)}
we deduce that~\eqref{eq:54} holds with $\kappa_0$
in the place of $\kappa$.
The rest of the proof is analogous to that of Theorem~\ref{th:gen}.
\end{proof}

In the next corollary, we prove that Algorithm 1 is able to find approximate solutions
of the problem~\eqref{eq:p2} in at most ${\cal O}(1/\sqrt{\varepsilon})$
iterations.

\begin{corollary}
  \label{cr:cp.g}
  Assume that all the assumptions of Theorem~\ref{th:e0} hold, and let
	$\varepsilon>0$ be a given tolerance.
  Define
  \begin{align}
	\label{eq:def.k.j}
    K=\dfrac{2+3\kappa_0\sqrt{f(x_0)-f(\bar x)}}{\kappa_0\sqrt{\varepsilon}},
    \quad
    J=\dfrac{2}{(\theta(1-\sigma))^{1/6}}\;
    \dfrac{(2+3\kappa_0\sqrt{f(x_0)-f(\bar x)})^{2/3}}
    {\kappa_0^{1/3}\sqrt{\varepsilon}}.
  \end{align}
  Then, the following statements hold:
  \begin{enumerate}
  \item[(a)] for any $k\geq K$, $f(x_k)-f(\bar x)\leq\varepsilon$;
  \item[(b)] there exists $j\leq 2\left\lceil J\right\rceil$ such that $\norm{v_j}\leq\varepsilon$.
  \end{enumerate}
\end{corollary}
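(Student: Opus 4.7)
My plan is to derive both parts directly by inverting the rate estimates in Theorem~\ref{th:e0}. To lighten notation, set $\Delta_0 := f(x_0)-f(\bar x)$ and $\rho_* := \kappa_0\sqrt{\Delta_0}/(2+3\kappa_0\sqrt{\Delta_0})$, so that $1/\rho_* = (2+3\kappa_0\sqrt{\Delta_0})/(\kappa_0\sqrt{\Delta_0})$ and, consequently, $K = \sqrt{\Delta_0/\varepsilon}/\rho_*$.

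For part (a), I would apply Theorem~\ref{th:e0}(c), which reads $f(x_k)-f(\bar x) \leq \Delta_0/(1+k\rho_*)^2$. To force the right-hand side below $\varepsilon$ it suffices to require $1+k\rho_* \geq \sqrt{\Delta_0/\varepsilon}$, which is certainly implied by $k\rho_* \geq \sqrt{\Delta_0/\varepsilon}$, i.e.\ by $k\geq \sqrt{\Delta_0/\varepsilon}/\rho_*$. Plugging in the explicit form of $1/\rho_*$ shows that this last quantity equals exactly $K$, completing part (a).

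For part (b), I would set $k := 2\lceil J\rceil$, which is even and at least $2$, and invoke the ``moreover'' clause of Theorem~\ref{th:e0} (inequality \eqref{eq:th2.nabla}) to obtain $j\in\{k/2+1,\dots,k\}$ with
\[
\norm{v_j}\leq \frac{4}{c}\left[\frac{\Delta_0}{k(2+k\rho_*)^2}\right]^{2/3},\qquad c := \sqrt[3]{\theta(1-\sigma)};
\]
in particular $j\leq 2\lceil J\rceil$. The required inequality $\norm{v_j}\leq \varepsilon$ is equivalent to $k(2+k\rho_*)^2 \geq 8\Delta_0/(c\varepsilon)^{3/2}$. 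Using the trivial lower bound $(2+k\rho_*)^2 \geq (k\rho_*)^2$, it suffices to verify $k^3\rho_*^2 \geq 8\Delta_0/(c\varepsilon)^{3/2}$, which rearranges to $k \geq 2\Delta_0^{1/3}/(\rho_*^{2/3}(c\varepsilon)^{1/2})$. The identity $\Delta_0^{1/3}/\rho_*^{2/3} = (2+3\kappa_0\sqrt{\Delta_0})^{2/3}/\kappa_0^{2/3}$, together with $c^{1/2} = (\theta(1-\sigma))^{1/6}$ and the definition of $\kappa_0$, identifies this threshold with $2J$.

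The main obstacle I anticipate is matching the exponent of $\kappa_0$ appearing in the stated $J$: the crude bound $(2+k\rho_*)^2\geq(k\rho_*)^2$ alone naturally yields a threshold of the form $(\cdot)/\kappa_0^{2/3}$, whereas $J$ carries $\kappa_0^{1/3}$ in the denominator. Depending on the regime of $\kappa_0$, one likely needs to combine this with the complementary AM--GM estimate $(2+k\rho_*)^2\geq 8k\rho_*$, which gives $k(2+k\rho_*)^2\geq 8k^2\rho_*$, and select whichever lower bound dominates at $k=2\lceil J\rceil$. This case distinction is the only delicate bookkeeping in an otherwise routine substitution; once the right lower bound on $k(2+k\rho_*)^2$ is secured, the rest follows by plugging in the definitions of $\kappa_0$ and $\rho_*$.
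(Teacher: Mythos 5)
Your argument for part (a) is correct and is exactly what the paper intends: invert Theorem~\ref{th:e0}(c), observe that $k\rho_*\geq\sqrt{\Delta_0/\varepsilon}$ already implies $(1+k\rho_*)^2\geq\Delta_0/\varepsilon$, and check that $\sqrt{\Delta_0/\varepsilon}/\rho_*=K$.

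For part (b) you have correctly located a genuine problem, but your proposed repair does not close it. Your primary bound $(2+k\rho_*)^2\geq(k\rho_*)^2$ at $k=2\lceil J\rceil$ yields the sufficient condition $k\geq 2\Delta_0^{1/3}/\bigl(\rho_*^{2/3}(c\varepsilon)^{1/2}\bigr)$ with $c=(\theta(1-\sigma))^{1/3}$, and as you compute, $\Delta_0^{1/3}/\rho_*^{2/3}=(2+3\kappa_0\sqrt{\Delta_0})^{2/3}/\kappa_0^{2/3}$; the resulting threshold therefore carries $\kappa_0^{2/3}$ in its denominator and is \emph{not} equal to $2J$ (it equals $2J$ only at $\kappa_0=1/8$, contrary to what you write in the sentence beginning ``The identity\dots''). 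Comparing with the stated $J$, one finds that the argument closes exactly when $\kappa_0\geq 1/8$: indeed $(2J)^3\rho_*^2=64\kappa_0\Delta_0/(c\varepsilon)^{3/2}$, which dominates $8\Delta_0/(c\varepsilon)^{3/2}$ only for $\kappa_0\geq 1/8$. Your suggested fallback via the AM--GM bound $(2+k\rho_*)^2\geq 8k\rho_*$ cannot rescue the complementary regime: it produces the requirement $k\gtrsim \Delta_0^{1/4}(2+3\kappa_0\sqrt{\Delta_0})^{1/2}/\bigl(\kappa_0^{1/2}(c\varepsilon)^{3/4}\bigr)$, which scales like $\varepsilon^{-3/4}$, a strictly worse order than $J=\mathcal{O}(\varepsilon^{-1/2})$; so for $\varepsilon$ small and $\kappa_0$ small, neither branch of the case distinction reaches $2J$.

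What is actually going on is that the exponent on $\kappa_0$ in the displayed formula for $J$ is off: replacing $\kappa_0^{1/3}$ by $\kappa_0^{2/3}$ (equivalently, using $J=2(2+3\kappa_0\sqrt{\Delta_0})^{2/3}/\bigl(\kappa_0\mathcal{D}_0^{1/2}\sqrt{\varepsilon}\bigr)$, via the identity $(\theta(1-\sigma))^{1/3}=\kappa_0^{2/3}\mathcal{D}_0$) makes $(2J)^3\rho_*^2=64\Delta_0/(c\varepsilon)^{3/2}\geq 8\Delta_0/(c\varepsilon)^{3/2}$ unconditionally, and then your first branch closes with no case distinction at all. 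The paper offers no computation for this corollary (it asserts it ``follows trivially''), so the discrepancy you ran into is in the statement, not in your method; once $J$ is corrected, your approach and the paper's intended one coincide.
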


\begin{proof}
 The proof of \textit{(a)} and \textit{(b)} follows trivially from Theorem~\ref{th:e0}\textit{(c)}
and~\eqref{eq:th2.nabla}, respectively, and from~\eqref{eq:def.k.j}.
 \end{proof}

\section{An ${\cal O}(1/\sqrt{\varepsilon})$ proximal-Newton method for
  smooth convex optimization}
\label{sec:csm}
In this section, we consider
a proximal-Newton method for solving the convex optimization problem
\begin{align}
  \label{eq:p3}
  \mbox{minimize}\,f(x)\quad \mbox{s.t.}\quad  x\in \HH,
\end{align}
where $f:\HH\to \R$, and the following assumptions are made:
\begin{description}
 \item[AS1)] $f$ is convex and twice continuously differentiable;
 \item[AS2)] the Hessian of $f$ is $L$-Lipschitz continuous, that is, there exists $L>0$
such that
  \begin{align*}
    \norm{\nabla^2f(x)-\nabla^2f(y)}\leq L\norm{x-y}\quad \forall x,y\in \HH
  \end{align*}
  where, at the left hand-side,  the operator norm is
  induced by the Hilbert norm of $\HH$;
\item[AS3)] there exists a solution of~\eqref{eq:p3}.
\end{description}
{\bf Remark.}
It follows from Assumptions {\bf AS1} and {\bf AS2} that $\nabla^2f(x)$
exists and is positive semidefinite (psd) for all $x\in \HH$, while it
follows from Assumption {\bf AS2} that
\begin{align}
  \label{eq:qerror}
  \norm{\nabla f(y)-\nabla f(x)-\nabla^2f(x)(y-x)}\leq
\dfrac{L}{2}\norm{y-x}^2\quad \forall x,y\in \HH.
\end{align}

Using assumption \textbf{AS1}, we have that an exact proximal point iteration at $x\in \HH$, with stepsize $\lambda>0$,
consists in finding $y\in \HH$ such that
\begin{align}
  \label{eq:proxgrad}
 \lambda \nabla f(y)+y-x=0 \quad \mbox{(cf.~\eqref{eq:ps})}.
\end{align}
The basic idea of our method is to perform a single Newton iteration
for the above equation from the current iterate $x$, i.e., in computing the (unique) solution $y$ of
the linear system
\begin{align*}
    \lambda (\nabla f(x)+\nabla^2 f(x)(y-x))+y-x=0 ,
\end{align*}
and defining the new iterate as  such $y$.
We will show that,
due to~\eqref{eq:qerror},
it is possible to choose $\lambda$ 
so that:
a) condition~\eqref{eq:is} is satisfied with $\varepsilon=0$ and $v=\nabla f(y)$;
b) a large-step type condition (see \eqref{eq:bigstep}) is satisfied
for $\lambda$, $x$ and $y$.
First we show that Newton step 
is well defined and find bounds for its norm.

\begin{lemma}
  \label{lm:ns0}
  For any $x\in \HH$, if  $\lambda>0$ then $\lambda \nabla^2f(x)+I$
  is nonsingular and
  \begin{align}
    \label{eq:lsis2}
    \dfrac{\lambda\norm{\nabla f(x)}}{\lambda\norm{\nabla^2f(x)}+1}
    \leq \norm{(\lambda \nabla^2f(x)+I)^{-1}\lambda\nabla f(x)}\leq\lambda\norm{\nabla f(x)}.
  \end{align}
\end{lemma}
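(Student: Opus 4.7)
The plan is to use the fact (noted in the remark just before the lemma) that under \textbf{AS1} the Hessian $\nabla^2 f(x)$ is positive semidefinite, so $\lambda\nabla^2 f(x)+I$ is self-adjoint with spectrum contained in $[1,1+\lambda\norm{\nabla^2 f(x)}]$. This simultaneously gives invertibility and a two-sided operator-norm estimate: $\norm{(\lambda\nabla^2 f(x)+I)^{-1}}\leq 1$ and $\norm{\lambda\nabla^2 f(x)+I}\leq 1+\lambda\norm{\nabla^2 f(x)}$.

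For the upper bound, I would simply apply submultiplicativity of the operator norm to $y:=(\lambda\nabla^2 f(x)+I)^{-1}\lambda\nabla f(x)$, using $\norm{(\lambda\nabla^2 f(x)+I)^{-1}}\leq 1$, to obtain $\norm{y}\leq\lambda\norm{\nabla f(x)}$.

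For the lower bound, I would read the identity $(\lambda\nabla^2 f(x)+I)y=\lambda\nabla f(x)$ in the reverse direction: by submultiplicativity,
\[
\lambda\norm{\nabla f(x)}=\norm{(\lambda\nabla^2 f(x)+I)y}\leq (1+\lambda\norm{\nabla^2 f(x)})\norm{y},
\]
and rearranging yields the desired lower estimate. There is essentially no obstacle; the only point to state cleanly is why positive semidefiniteness of $\nabla^2 f(x)$ forces the spectrum of $\lambda\nabla^2 f(x)+I$ to lie in $[1,+\infty)$, hence the operator is bounded below by the identity and invertible even in infinite dimensions (this follows from the spectral theorem for bounded self-adjoint operators, or directly from the coercivity estimate $\inner{(\lambda\nabla^2 f(x)+I)h}{h}\geq\norm{h}^2$ combined with the Lax--Milgram theorem).
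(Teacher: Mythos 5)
Your proposal is correct and follows essentially the same route as the paper, which simply attributes the result to positive semidefiniteness of $\nabla^2 f(x)$, positivity of $\lambda$, and the definition of operator norm; you have merely spelled out the submultiplicativity steps and the coercivity/spectral argument for invertibility that the paper leaves implicit.
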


\begin{proof}
  Non-singularity of $\lambda\nabla^2f(x)+I$, as well as the inequalities
  in~\eqref{eq:lsis2}, are due to the facts that $\lambda>0$, $\nabla^2
  f(x)$ is psd (see the remark after the Assumption \textbf{AS3}), and the
  definition of operator's norm.
\end{proof}

The next result provides {\it a priori} bounds for the (relative)
residual in \eqref{eq:proxgrad} after a Newton iteration from $x$ for
this equation.

\begin{lemma}
  \label{lm:ns1}
  For any $x\in \HH$, if $\lambda>0$, $\sigma>0$, and
 \begin{align}
   \label{eq:ynew}
   y=x-(\lambda\nabla^2f(x)+I)^{-1}\lambda\nabla f(x),\qquad
    \lambda\norm{(\lambda\nabla^2f(x)+I)^{-1}\lambda \nabla f(x)}
    \leq \dfrac{2\sigma}{L},
  \end{align}
  then
 $\norm{\lambda \nabla f(y)+y-x}\leq \sigma\norm{y-x}$.
\end{lemma}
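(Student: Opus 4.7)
The plan is to write the Newton step defining $y$ in a form that makes it a zero of a particular residual, then compare the desired residual $\lambda\nabla f(y)+y-x$ to it using the Lipschitz-Hessian estimate~\eqref{eq:qerror}.

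First I would note that the definition of $y$ in~\eqref{eq:ynew} is exactly the (unique) solution of the linear system
\begin{equation*}
(\lambda\nabla^2 f(x)+I)(y-x) + \lambda\nabla f(x) = 0,
\end{equation*}
so that $\lambda\nabla f(x) + \lambda\nabla^2 f(x)(y-x) + (y-x) = 0$. Subtracting this identity from $\lambda\nabla f(y) + (y-x)$ gives
\begin{equation*}
\lambda\nabla f(y) + y - x = \lambda\bigl[\nabla f(y) - \nabla f(x) - \nabla^2 f(x)(y-x)\bigr].
\end{equation*}

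Next I would invoke the quadratic error bound~\eqref{eq:qerror}, which is a consequence of Assumption \textbf{AS2}, to obtain
\begin{equation*}
\norm{\lambda\nabla f(y) + y - x} \leq \lambda\cdot\frac{L}{2}\norm{y-x}^2 = \frac{L}{2}\,\bigl(\lambda\norm{y-x}\bigr)\,\norm{y-x}.
\end{equation*}
Finally, the hypothesis $\lambda\norm{(\lambda\nabla^2f(x)+I)^{-1}\lambda\nabla f(x)}\leq 2\sigma/L$ is precisely $\lambda\norm{y-x}\leq 2\sigma/L$ (by definition of $y$), so substituting this into the previous display yields $\norm{\lambda\nabla f(y)+y-x}\leq \sigma\norm{y-x}$, which is the desired inequality.

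There is essentially no obstacle: the calculation is a one-line manipulation of the Newton equation combined with the standard quadratic remainder estimate for $\nabla f$ under a Lipschitz Hessian. The only thing to be careful about is recognizing that the quantity controlled by the hypothesis is $\lambda\norm{y-x}$ (not $\norm{y-x}$), which is exactly what is needed to absorb the factor $\lambda L/2$ coming from~\eqref{eq:qerror}.
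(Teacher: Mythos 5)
Your proof is correct and follows essentially the same route as the paper: rewrite the Newton step so that $\lambda\nabla f(y)+y-x$ becomes $\lambda$ times the second-order Taylor remainder of $\nabla f$, bound it by $(\lambda L/2)\norm{y-x}^2$ via~\eqref{eq:qerror}, and then use the hypothesis $\lambda\norm{y-x}\leq 2\sigma/L$ to absorb the extra factor. No differences worth noting.
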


\begin{proof}
  It follows from
  \eqref{eq:ynew} that
  \begin{align*}
    \lambda\nabla f(y)+y-x&=\lambda\nabla f(y)
    -\lambda[\nabla f(x)+\nabla^2f(x)(y-x)],
    \qquad \lambda\norm{y-x}\leq\dfrac{2\sigma}{L}.
  \end{align*}
  Therefore
  \begin{align*}
    \norm{\lambda\nabla f(y)+y-x}&=\lambda\norm{\nabla f(y)-
      \nabla f(x)-\nabla^2f(x)(y-x)}\leq \dfrac{\lambda L}{2}\norm{y-x}^2
    \leq \sigma\norm{y-x},
  \end{align*}
  where the first inequality follows from~\eqref{eq:qerror}.
  %
\end{proof}

\begin{lemma}
  \label{lm:ns2}
  For any $x\in \HH$, and $0<\sigma_\ell<\sigma_u<+\infty$, if $\nabla f(x) \neq 0$ then
    the set of all scalars $\lambda\in]0,+\infty[$ satisfying
    \begin{align}
      \label{eq:55}
      \dfrac{2\sigma_\ell }{L}\leq \lambda
      \norm{(\lambda\nabla^2f(x)+I)^{-1}\lambda\nabla f(x)}
      \leq \frac{2\sigma_u}{L}
		\end{align}
		is a (nonempty) closed interval $[\lambda_\ell,\lambda_u]\subset ]0,+\infty[$,
                \begin{align}
                  \label{eq:bds}
&                  \sqrt{\dfrac{2\sigma_\ell/L}{\norm{\nabla
                        f(x)}}}\leq\lambda_\ell,\;\;
\lambda_u \leq \dfrac{\dfrac{\norm{\nabla^2f(x)}\sigma_u}L
  +\sqrt{
                      \left(\dfrac{\norm{\nabla^2f(x)}\sigma_u}L\right)^2+
                    \dfrac{\norm{\nabla f(x)}2\sigma_u}L}}{\norm{\nabla f(x)}}
                 \end{align}
 and 
 $\lambda_u/\lambda_\ell \geq \sqrt{\sigma_u/\sigma_\ell}$.
\end{lemma}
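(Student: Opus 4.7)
The plan is to reduce the analysis of the scalar function
\[
g(\lambda) := \lambda\,\norm{(\lambda \nabla^2 f(x)+I)^{-1}\lambda \nabla f(x)}
\]
to the machinery already developed in Section 1 for $\varphi$. First I would introduce the affine maximal monotone operator $\tilde A:\HH\to\HH$ defined by $\tilde A(z)=\nabla^2 f(x)\,z+\nabla f(x)$, which is monotone because $\nabla^2f(x)$ is self-adjoint and positive semidefinite. A short computation gives $(I+\lambda\tilde A)^{-1}(0)=-(I+\lambda\nabla^2 f(x))^{-1}\lambda\nabla f(x)$, so that $g(\lambda)=\varphi(\lambda,0)$ in the sense of \eqref{eq:psi} for this particular operator $\tilde A$. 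Under the assumption $\nabla f(x)\neq 0$ we have $0\notin\tilde A(0)$, hence $0\notin \tilde A^{-1}(0)$, so Proposition~\ref{pr:phi-property} applies and tells us that $g$ is continuous on $[0,+\infty[$, strictly increasing, with $g(0)=0$ and $g(\lambda)\to +\infty$ as $\lambda\to+\infty$.

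From the intermediate value theorem the set of $\lambda>0$ satisfying \eqref{eq:55} is then a closed interval $[\lambda_\ell,\lambda_u]\subset\,]0,+\infty[$, where $\lambda_\ell$, $\lambda_u$ are the unique positive solutions of $g(\lambda_\ell)=2\sigma_\ell/L$ and $g(\lambda_u)=2\sigma_u/L$. Nonemptiness and the strict positivity of $\lambda_\ell$ come for free from this framework.

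Next I would derive the explicit quantitative bounds by using Lemma~\ref{lm:ns0}. The upper estimate there gives $g(\lambda_\ell)\leq \lambda_\ell^{\,2}\norm{\nabla f(x)}$, so $2\sigma_\ell/L\leq \lambda_\ell^{\,2}\norm{\nabla f(x)}$, which rearranges into the claimed lower bound $\lambda_\ell\geq\sqrt{(2\sigma_\ell/L)/\norm{\nabla f(x)}}$. The lower estimate in Lemma~\ref{lm:ns0} gives
\[
\frac{\lambda_u^{\,2}\norm{\nabla f(x)}}{\lambda_u\norm{\nabla^2 f(x)}+1}\leq g(\lambda_u)=\frac{2\sigma_u}{L},
\]
which is the quadratic inequality $\norm{\nabla f(x)}\lambda_u^{\,2}-(2\sigma_u/L)\norm{\nabla^2 f(x)}\lambda_u -2\sigma_u/L\leq 0$; solving for its positive root yields (after multiplying numerator and denominator of the quadratic formula by $1/2$) exactly the upper bound announced in \eqref{eq:bds}.

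Finally, the ratio $\lambda_u/\lambda_\ell\geq\sqrt{\sigma_u/\sigma_\ell}$ follows at once from the right-hand inequality of Lemma~\ref{lm:psi1} applied to $\tilde A$ at the point $0$, with $\lambda_1=\lambda_\ell\leq\lambda_2=\lambda_u$: it gives $g(\lambda_u)\leq(\lambda_u/\lambda_\ell)^2 g(\lambda_\ell)$, i.e.\ $2\sigma_u/L\leq (\lambda_u/\lambda_\ell)^2\,2\sigma_\ell/L$. There is no serious obstacle in this argument; the only thing that requires a moment's thought is the initial identification $g(\cdot)=\varphi(\cdot,0)$ for the linearized monotone operator $\tilde A$, after which all three conclusions are immediate consequences of Lemma~\ref{lm:ns0}, Proposition~\ref{pr:phi-property} and Lemma~\ref{lm:psi1}.
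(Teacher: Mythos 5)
Your proposal is correct and follows essentially the same route as the paper: the paper sets $A(y)=\nabla f(x)+\nabla^2 f(x)(y-x)$ and identifies $g(\lambda)$ with $\varphi(\lambda,x)$, which is just your $\tilde A$ and $\varphi(\lambda,0)$ after the translation $z=y-x$, and then, exactly as you do, invokes Proposition~\ref{pr:phi-property} for the interval structure, Lemma~\ref{lm:ns0} for the explicit bounds in \eqref{eq:bds}, and the second inequality of Lemma~\ref{lm:psi1} for the ratio $\lambda_u/\lambda_\ell\geq\sqrt{\sigma_u/\sigma_\ell}$.
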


\begin{proof}
  Assume that $\nabla f(x)$ is nonzero.  Define the operator $A:\HH\to
  \HH$ by $A(y)=\nabla f(x)+\nabla^2f(x)(y-x)$. Since $\nabla^2 f(x)$ is
  psd, it follows that the affine linear operator $A$
  is maximal monotone. It can be easily checked that, in this setting,
  \begin{align}
    \label{eq:phi.ns2}
    J_{\lambda}^A(x)=x-(\lambda \nabla^2f(x)+I)^{-1}\lambda\nabla f(x),
    \quad
    \varphi(\lambda,x)=\lambda
    \norm{(\lambda\nabla^2f(x)+I)^{-1}\lambda\nabla f(x)},    
  \end{align}
 (see~\eqref{eq:psi} and the paragraph below~\eqref{eq:psi}
  to recall the notation).
Hence, using Proposition~\ref{pr:phi-property} we conclude
that there exists $0<\lambda_\ell<\lambda_u<\infty$
such that
\begin{align}
  \label{eq:phi.u,v}
\varphi(\lambda_\ell,x)=\dfrac{2\sigma_\ell}{L},\quad
\varphi(\lambda_u,x)=
\dfrac{2\sigma_u}{L},
\end{align}
%
%
and the set of all scalars satisfying~\eqref{eq:55} is the closed
interval $[\lambda_\ell,\lambda_u]\subset]0,+\infty[$.  It follows from the
second inequality in~\eqref{eq:psi1} and the above (implicit)
definitions of $\lambda_\ell$ and $\lambda_u$ that
\[
\dfrac{2\sigma_u}{L}=\varphi(\lambda_u,x)\leq
\left(\dfrac{\lambda_u}{\lambda_\ell}\right)^2\varphi(\lambda_\ell,x)=
\left(\dfrac{\lambda_u}{\lambda_\ell}\right)^2\dfrac{2\sigma_\ell}{L}
\]
which trivially implies
that $\lambda_u/\lambda_\ell\geq\sqrt{\sigma_u/\sigma_\ell}$.
To prove the two inequalities in~\eqref{eq:bds}, first observe that, in view of
the expression \eqref{eq:phi.ns2} for $\varphi(\lambda,x)$, and Lemma~\ref{lm:ns0},
 we have
\begin{align*}
  \dfrac{\lambda^2\norm{\nabla f(x)}}{\lambda\norm{\nabla^2f(x)}+1}\leq
  \varphi(\lambda,x)\leq\lambda^2\norm{\nabla f(x)}.
\end{align*}
Then, evaluate these inequalities for $\lambda=\lambda_\ell$, $\lambda=\lambda_u$,
and use the above implicit expression \eqref{eq:phi.u,v}  for $\lambda_\ell$ and $\lambda_u$.
\end{proof}

Motivated by the above results, we propose the following algorithm
for solving~\eqref{eq:p3}. This algorithm is the main object of study in this section.
We will prove that, for a given tolerance $\varepsilon>0$, it is able to find approximate solutions of~\eqref{eq:p3} in at most
${\cal O}(1/\sqrt{\varepsilon})$ iterations, i.e., it has the same complexity as the cubic regularization of the Newton method
proposed and studied in~\cite{NP}.

\bigskip
\noindent
\fbox{
\begin{minipage}[h]{5.9 in}
{\bf Algorithm 2:} A proximal-Newton method for convex optimization
\begin{itemize}
\item[(0)] Let $x_0\in \HH$, $0<\sigma_\ell<\sigma_u<1$ be given,
 and set $k=1$;
\item[(1)] if $\nabla f(x_{k-1})=0$ then {\bf stop}. Otherwise,
 compute $\lambda_k>0$ such that
	\begin{align}
	\label{eq:ls.n}
    \dfrac{2\sigma_\ell }{L}\leq
    \lambda_k\norm{(I+\lambda_k\nabla^2 f(x_{k-1}))^{-1}\lambda_k\nabla f(x_{k-1})}
    \leq \dfrac{2\sigma_u}{L};
  \end{align}
\item[(2)] set $\displaystyle
  x_k=x_{k-1}-(I+\lambda_k\nabla^2f(x_{k-1}))^{-1}\lambda_k\nabla
  f(x_{k-1})$; 
\item[(3)] set $k\leftarrow k+1$ and go to step 1.
\end{itemize}
\noindent
{\bf end}
\end{minipage}
}
\bigskip

\noindent
{\bf Remark.}
We note that, for a given $\lambda_k>0$, iterate
$x_k$, defined in step (2) of Algorithm 2, is the solution of the
quadratic problem
\[
\min_{x\in \HH}\, f(x_{k-1})+\inner{\nabla
  f(x_{k-1})}{x-x_{k-1}}+\dfrac{1}{2}\inner{x-x_{k-1}}{\nabla^2
  f(x_{k-1})(x-x_{k-1})}+\dfrac{1}{2\lambda_k}\norm{x-x_{k-1}}^2.
\]
Hence, our method is based on classical quadratic regularizations of
quadratic local models for $f$, combined with
a large-step type condition.

\medskip

At iteration $k$, we must find $\lambda_k\in[\lambda_\ell,\lambda_u]$, where
\begin{align*}
  \lambda_\ell=\Lambda_{2\sigma_\ell/L}(x_{k-1}),\quad
  \lambda_u=\Lambda_{2\sigma_u/L}(x_{k-1}).
\end{align*}
Lemma~\ref{lm:ns2} provides a lower and an upper bound for
$\lambda_\ell$ and $\lambda_u$ respectively, and guarantees that the length
of the interval $[\log\lambda_\ell,\log\lambda_u]$ is no smaller than 
$\log(\sigma_u/\sigma_\ell)/2$. A binary search in $\log\lambda$ may be
used for finding $\lambda_k$. The complexity of such a procedure
was analysed in~\cite{MS1,MS2}, in the context of the HPE method.
The possible improvement of this procedure is a subject of future research.

\begin{proposition}
  \label{pr:alg.1}
  For $x_0\in \HH$ and $0<\sigma_\ell<\sigma_u<1$, consider the
  sequences $\{\lambda_k\}$ and $\{x_k\}$
  generated by Algorithm 2 and define
	\begin{align}
	\label{eq:61}
	 \sigma=\sigma_u,\quad \theta=2\sigma_\ell/L,\quad v_k=\nabla f(x_k), \quad \varepsilon_k=0\quad \forall k\geq 1.
	\end{align}
	Then, the following statements hold for every $k\geq 1$:
	\begin{itemize}
	\item[(a)] $v_k\in \partial_{\varepsilon_k} f(x_k)$,\quad $\norm{\lambda_k v_k+x_k-x_{k-1}}\leq \sigma \norm{x_k-x_{k-1}}$;
	\item[(b)] $\lambda_k\norm{x_k-x_{k-1}}\geq \theta$;
	\item[(c)] $\lambda_k\geq \sqrt{\sigma_\ell/(1+\sigma_u)\sigma_u}\lambda_{k-1}$;
	\item[(d)] $v_k$ is nonzero whenever $v_0$ is nonzero.
	\end{itemize}
	As a consequence,  Algorithm 2 is a special instance of  Algorithm 1,
	with $\sigma$, $\theta$ and the sequences $\{v_k\}$ and $\{\varepsilon_k\}$
	given by~\eqref{eq:61}.
\end{proposition}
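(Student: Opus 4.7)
The plan is to verify the four numbered items in order, and then observe that together they identify Algorithm~2 as a realization of Algorithm~1 via the prescribed choice of $\sigma$, $\theta$, $v_k$, $\varepsilon_k$.

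For item (a), I would first note that since $f$ is convex and differentiable, $v_k = \nabla f(x_k) \in \partial f(x_k) = \partial_0 f(x_k)$, so the inclusion is immediate from $\varepsilon_k=0$. For the residual bound, I would observe that step (2) of Algorithm~2 is exactly the Newton update of Lemma~\ref{lm:ns1} applied at $x_{k-1}$, and the upper bound in the line-search condition~\eqref{eq:ls.n} provides precisely the hypothesis $\lambda_k\norm{x_k-x_{k-1}}\leq 2\sigma_u/L = 2\sigma/L$ of that lemma. The conclusion $\norm{\lambda_k\nabla f(x_k) + x_k-x_{k-1}}\leq\sigma\norm{x_k-x_{k-1}}$ follows directly. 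Item (b) is then even simpler: since $\norm{x_k-x_{k-1}} = \norm{(I+\lambda_k\nabla^2f(x_{k-1}))^{-1}\lambda_k\nabla f(x_{k-1})}$ by step (2), the lower bound in~\eqref{eq:ls.n} reads exactly $\lambda_k\norm{x_k-x_{k-1}}\geq 2\sigma_\ell/L = \theta$.

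The main work is item (c). My approach is to combine an upper bound for $\lambda_{k-1}^2\norm{v_{k-1}}$ with a lower bound for $\lambda_k^2\norm{v_{k-1}}$, exploiting that the same residual $\norm{v_{k-1}}=\norm{\nabla f(x_{k-1})}$ appears in both. On one hand, applying Lemma~\ref{lm:ns0} to the lower bound of~\eqref{eq:ls.n} at iteration $k$ gives $\lambda_k^2\norm{v_{k-1}}\geq 2\sigma_\ell/L$. On the other hand, from item (a) at iteration $k-1$ together with Proposition~\ref{pr:ie}(c) (which applies since~\eqref{eq:is} holds with $\sigma=\sigma_u$ and $\varepsilon=0$), I get $\lambda_{k-1}\norm{v_{k-1}}\leq (1+\sigma_u)\norm{x_{k-1}-x_{k-2}}$; multiplying by $\lambda_{k-1}$ and using the upper bound of~\eqref{eq:ls.n} at iteration $k-1$ yields $\lambda_{k-1}^2\norm{v_{k-1}}\leq 2(1+\sigma_u)\sigma_u/L$. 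Dividing the two estimates gives $\lambda_k^2/\lambda_{k-1}^2\geq \sigma_\ell/((1+\sigma_u)\sigma_u)$, which is the claim.

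Item (d) I would prove by induction on $k$: if $v_{k-1}\neq 0$ then step (1) does not stop and produces $\lambda_k>0$ and $x_k$ with $\norm{x_k-x_{k-1}}>0$ (by (b) and $\theta>0$), and if we had $v_k=0$ then the residual bound in (a) would collapse to $\norm{x_k-x_{k-1}}\leq\sigma_u\norm{x_k-x_{k-1}}$, contradicting $\sigma_u<1$ and $\norm{x_k-x_{k-1}}>0$. Finally, to conclude that Algorithm~2 is a special instance of Algorithm~1, I would simply match the requirements~\eqref{eq:hy.inc}--\eqref{eq:bigstep}: \eqref{eq:hy.inc} and \eqref{eq:hy.res} are precisely item (a) (squaring the norm inequality and using $\varepsilon_k=0$), while \eqref{eq:bigstep} is item (b), with (d) ensuring the alternative $v_k=0$ never needs to be invoked so long as $v_0\neq 0$. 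I expect the technical heart of the argument to be the bookkeeping in step (c), where the two consecutive iterations must be linked through the common factor $\norm{v_{k-1}}$; everything else is a direct reading of the earlier lemmas.
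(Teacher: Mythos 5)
Your proof is correct and follows essentially the same path as the paper's: items (a) and (b) are read off Lemma~\ref{lm:ns1} and the large-step condition~\eqref{eq:ls.n}, item (c) combines the bound $\lambda_k^2\norm{\nabla f(x_{k-1})}\geq 2\sigma_\ell/L$ (from Lemma~\ref{lm:ns0} and the lower bound of~\eqref{eq:ls.n} at step $k$) with $\lambda_{k-1}^2\norm{\nabla f(x_{k-1})}\leq 2(1+\sigma_u)\sigma_u/L$ (from item~(a) at step $k-1$, Proposition~\ref{pr:ie}(c), and the upper bound of~\eqref{eq:ls.n} at step $k-1$), and item (d) is the same induction via $\sigma_u<1$. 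The paper packages the two estimates for (c) into a single displayed chain multiplied by $\lambda_{k-1}^2\lambda_k$, whereas you divide the two bounds directly, but the ingredients and the cancellation of the common factor $\norm{\nabla f(x_{k-1})}$ are identical.
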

\begin{proof}
\textit{(a)} First note that the inclusion in \textit{(a)} follows trivially from the definition of $v_k$
and $\varepsilon_k$ in~\eqref{eq:61}. Moreover, using the definitions of $\sigma$ and $v_k$ in~\eqref{eq:61}, the second inequality in~\eqref{eq:ls.n},
the definition of $x_k$ in step 2 of Algorithm 2,
and Lemma~\ref{lm:ns1} with $\lambda=\lambda_k$, $y=x_k$ and $x=x_{k-1}$
we obtain
\begin{align*}
\norm{\lambda_k v_k+x_k-x_{k-1}}&=\norm{\lambda_k \nabla f(x_k)+x_k-x_{k-1}}
\leq \sigma\norm{x_k-x_{k-1}},
\end{align*}
which concludes the proof of \textit{(a)}.

\textit{(b)} The statement in \textit{(b)} follows easily from the definition of $x_k$ and $\theta$ in step 2
of Algorithm 2 and~\eqref{eq:61}, respectively, and the first inequality in~\eqref{eq:ls.n}.

\textit{(c)} Using Algorithm 2's definition, 
item~\textit{(a)}, and Lemma~\ref{lm:ns0} with $\lambda=\lambda_k$,
$x=x_{k-1}$ we have, for all $ k\geq 1$
\begin{align}
\label{eq:71}
\lambda_k \norm{ \nabla f(x_k)}\leq (1+\sigma_u)\norm{(\lambda_k\nabla^2f(x_{k-1})+I)^{-1}\lambda_k\nabla f(x_{k-1})}\leq
(1+\sigma_u)\lambda_k\norm{\nabla f(x_{k-1})}.
\end{align}
Set $s_k = -(\lambda_k\nabla^2f(x_{k-1})+I)^{-1}\lambda_k\nabla f(x_{k-1}) $.
Note now that~\eqref{eq:ls.n} and the definition of $s_k$
imply that $2\sigma_\ell/L\leq \norm{\lambda_js_j}\leq 2\sigma_u/L$
for all $j=1,\cdots, k$. Direct use of the latter inequalities for $j=k-1$ and $j=k$, and the multiplication of the second inequality in the latter displayed equation by $\lambda_{k-1}^2\lambda_k$ yield
\begin{align*}
 \lambda_{k-1}^2(2\sigma_\ell)/L\leq \lambda_{k-1}^2\lambda_k^2\norm{\nabla f(x_{k-1})}&=
 \lambda_k^2\lambda_{k-1}\norm{\lambda_{k-1}\nabla f(x_{k-1})}\\
 &\leq (1+\sigma_u)\lambda_k^2\norm{\lambda_{k-1}s_{k-1}}\\
 &\leq (1+\sigma_u)\lambda_k^2(2\sigma_u)/L,
\end{align*}
and, hence, the inequality in \textit{(c)}.

\textit{(d)} To prove this statement observe that if $\nabla f(x_{k-1}) \neq 0$ then
$x_k\neq x_{k-1}$, and use item \textit{(a)}, the second inequality in item \textit{(c)} of
Proposition~\ref{pr:ie}, and induction in $k$.
\end{proof}

Now we make an additional assumption in order to derive
complexity estimates for the sequence generated
by Algorithm 2.
\begin{description}
\item[AS4)] The level set $\{x\in \HH\:|\; f(x)\leq f(x_0)\}$ is
  bounded, and $\mathcal{D}_0$ is its diameter, that is,
  \begin{align*}
    \mathcal{D}_0=\sup\{\norm{y-x}\;|\; \max\{f(x),f(y)\}\leq f(x_0)\}<\infty.
  \end{align*}
\end{description}

\begin{theorem}
  \label{th:main}
	Assume that assumptions AS1, AS2, AS3, AS4 hold, and consider the sequence $\{x_k\}$
	generated by Algorithm 2. Let $\bar x$ be a solution of~\eqref{eq:p3}
	and, for any given tolerance $\varepsilon>0$ define
	\begin{align*}
	 \kappa_0=\sqrt{\dfrac{2\sigma_\ell(1-\sigma_u)}{LD_0^3}},
	\quad K=\dfrac{2+3\kappa_0\sqrt{f(x_0)-f(\bar x)}}{\kappa_0\sqrt{\varepsilon}},
	 \ \ J=\dfrac{2L^{1/6}\left(2+3\kappa_0\sqrt{f(x_0)-f(\bar x)}\right)^{2/3}}
    {\left[2\sigma_\ell(1-\sigma_u)\right]^{1/6}\kappa_0^{1/3}\sqrt{\varepsilon}}
	\end{align*}
	Then, the following statements hold for every $k\geq 1$:
  \begin{enumerate}
	\item[(a)] for any $k\geq K$, $f(x_k)- f(\bar x)\leq\varepsilon$;
  \item[(b)] there exists $j\leq 2\left\lceil J\right\rceil$ such that $\norm{\nabla f(x_j)}\leq\varepsilon$.
  \end{enumerate}
\end{theorem}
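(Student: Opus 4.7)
My plan is to reduce Theorem \ref{th:main} to a direct consequence of Corollary \ref{cr:cp.g} via the bridge already established in Proposition \ref{pr:alg.1}. The key observation is that Proposition \ref{pr:alg.1} shows Algorithm 2 is a special instance of Algorithm 1 with the identifications
\[
\sigma = \sigma_u, \qquad \theta = 2\sigma_\ell/L, \qquad v_k = \nabla f(x_k), \qquad \varepsilon_k = 0.
\]
In particular, since $\varepsilon_k = 0$ for all $k$, we are in the setting of Theorem \ref{th:e0} and hence of Corollary \ref{cr:cp.g}.

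First, I would dispose of the trivial case. If Algorithm 2 stops at iteration $k$ because $\nabla f(x_{k-1}) = 0$, then $x_{k-1}$ is already a minimizer of $f$ by convexity, and both conclusions hold vacuously. Otherwise, by Proposition \ref{pr:alg.1}(d), the sequence $\{v_k\} = \{\nabla f(x_k)\}$ never vanishes, and the algorithm produces an infinite sequence satisfying all the defining conditions of Algorithm 1.

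Next, I would note that Assumption \textbf{AS4} supplies $\mathcal{D}_0 < \infty$, which is the hypothesis of Corollary \ref{cr:cp.g}. Substituting $\sigma = \sigma_u$ and $\theta = 2\sigma_\ell/L$ into the constant
\[
\kappa_0 = \sqrt{\frac{\theta(1-\sigma)}{\mathcal{D}_0^3}}
\]
of Corollary \ref{cr:cp.g} yields precisely $\kappa_0 = \sqrt{2\sigma_\ell(1-\sigma_u)/(L\mathcal{D}_0^3)}$, matching the constant in the statement of Theorem \ref{th:main}. The constant $K$ in the corollary is identical to the one defined here, while for $J$ a short computation gives
\[
(\theta(1-\sigma))^{1/6} = \bigl(2\sigma_\ell(1-\sigma_u)/L\bigr)^{1/6} = \frac{[2\sigma_\ell(1-\sigma_u)]^{1/6}}{L^{1/6}},
\]
so that the $J$ of Corollary \ref{cr:cp.g} agrees with the $J$ in the theorem.

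With these identifications in place, part (a) is exactly Corollary \ref{cr:cp.g}(a), and part (b) is Corollary \ref{cr:cp.g}(b) together with the identity $v_j = \nabla f(x_j)$. There is no genuine obstacle here: the entire analytical work (the relative-error inequality in Proposition \ref{pr:ie}, the descent recursion of Theorem \ref{th:gen}, and the $\mathcal{O}(1/k^2)$ rate of Theorem \ref{th:e0}) has already been done in the abstract framework of Algorithm 1, and the only thing to verify is that the Newton step of Algorithm 2 satisfies the relative-error and large-step conditions required to qualify as an instance of Algorithm 1, which is precisely the content of Proposition \ref{pr:alg.1}.
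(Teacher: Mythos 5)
Your proof is correct and follows exactly the same route as the paper: the paper's own proof is a one-liner invoking the last statement of Proposition~\ref{pr:alg.1} together with Corollary~\ref{cr:cp.g}, and you have simply unwound that reduction, verifying that the substitutions $\sigma=\sigma_u$, $\theta=2\sigma_\ell/L$, $\varepsilon_k=0$ reproduce the constants $\kappa_0$, $K$, $J$ of the theorem. Nothing is missing.
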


\begin{proof}
  The proof follows from the last statement of
  Proposition~\ref{pr:alg.1} and Corollary~\ref{cr:cp.g}.
\end{proof}
In practical implementations of Algorithm 2, as in other  Newton methods,
the main iteration is divided into two steps: the computation of a Newton step $s_k$,
\[
s_k=-(\lambda_k\nabla^2f(x_{k-1})+I)^{-1}\lambda_k\nabla f(x_{k-1}),
\]
and the update $x_k=x_{k-1}+s_k$. As in other Newton methods, step $s_k$
is not to be computed using the inverse of
$\lambda_k\nabla^2f(x_{k-1})+I$. Instead, the linear system
$$
(\nabla^2f(x_k)+\mu_kI)s_k = -\nabla f(x_{k-1}), \qquad \mu_k=1/\lambda_k
$$
is solved via a Hessenberg factorization (followed by a Choleski factorization), a Cholesky factorization, or a conjugate gradient method.
Some reasons for choosing a Hessenberg factorization are discussed in~\cite{MS1}.
For large and dense linear systems, conjugate gradient is the method of choice, and
it is used as an iterative procedure.
In these cases, the linear system is not solved (exactly). Even for
Hessenberg and Cholesky factorization, ill-conditioned linear systems are
inexactly solved with a non-negligible error.

Since $\lambda_k\to\infty$, $\mu_k\to 0$ and, in spite of the
regularizing term $\mu_kI$, ill-conditioned systems may occur. For these
reasons, it may be interesting to consider a variant of Algorithm 2
where an ``inexact'' Newton step is used, see ~\cite{MS2} for the
development of this method in the context of the HPE method.
%

\subsection{Quadratic convergence in the regular case}
\label{sec:qc}

In this section, we will analyze Algorithm 2 under the assumption:

\smallskip

\noindent
\textbf{AS3r)} there exists a unique $x^*$ solution of \eqref{eq:p3}, and
$\nabla^2 f(x^*)$ is non-singular.

\begin{theorem}\label{quad-conv-thm}
 Let us make assumptions AS1, AS2, and AS3r. Then, the sequence $\{x_k\}$	generated by Algorithm 2 converges
quadratically to $x^*$, the unique solution of \eqref{eq:p3}.
\end{theorem}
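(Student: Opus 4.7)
First I would set up the local framework and argue convergence. The plan is to leverage local strong convexity at $x^*$ (from AS3r), establish that $x_k\to x^*$, and then combine the Newton error analysis with the large-step condition to derive a quadratic recurrence $e_{k+1}\leq Ce_k^2$. Since $f$ is convex and $\nabla^2 f(x^*)$ is non-singular, it is positive definite; by continuity of $\nabla^2 f$ (AS2), there exist $r_0>0$ and $0<\mu\leq M<\infty$ with $\mu I\preceq \nabla^2 f(x)\preceq MI$ on $B(x^*,r_0)$. This gives $\|\nabla f(x)\|\leq M\|x-x^*\|$ and the error bound $\|x-x^*\|\leq (2/\mu)\|\nabla f(x)\|$ on that ball. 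By Proposition~\ref{pr:alg.1}, Algorithm~2 is an instance of Algorithm~1 with $v_k=\nabla f(x_k)$ and $\varepsilon_k=0$; Proposition~\ref{pr:bas}(b) then gives $\sum_k\lambda_k\|\nabla f(x_k)\|^2 \leq 2(f(x_0)-f(x^*))<\infty$, while Proposition~\ref{pr:bas}(e) gives $\lambda_k\to\infty$. Together these force $\|\nabla f(x_k)\|\to 0$, and the local error bound then yields $x_k\to x^*$; in particular $x_{k-1}\in B(x^*,r_0)$ for $k$ large enough. This global-to-local step is the main obstacle, and would likely rely either on a standing AS4 hypothesis or on a direct argument exploiting $f(x_k)\downarrow f(x^*)$ together with local strong convexity.

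Next I would carry out the local Newton analysis. Set $e_k:=\|x_{k-1}-x^*\|$. Using~\eqref{eq:qerror} at $x=x_{k-1}$, $y=x^*$, and $\nabla f(x^*)=0$ yields $\|\nabla f(x_{k-1})-\nabla^2 f(x_{k-1})(x_{k-1}-x^*)\|\leq (L/2)e_k^2$. Rearranging the update rule $(\lambda_k\nabla^2 f(x_{k-1})+I)(x_k-x_{k-1})=-\lambda_k\nabla f(x_{k-1})$ gives
\[
(\lambda_k\nabla^2 f(x_{k-1})+I)(x_k-x^*)=(x_{k-1}-x^*)+\lambda_k\bigl[\nabla^2 f(x_{k-1})(x_{k-1}-x^*)-\nabla f(x_{k-1})\bigr].
\]
Since $\lambda_k\nabla^2 f(x_{k-1})+I\succeq (\lambda_k\mu+1)I$, taking norms produces
\[
e_{k+1}\leq \frac{e_k}{\lambda_k\mu+1}+\frac{\lambda_k L\, e_k^2}{2(\lambda_k\mu+1)}\leq \frac{e_k}{\lambda_k\mu+1}+\frac{L}{2\mu}e_k^2.
\]
The second term is already quadratic in $e_k$; the remaining task is to absorb the first one.

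The large-step condition delivers the required lower bound on $\lambda_k$. The inequality $\|(\lambda_k\nabla^2 f(x_{k-1})+I)^{-1}\|\leq 1/(\lambda_k\mu+1)$ combined with the update rule gives $\|x_k-x_{k-1}\|\leq \lambda_k\|\nabla f(x_{k-1})\|/(\lambda_k\mu+1)$; multiplying by $\lambda_k$ and comparing with $\lambda_k\|x_k-x_{k-1}\|\geq 2\sigma_\ell/L$ from Proposition~\ref{pr:alg.1}(b) yields $\lambda_k^2\|\nabla f(x_{k-1})\|\geq (2\sigma_\ell/L)(\lambda_k\mu+1)\geq (2\sigma_\ell\mu/L)\lambda_k$, hence
\[
\lambda_k\geq \frac{2\sigma_\ell\mu}{L\|\nabla f(x_{k-1})\|}\geq \frac{2\sigma_\ell\mu}{LM\,e_k}.
\]
Substituting this into the previous display gives $e_{k+1}\leq C\, e_k^2$ with $C=LM/(2\sigma_\ell\mu^2)+L/(2\mu)$. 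Once $e_k<1/C$, which is guaranteed by $e_k\to 0$, the recurrence is self-sustaining and produces the quadratic convergence $x_k\to x^*$. Apart from the global convergence step flagged in the first paragraph, everything here is a mechanical consequence of the tight interplay between the regularized Newton update and the large-step constraint.
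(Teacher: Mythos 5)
Your proof is correct and essentially mirrors the paper's argument: both exploit AS3r to obtain local positive-definiteness of $\nabla^2 f$ in a ball $B(x^*,r_0)$, invoke the linearization bound \eqref{eq:qerror} coming from AS2, and use the large-step condition from Proposition~\ref{pr:alg.1}(b) to bound $\lambda_k^{-1}$ in terms of the step size, which is what renders the regularization contribution quadratic in the error. The only structural difference is bookkeeping: the paper introduces the exact Newton step $s_k^{\mathrm{N}}=-\nabla^2f(x_{k-1})^{-1}\nabla f(x_{k-1})$ as a pivot and bounds $\|s_k^*-s_k^{\mathrm{N}}\|$ (classical Newton quadratic error) and $\|s_k^{\mathrm{N}}-s_k\|$ (the damping error) separately, then adds, whereas you manipulate $(\lambda_k\nabla^2f(x_{k-1})+I)(x_k-x^*)$ directly and split it into a $1/(\lambda_k\mu+1)$-damped linear term and a quadratic linearization term. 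The resulting recursions are the same up to constants.

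Concerning the step you flag as the main obstacle, establishing $x_k\to x^*$ without invoking AS4: the paper also glosses over it, simply asserting that $f(x_k)\to f(x^*)$ and hence $x_k\to x^*$ under AS1 and AS3r. Your route via Proposition~\ref{pr:bas}(b),(e), which gives $\sum_k\lambda_k\|\nabla f(x_k)\|^2<\infty$ and $\lambda_k\to\infty$, hence $\|\nabla f(x_k)\|\to 0$, is the right starting point and in fact closes the gap without AS4: convexity of $f$ together with the local strong convexity supplied by AS3r forces the global growth estimate $f(x)-f(x^*)\geq\min\{(\mu/2)\|x-x^*\|^2,\,(\mu r_0/2)\|x-x^*\|\}$, so the monotone sequence $\{f(x_k)\}$ forces $\{x_k\}$ to be bounded, and then the convex inequality $f(x_k)-f(x^*)\leq\|\nabla f(x_k)\|\,\|x_k-x^*\|\to 0$ combined with that growth estimate drives $x_k\to x^*$ and guarantees $x_{k-1}\in B(x^*,r_0)$ for $k$ large.
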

\begin{proof}
  Let $M:=\norm{\nabla^2f(x^*)^{-1}}$. For any $M'>M$ there exists $r_0>0$
  such that
  \begin{align*}
    x\in B(x^*,r_0)\Longrightarrow \nabla^2f(x)\text{ is non-singular, }
    \norm{\nabla^2f(x)^{-1}}\leq M'.
  \end{align*}
  Since $\{f(x_k)\}$ converges to $f(x^*)$, it follows from assumptions
  AS1 and AS3r that $x_k\to x^*$ as $k\to\infty$; therefore, there
  exists $k_0$ such that
  \begin{align*}
    \norm{x^*-x_k} <r_0 \  \text{ for }k\geq k_0.
  \end{align*}
  Define, for $k> k_0$, $s_k$, $s_k^{\mathrm{N}}$, and $s^*_k$ as
   \begin{align*}
     s_k=-(I+\lambda_k\nabla^2f(x_{k-1}))^{-1}\lambda_k\nabla f(x_{k-1}),\;\;
     s_k^{\mathrm{N}}=-\nabla^2f(x_{k-1})^{-1}\nabla f(x_{k-1}),\;\;
     s^*_k=x^*-x_{k-1}.
   \end{align*}
   Observe that $s_k$ is the step of Algorithm 2 at $x_{k-1}$, and
   $s_k^{\mathrm{N}}$ is Newton's step for \eqref{eq:p3} at $x_{k-1}$.
   Define also
   \begin{align*}
     w_k=\nabla^2 f(x_{k-1})(s^*_{k})+\nabla f(x_{k-1})
     =\nabla^2 f(x_{k-1})(x^*-x_{k-1})+\nabla f(x_{k-1}).
   \end{align*}
  Since $\nabla f(x^*)=0$, it follows from assumption AS2 that
  $\norm{w_k}\leq L\norm{s_k^*}^2/2$. Hence
  \begin{align}\label{quadra-conv1}
    \norm{s^*_k-s_k^\mathrm{N}}=\Norm{\nabla^2 f(x_{k-1})^{-1}w_k}\leq
    \dfrac{M'L}{2}\norm{s^*_k}^2.
  \end{align}
Let us now observe that
 $$\norm{s_k}\leq\norm{s_k^\mathrm{N}}.$$
This is a direct consequence of the definition of $s_k$, $s_k^{\mathrm{N}}$, and the monotonicity property  of $\nabla^2 f(x_{k-1})$.
By the two above relations, and the triangle inequality we deduce that 
   \begin{align*}
     \norm{s_k} \leq \norm{s^*_k}+\norm{s_k^{\mathrm{N}}-s^*_k}
     \leq \norm{s^*_k}\left(1+\dfrac{M'L}{2}\norm{s^*_k}\right) .
   \end{align*}
   The first inequality in \eqref{eq:ls.n} is, in the above notation,
   $2\sigma_\ell/L \leq \lambda_k\norm{s_k}$. Therefore,
  \begin{align}\label{basic-lambda}
    \lambda_k^{-1}\leq \dfrac{L}{2\sigma_\ell}\norm{s_k} .
  \end{align}
  It follows from the above definitions that 
  \begin{align*}
    \nabla^2f(x_{k-1})s_k+\lambda_k^{-1}s_k+\nabla f(x_{k-1})=0,\qquad
    \nabla^2f(x_{k-1})s^{\mathrm{N}}_k+\nabla f(x_{k-1})=0 .
  \end{align*}
  Hence $\nabla^2f(x_{k-1})(s^{\mathrm{N}}_k-s_k)=\lambda_k^{-1}s_k$, which gives, by (\ref{basic-lambda})
  \begin{align}\label{quadra-conv2}
    \norm{s_k^{\mathrm{N}}-s_k}\leq M'\lambda_k^{-1}\norm{s_k}
    \leq \dfrac{M'L}{2\sigma_\ell}\norm{s_k}^2 .
  \end{align}
Combining (\ref{quadra-conv1}) with (\ref{quadra-conv2}), we finally obtain
   \begin{align*}
     \norm{x^*-x_k}=\norm{s^*_k-s_k}
     &\leq
       \norm{s^*_k-s^\mathrm{N}_k}+\norm{s^{\mathrm{N}}_k-s_k}\\
     &\leq \dfrac{M'L}{2}\left[\norm{s^*_k}^2+\dfrac{1}{\sigma_\ell}
       \norm{s_k}^2\right]\\
     & =\dfrac{M'L}{2}\left[1+\dfrac{1}{\sigma_\ell}
       \left(1+\dfrac{M'L}{2}\norm{s^*_k}\right)^2
       \right]\norm{x^*-x_{k-1}}^2 .
   \end{align*}

\end{proof}

\section{Concluding remarks} The proximal point method is a basic block
of several algorithms and splitting methods in optimization, such as
proximal-gradient methods, Gauss-Seidel alternating proximal
minimization, augmented Lagrangian methods. Among others, it has been
successfully applied to sparse optimization in signal/image, machine
learning, inverse problems in physics, domain decomposition for PDE'S...
In these situations, we are faced with problems of high dimension, and
this is a crucial issue to develop fast methods. In this paper, we have
laid the theoretical foundations for a new fast proximal method. It is
based on a large step condition. For convex minimization problems, its
complexity is $\bigo(\frac{1}{n^2})$, and global quadratic convergence holds in the regular case for the associated proximal-Newton method. It can be considered as a
discrete version of a regularized Newton continuous dynamical system.
Many interesting theoretical points still remain to be investigated, such as obtaining fast convergence results for maximal monotone operators which are not subdifferentials, the combination of the method
with classical proximal based algorithms, and duality methods, as mentioned above. The
implementation of the method on concrete examples is a subject for
further research.

\appendix

\section{Appendix}
\subsection{A discrete differential inequality}
\begin{lemma}
  \label{lm:tch2}
	Let $\{a_k\}$ be a sequence of non-negative real numbers and let $\tau\geq 0$
	be such that $\tau\sqrt{a_0}\leq 1$.
	If $a_k \leq a_{k-1}-\tau a_{k-1}^{3/2}$ for all $k\geq 1$, then
	\begin{align*}
    a_k\leq\dfrac{a_0}{ \left[1+k\tau \sqrt{a_0}/2\right]^2}.
  \end{align*}
 \end{lemma}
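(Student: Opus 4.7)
The plan is to reduce the inequality to a standard telescoping argument by the substitution $u_k := 1/\sqrt{a_k}$, which converts the $a_{k-1}^{3/2}$ term into a roughly constant increment. The trivial case $a_0=0$ forces $a_k\equiv 0$, and a vanishing $a_{k-1}$ forces $a_k=0$ from then on, so I would first dispose of these cases and assume $a_{k-1}>0$ throughout the induction.

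For the main step, I would rewrite the hypothesis as
\[
a_k\leq a_{k-1}\bigl(1-\tau\sqrt{a_{k-1}}\bigr).
\]
Since $\{a_k\}$ is non-increasing (the hypothesis plus non-negativity of the cube-root term), the factor $1-\tau\sqrt{a_{k-1}}\in[0,1]$ thanks to $\tau\sqrt{a_0}\leq1$. Taking reciprocals of square roots gives
\[
\frac{1}{\sqrt{a_k}}\;\geq\;\frac{1}{\sqrt{a_{k-1}}}\cdot\frac{1}{\sqrt{1-\tau\sqrt{a_{k-1}}}}.
\]
The key elementary estimate is $(1-x)^{-1/2}\geq 1+x/2$ for $x\in[0,1)$, which I would verify by squaring: it is equivalent to $1\geq(1-x)(1+x/2)^2$, and expanding gives $1-3x^2/4-x^3/4\leq 1$, which is immediate. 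Applying this with $x=\tau\sqrt{a_{k-1}}$ yields
\[
\frac{1}{\sqrt{a_k}}\;\geq\;\frac{1}{\sqrt{a_{k-1}}}+\frac{\tau}{2}.
\]

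A straightforward induction on $k$ then gives $1/\sqrt{a_k}\geq 1/\sqrt{a_0}+k\tau/2$, which, upon inversion and squaring, is exactly the desired bound
\[
a_k\;\leq\;\frac{a_0}{\bigl(1+k\tau\sqrt{a_0}/2\bigr)^{2}}.
\]
The only subtlety, and the step I would check most carefully, is the elementary inequality $(1-x)^{-1/2}\geq 1+x/2$ together with the verification that $\tau\sqrt{a_{k-1}}\leq\tau\sqrt{a_0}\leq 1$ along the whole sequence, so the substitution and the bound are legitimate at every iteration. Everything else is routine.
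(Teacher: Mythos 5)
Your proof is correct and follows essentially the same route as the paper: both substitute $u_k = 1/\sqrt{a_k}$, establish the telescoping inequality $u_k \geq u_{k-1} + \tau/2$, and sum. The only cosmetic difference is that the paper justifies that step by appealing to the convexity of $t\mapsto t^{-1/2}$ (a one-line gradient inequality), while you factor out $a_{k-1}$ and verify the scalar bound $(1-x)^{-1/2}\geq 1+x/2$ by squaring — which is the same convexity fact in concrete form.
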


\begin{proof}
  Since $\{a_k\}$ is non-increasing, it follows that $a_k=0$ implies
  $a_{k+1}=a_{k+2}=\cdots=0$ and, consequently, the desired inequality holds for all $k'\geq
  k$. Assume now that $a_k>0$ for some $k\geq 1$.
	Using the assumptions on $\{a_k\}$
	we find the following inequality:
	\begin{align*}
    \dfrac{1}{a_j}\geq \dfrac{1}{a_{j-1}-\tau a_{j-1}^{3/2}}>0\quad \forall j\leq k.
  \end{align*}
	Taking the square root on both sides of latter inequality and using the convexity
  of the scalar function $t\mapsto 1/\sqrt{t}$ we conclude that
	\begin{align*}
    \dfrac{1}{\sqrt{a_j}}\geq \dfrac{1}{\sqrt{a_{j-1}-\tau a_{j-1}^{3/2}}}\geq
    \dfrac{1}{\sqrt{a_{j-1}}}+\dfrac{1}{2a_{j-1}^{3/2}}\tau a_{j-1}^{3/2}
    = \dfrac{1}{\sqrt{a_{j-1}}}+\dfrac{\tau}{2}\quad \forall j\leq k.
  \end{align*}
	Adding the above inequality for $j=1,2,\dots,k$ we obtain
  \[
    \dfrac{1}{\sqrt{a_k}}\geq \dfrac{1}{\sqrt{a_0}}+k\tau /2,
  \]
	which in turn gives the desired result.
  \end{proof}

\subsection{Some examples}
\label{sec:ex}

Consider some simple examples where we can explicitly compute the solution
$(x, \lambda)$ of the algebraic-differential system (\ref{eq:edomm}), and verify that this is effectively a well-posed system.

\paragraph{Isotropic linear monotone operator} Let us start with the following simple situation. Given $\alpha >0$ a positive constant, take ${A} =\alpha I$, i.e.,  for every $x \in {\mathcal H}$
$
{A}x = \alpha x.
$
One  obtains
\begin{align}\label{eq:ex1}
&(\lambda {A}+I)^{-1}x = \frac{1}{1 + \lambda \alpha}x\\
& x- (\lambda {A}+I)^{-1}x = \frac{\lambda \alpha}{1 + \lambda \alpha}x.
\end{align}
Given $x_0 \neq 0$, the algebraic-differential system (\ref{eq:edomm}) can be written as follows
\begin{align}
  &\dot x (t) + \frac{\alpha\lambda (t)}{1 + \alpha \lambda (t)}x(t) =0,\qquad
  \lambda (t)>0,\qquad	\label{eq:ex2}\\
  &\frac{{\alpha\lambda (t)}^2}{1 + \alpha\lambda (t)}	\norm{x(t)}=\theta, \label{eq:ex21}\\
  &x(0)=x_0 \label{eq:ex23}.
\end{align}
Let us integrate the linear differential equation (\ref{eq:ex2}).  Set
\begin{equation}
\Delta (t) :=	\int_0^t    \frac{\alpha\lambda (\tau)}{1 + \alpha\lambda (\tau)} d\tau.
\end{equation}
We have
\begin{equation}
x(t)=	e^{-\Delta (t)} x_0.
\end{equation}
Equation (\ref{eq:ex21}) becomes
\begin{equation}
\frac{{\alpha\lambda (t)}^2}{1 +\alpha \lambda (t)}  e^{-\Delta (t)}	= \frac{\theta}{\norm{x_0}}.
\end{equation}
First, check  this equation  at time $t=0$.  Equivalently
\begin{equation}
\frac{{\alpha\lambda (0)}^2}{1 + \alpha\lambda (0)}   = \frac{\theta}{\norm{x_0}}.
\end{equation}
This equation  defines uniquely $\lambda (0)>0$, because the function $\xi \mapsto \frac{\alpha{\xi}^2}{1 + \alpha \xi}$ is strictly increasing from $[0, +\infty[$ onto $[0, +\infty[$.
Thus, the only thing we have to prove is the existence of a positive function $t \mapsto \lambda (t)$ such that
\begin{equation}
h(t):=	\frac{\alpha{\lambda (t)}^2}{1 + \alpha\lambda (t)}  e^{-\Delta (t)}   \quad \mbox{is constant on} \ [0, +\infty[.
\end{equation}
Writing that the derivative $h'$ is identically zero on $[0, +\infty[$, we obtain that $\lambda(\cdot)$ must satisfy 
\begin{equation}
\lambda'(t)(\alpha\lambda (t)+2) - \alpha{\lambda (t)}^2=0.
\end{equation}
After integration of this first-order differential equation, with Cauchy data $\lambda (0)$,  we obtain
\begin{equation}\label{eq:ex27}
\alpha \ln \lambda (t) - \frac{2}{\lambda (t)} = \alpha t + \alpha \ln \lambda (0) - \frac{2}{\lambda (0)}.
\end{equation}
Let us introduce the function $g: \ ]0, +\infty[ \rightarrow \mathbb R$
\begin{equation}
g(\xi) =  \alpha \ln \xi - \frac{2}{\xi}.
\end{equation}
One can easily verify that, as $t$ increases from $0$ to $+\infty$,  $g(t)$ is strictly increasing from $-\infty$ to $+\infty$ .
Thus, for each $t>0$, (\ref{eq:ex27}) has a unique solution $\lambda(t) >0$.  Moreover,  the mapping $t \to \lambda(t)$ is increasing, continuously differentiable,
and  $\lim_{t \to \infty}\lambda(t) = + \infty$.
Returning to (\ref{eq:ex27}), we obtain that $\lambda(t) \approx e^{t}$ as $t \to +\infty$.

\poubelle{
\paragraph{Nonisotropic linear monotone operator}
Suppose  ${\mathcal H}$ is a separable Hilbert space, and the operator ${A}$ admits a orthonormal basis of eigenvectors. Without limitation, we can assume that
${H} = l^2 (\mathbb N)$, and there exists a sequence of real numbers $\alpha_n \geq 0$, such that for each $x= (x_n)_n \in {\mathcal H}$
$$
{A} x = (\alpha_n x_n)_n.
$$
Assume for simplicity that the sequence $(\alpha_n )_n$ is bounded. Then the operator ${A}$ is linear continuous, monotone, and hence maximal monotone.
A similar computation as above gives
\begin{align}\label{eq:ex4}
&(\lambda {A}+I)^{-1}x = \Big(\frac{1}{1 + \alpha_n \lambda}x_n \Big)_n\\
& x- (\lambda {A}+I)^{-1}x = \Big(\frac{\alpha_n \lambda}{1 + \alpha_n \lambda}x_n \Big)_n.
\end{align}
Given $x_0$ such that ${A}x_0 \neq 0$, the algebraic-differential system \ref{eq:edomm} can be written as follows
\begin{align}
  &{\dot{x}}_n (t) + \frac{\alpha_n\lambda (t)}{1 + \alpha_n \lambda (t)}x_n(t) =0,\qquad
  \lambda (t)>0,\qquad	\label{eq:ex40}\\
  &  \norm{ \Big(\frac{\alpha_n\lambda (t)^2}{1 + \alpha_n \lambda (t)}x_n(t)\Big)_n}=\theta, \label{eq:ex41}\\
  &x_n(0)=x_{0n} \label{eq:ex42}.
\end{align}
Let us integrate the linear differential equation \ref{eq:ex40}: set
\begin{equation}
{\Delta}_n (t) :=   \int_0^t	\frac{\alpha_n\lambda (\tau)}{1 + \alpha_n\lambda (\tau)} d\tau.
\end{equation}
We have
\begin{equation}
x(t)=  \Big( e^{-\Delta_n (t)} x_{0n}\Big).
\end{equation}
Equation \ref{eq:ex41} becomes
\begin{equation}
\sum_n	\frac{{\alpha_n}^2{\lambda (t)}^4}{(1 +\alpha_n \lambda (t))^2}  e^{-2\Delta_n (t)} |x_{0n}|^2	 = {\theta}^2.
\end{equation}
First, check  this equation  at time $t=0$.  Equivalently
\begin{equation}
\sum_n	\frac{{\alpha_n}^2{\lambda (0)}^4}{(1 +\alpha_n \lambda (0))^2} |x_{0n}|^2   = {\theta}^2.
\end{equation}
One can readily verify that the function
\begin{equation}
g(\lambda):= \sum_n  \frac{{\alpha_n}^2{\lambda }^4}{(1 +\alpha_n \lambda )^2} |x_{0n}|^2
\end{equation}
is strictly increasing (each of the constitutive functions of the above sum is increasing), and satisfies $g(0)=0$.
Moreover by selecting some $n_0$ such that ${\alpha_{n_0}} x_{0n} \neq 0$ (this is possible because $x_0$ is not in the kernel of the operator ${A}$), we have
\begin{equation}
g(\lambda) \geq   \frac{{\alpha_{n_0}}^2{\lambda }^4}{(1 +{\alpha_{n_0}} \lambda )^2} |x_{0n}|^2 .
\end{equation}
The right member of this inequality tends to $+\infty$ as $t$ goes to $+\infty$. Hence there exists a unique $\lambda (0)$ such that $g(\lambda (0)) = {\theta}^2$.
Thus, the only thing we have to prove is the existence of a positive function $t \mapsto \lambda (t)$ such that
\begin{equation}
h(t):=	\sum_n	\frac{{\alpha_n}^2{\lambda (t)}^4}{(1 +\alpha_n \lambda (t))^2}  e^{-2\Delta_n (t)} |x_{0n}|^2
  \quad \mbox{is constant on} \ [0, +\infty[.
\end{equation}
After derivation
\begin{equation}
h'(t):=  \sum_n \frac{{2\alpha_n}^2{\lambda (t)}^3 e^{-2\Delta_n (t)}|x_{0n}|^2 }{(1 +\alpha_n \lambda (t))^3}	\Big[2\lambda'(t) + \alpha_n \lambda (t)\lambda'(t) - \alpha_n \lambda^2 (t) \Big].
\end{equation}
Solving explicitly this differential equation is difficult.
Just notice a simple case where we can easily conclude. If the initial data is an eigenvector of the operator ${A}$, one can equivalently take $x_0= e_{n_0}$, where  $e_{n_0}$ is a vector of the canonical base of ${\mathcal H} = l^2 (\mathbb N)$. In this case the  equation $h't)=0$ is equivalent to
$$
2\lambda'(t) + \alpha_{n_0} \lambda (t)\lambda'(t) - \alpha_{n_0} \lambda^2 (t) = 0
$$
and one concludes as in the previous (isotropic) case.
}

\paragraph{Antisymmetric linear monotone operator}
Take ${\mathcal H} = {\mathbb R}^2$ and ${A}$ equal to the rotation centered at the origin and angle  $\frac{\pi}{2}$.
The operator ${A}$ satisfies ${A}^{*} = -{A}$ (anti self-adjoint). This is a model example of a linear maximal monotone operator which is not self-adjoint.
Set $x= (\xi, \eta) \in {\mathbb R}^2$. We have
$$
{A}(\xi, \eta) = (-\eta, \xi).
$$
\begin{align}\label{eq:ex5}
&(\lambda {A}+I)^{-1}x = \frac{1}{1 + {\lambda}^2 }\Big( \xi + \lambda	\eta, \eta - \lambda \xi \Big)	\\
& x- (\lambda {A}+I)^{-1}x = \frac{\lambda}{1 + {\lambda}^2 }\Big( \lambda\xi - \eta, \lambda \eta + \xi \Big).
\end{align}
The condition $\lambda \norm{(\lambda {A}+I)^{-1}x-x}=\theta$ can be reexpressed as
$$
\frac{\lambda^2}{1 + {\lambda}^2 }\norm{  \Big( \lambda\xi - \eta, \lambda \eta + \xi \Big)}=  \theta.
$$
Equivalently
$$
\frac{\lambda^2}{\sqrt{1 + {\lambda}^2} } \sqrt{ \xi^2 + \eta^2  }=  \theta.
$$
Given $x_0 \neq 0$, the algebraic-differential system (\ref{eq:edomm}) can be written as follows
\begin{align}
  &\dot {\xi} (t) + \frac{\lambda(t)}{1 + {\lambda(t)}^2 }\Big( \lambda (t) \xi (t) - \eta (t) \Big) =0,\qquad
  \lambda (t)>0,\qquad	\label{eq:ex50}\\
  &\dot {\eta} (t) + \frac{\lambda(t)}{1 + {\lambda(t)}^2 }\Big( \lambda (t) \eta (t) + \xi(t) \Big) =0,\qquad
  \lambda (t)>0,\qquad	\label{eq:ex51}\\
  &\frac{\lambda(t)^2}{\sqrt{1 + {\lambda(t)}^2} } \sqrt{ \xi (t)^2 + \eta (t) ^2  }=  \theta, \label{eq:ex52}\\
  &x(0)=x_0 \label{eq:ex53}.
\end{align}
Set $u(t) = \xi (t)^2 + \eta (t) ^2$.
After multiplying  (\ref{eq:ex50}) by $\xi(t)$, and multiplying  (\ref{eq:ex51}) by $\eta(t)$, then adding  the results, we obtain
$$
u'(t) +  \frac{2 {\lambda(t)}^2}{1 + {\lambda(t)}^2}u(t) =0.
$$
Set
\begin{equation}
\Delta (t) :=	\int_0^t    \frac{2 {\lambda(\tau)}^2}{1 + {\lambda(\tau)}^2} d\tau.
\end{equation}
We have
\begin{equation}
u(t)=	e^{-\Delta (t)} u(0).
\end{equation}
Equation (\ref{eq:ex52}) becomes
\begin{equation}
\frac{\lambda(t)^2}{\sqrt{1 + {\lambda(t)}^2} }  e^{-\frac{\Delta (t)}{2}} = \frac{\theta}{\norm{x_0}}.
\end{equation}
First, check  this equation  at time $t=0$.  Equivalently
\begin{equation}
\frac{\lambda(0)^2}{\sqrt{1 + {\lambda(0)}^2} }   = \frac{\theta}{\norm{x_0}}.
\end{equation}
This equation  defines uniquely $\lambda (0)>0$, because the function $\rho \mapsto \frac{{\rho}^2}{\sqrt{1 + {\rho}^2} }$ is strictly increasing from $[0, +\infty[$ onto $[0, +\infty[$.
Thus, the only thing we have to prove is the existence of a positive function $t \mapsto \lambda (t)$ such that
\begin{equation}
h(t):=	\frac{\lambda(t)^2}{\sqrt{1 + {\lambda(t)}^2} }  e^{-\frac{\Delta (t)}{2}}   \quad \mbox{is constant on} \ [0, +\infty[.
\end{equation}
Writing that the derivative $h'$ is identically zero on $[0, +\infty[$, we obtain that $\lambda(\cdot)$ must satisfy
\begin{equation}
\lambda'(t)(2\lambda (t)+ {\lambda(t)}^3) - {\lambda (t)}^3=0.
\end{equation}
After integration of this first-order differential equation, with Cauchy data $\lambda (0)$,  we obtain
\begin{equation}\label{eq:ex55}
\lambda (t) - \frac{2}{\lambda (t)} =  t +  \lambda (0) - \frac{2}{\lambda (0)}.
\end{equation}
Let us introduce the function $g: \ ]0, +\infty[ \rightarrow \mathbb R$
\begin{equation}
g(\rho) =   \rho - \frac{2}{\rho}.
\end{equation}
As $t$ increases from $0$ to $+\infty$,  $g(t)$ is strictly increasing from $-\infty$ to $+\infty$ .
Thus, for each $t>0$, (\ref{eq:ex55}) has a unique solution $\lambda(t) >0$.  Moreover,  the mapping $t \to \lambda(t)$ is increasing, continuously differentiable,
and  $\lim_{t \to \infty}\lambda(t) = + \infty$.
Returning to (\ref{eq:ex55}), we obtain that $\lambda(t) \approx t$ as $t \to +\infty$.

 \if {
\paragraph{Subdifferential of the indicator function of a closed convex set}
Take ${A}= \partial \delta_C$  be equal to the subdifferential $\delta_C$ of the indicator function of a closed convex set $C \subset {\mathcal H}$.
Equivalently ${A}$ is the normal cone mapping to $C$.
Classically
$$
(\lambda {A}+I)^{-1}x = \mbox{proj}_C x.
$$
Note that, in this very special situation, the resolvent operator is independent of $\lambda$.
Given $x_0 \notin C$, the algebraic-differential system \eqref{eq:edomm} can be written as follows
\begin{align}
  &\dot x (t) + x(t)- \mbox{proj}_C x (t) =0,	    \label{eq:ex60}\\
  &\lambda (t)	\norm{ x(t) - \mbox{proj}_C x (t) }=\theta, \label{eq:ex61}\\
  &x(0)=x_0 \label{eq:ex62}.
\end{align}
The operator $Ax= x -\mbox{proj}_C x$ is maximal monotone cocoercive (the identity minus a contraction), hence demipositive.
Thus, \eqref{eq:ex60} is relevant of the theory of semigroups generated by maximal monotone demipositive  operators, see \cite{Bruck}.
Hence there exists a unique orbit $x$ which satisfies \eqref{eq:ex60} and \eqref{eq:ex62}. Moreover $x$ satisfies:
\begin{align}
&\int_0^{\infty}  \| \dot x (t)\|^2 dt < + \infty \\
&t \mapsto \|\dot x (t)\|  \  \  \mbox{is decreasing}
\end{align}
Hence, $\|\dot x (t)\|$ (strictly) decreases to zero. By \eqref{eq:ex60}, this implies that $\| x(t)- \mbox{proj}_C x (t)\|$ (strictly) decreases to zero. Finally
\eqref{eq:ex61} is satisfied by taking
$$
\lambda (t) = \frac{\theta}{\| x(t)- \mbox{proj}_C x (t)\|}.
$$
Clearly, $\lambda (t)$ is uniquely defined, and tends increasingly to $+\infty$ as $t$ goes to infinity.
}
\fi






\end{document}